\newtheorem{theorem}{Theorem}[section]
\newtheorem{lemma}[theorem]{Lemma}
\newtheorem{proposition}[theorem]{Proposition}
\newtheorem{corollary}[theorem]{Corollary}
\newtheorem{conjecture}[theorem]{Conjecture}
\theoremstyle{definition}
\newtheorem*{ack}{Acknowledgements}
\newtheorem*{con}{Conventions}
\newtheorem{remark}[theorem]{Remark}
\newtheorem{example}[theorem]{Example}
\newtheorem{definition}[theorem]{Definition}
\numberwithin{equation}{section} \numberwithin{figure}{section}
 \DeclareMathOperator{\NS}{NS}
\DeclareMathOperator{\Aut}{Aut}
\DeclareMathOperator{\Spec}{Spec}
\DeclareMathOperator{\an}{an}
\DeclareMathOperator{\Hom}{Hom}
\newcommand{\Qbar}{\overline{\QQ}}
\newcommand\PP{\mathbb{P}}
\newcommand\ZZ{\mathbb{Z}}
\newcommand\QQ{\mathbb{Q}}
\newcommand\CC{\mathbb{C}}
\title[Demailly's notion of algebraic hyperbolicity]{Demailly's notion of algebraic hyperbolicity: Geometricity, boundedness, moduli of maps}
\author{Ariyan Javanpeykar}
\address{Ariyan Javanpeykar \\
Institut f\"{u}r Mathematik\\
Johannes Gutenberg-Universit\"{a}t Mainz\\
Staudingerweg 9, 55099 Mainz\\
Germany.}
\email{peykar@uni-mainz.de}
\author{Ljudmila Kamenova}
\address{Ljudmila Kamenova \\ 
Department of Mathematics \\
Stony Brook University \\ 
Stony Brook, NY 11794-3651 \\ 
USA.}
\email{kamenova@math.stonybrook.edu}
\subjclass[2010]
{32Q45 
(37P45, 
 14J40, 
14J50)} 
\keywords{Hyperbolicity, moduli of maps, boundedness, Hom-schemes}
\begin{document}

\begin{abstract}
Demailly's conjecture, which is a consequence of  the  Green--Griffiths--Lang conjecture on varieties of general type, states that an algebraically hyperbolic complex projective variety is Kobayashi hyperbolic. Our aim is to provide evidence for Demailly's conjecture by verifying several predictions it makes. We first define what an algebraically hyperbolic projective variety is, extending Demailly's definition to (not necessarily smooth) projective varieties over an arbitrary algebraically closed field of characteristic zero, and we prove that this property is stable under extensions of algebraically closed fields. Furthermore, we show that the set of (not necessarily surjective) morphisms from a   projective variety $Y$  to a projective algebraically hyperbolic variety $X$ that map a  fixed closed subvariety of $Y$ onto a fixed   closed subvariety of $X$ is finite. As an application, we obtain that  $\Aut(X)$ is finite and that every surjective endomorphism of $X$ is an automorphism. Finally, we explore ``weaker'' notions of hyperbolicity related to boundedness of moduli spaces of maps, and verify similar predictions made by the Green--Griffiths--Lang conjecture on hyperbolic projective varieties.
\end{abstract}

\maketitle

\thispagestyle{empty}
 
\section{Introduction} 
The aim of this paper is to provide evidence for Demailly's conjecture which says that a projective algebraically hyperbolic variety over $\CC$ is Kobayashi hyperbolic.  

  We first define the notion of an algebraically hyperbolic projective scheme over an algebraically closed field  $k$ of characteristic zero which is not assumed to be $\mathbb{C}$, and  could be $\Qbar$, for example. Then we provide indirect evidence for Demailly's conjecture by showing that   algebraically hyperbolic schemes share many common features with    Kobayashi hyperbolic complex manifolds. 
  Furthermore, we also investigate ``weaker'' variants of algebraic hyperbolicity, and prove similar properties. Applications of our work are given in \cite{vBJK, Jaut,  JXie} (see \cite{JBook} for a survey).

\begin{definition} \label{def:alg_hyp}
A projective scheme $X$ over $k$ is \emph{algebraically hyperbolic over $k$}  if there is an ample line bundle $\mathcal{L}$, a positive real number $\alpha$, and a positive real number $\beta $ such that, for every smooth projective connected curve $C$ over $k$ and every $k$-morphism $f\colon C\to X$ we have that \[
\deg_C f^\ast \mathcal{L} \leq \alpha \cdot (2~ \mathrm{genus}(C) -2) + \beta = 
- \alpha \cdot \chi(C) +\beta.\]
\end{definition}

In \cite{Demailly} Demailly defines this notion for \emph{smooth} projective schemes over $\CC$ (and more generally, for compact complex manifolds and for projective directed manifolds). Note that the above definition   makes sense for 
(not necessarily smooth) projective schemes over $k$, and therefore extends Demailly's notion of algebraic hyperbolicity to singular varieties.

   If $f:C\to X$ is a non-constant morphism and $X$ is algebraically hyperbolic over $k$, then the genus of $C$ is greater than or equal to two, so that $-\alpha \cdot\chi(C) +\beta \leq (-\alpha -\beta) \cdot \chi(C)$.
We mention that $X$ is algebraically hyperbolic over $k$  if  and only if there is an integer  $g_0\geq 2$ and a positive real number $\alpha$ such that, for every smooth projective connected curve $C$ over $k$ of genus at least $g_0$ and every    morphism $f:C\to X$, the inequality $\deg_C f^\ast \mathcal{L} \leq \alpha \cdot \mathrm{genus}(C)$ holds.  Indeed, every curve $C$ of genus at least two has a finite \'etale covering $C'\to C$ with $C'$ a curve of genus at least $g_0$, and the degree of a morphism $C\to X$ is equal to the degree of the composition $C'\to C\to X$ divided by the degree of $C'\to C$.
 
Examples of algebraically hyperbolic projective varieties are given in \cite{Brotbek1, Brotbek2, Deb, Div, DivFer, Mourougane, RoulleauRousseau, Rousseau1}. Also, a logarithmic analogue of algebraic hyperbolicity (for quasi-projective varieties) was introduced and studied in \cite{ChenXi}.
 
 A finite type scheme $X$ over $\CC$ is Kobayashi hyperbolic if Kobayashi's pseudometric on the reduced complex analytic  space $X_{\mathrm{red}}^{\an}$ is a metric; see \cite{Kobayashi}.
The relation between algebraic hyperbolicity and  Kobayashi hyperbolicity   is provided by the following theorem of Demailly.  

\begin{theorem}[Demailly] \label{thm:dem1}  
If $X$ is a Kobayashi hyperbolic projective  scheme over $\CC$,  then $X$ is algebraically hyperbolic over $\CC$.
\end{theorem}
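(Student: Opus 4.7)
The plan is to follow Demailly's original strategy via Brody's reparametrization lemma and the Ahlfors--Schwarz estimate, reducing the statement for a possibly singular $X$ to the case of non-constant maps from smooth projective curves of genus at least two.

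First I would perform a reduction. Every morphism from a reduced scheme to $X$ factors through $X_{\mathrm{red}}$, and Kobayashi hyperbolicity of $X$ is by definition Kobayashi hyperbolicity of $X_{\mathrm{red}}^{\an}$, so I may assume $X$ is reduced. For a non-constant morphism $f\colon C\to X$ from a smooth projective connected curve, let $\nu\colon \tilde{Y}\to Y:=f(C)\subset X$ be the normalization of the image and write $f=\nu\circ \sigma$ for the induced surjection $\sigma\colon C\to \tilde{Y}$. Since $\tilde{Y}$ is a smooth projective curve admitting a non-constant holomorphic map to the Brody hyperbolic space $X^{\an}$, one has $g(\tilde{Y})\geq 2$; by Riemann--Hurwitz, $\deg(\sigma)\bigl(2g(\tilde{Y})-2\bigr)\leq 2g(C)-2$. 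Hence it is enough to exhibit a constant $\alpha>0$, depending only on $(X,\mathcal{L})$, such that $\deg_{\tilde{Y}} h^\ast\mathcal{L}\leq \alpha\bigl(2g(\tilde{Y})-2\bigr)$ for every non-constant $h\colon\tilde{Y}\to X$ with $\tilde{Y}$ a smooth projective curve of genus at least two.

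Next, I would fix an embedding $X\hookrightarrow \PP^N$ induced by $\mathcal{L}^{\otimes m}$ for some $m\geq 1$ and let $\omega$ be $\tfrac{1}{m}$ times the pullback of the Fubini--Study form. This $\omega$ is a closed smooth semipositive $(1,1)$-form on $X^{\an}$ representing $c_1(\mathcal{L})$, and $\int_{\tilde{Y}} h^\ast\omega = \deg_{\tilde{Y}} h^\ast\mathcal{L}$ for every $h\colon\tilde{Y}\to X$. Brody's reparametrization lemma, applied to the compact Kobayashi hyperbolic analytic space $X^{\an}$, produces a constant $K>0$ such that every holomorphic map $\phi\colon \Delta\to X^{\an}$ from the open unit disk satisfies $\phi^\ast\omega \leq K\,\omega_\Delta$ pointwise, where $\omega_\Delta$ is the Poincaré form on $\Delta$ of constant curvature $-1$: failure of this bound would, after Brody reparametrization, yield a non-constant entire curve $\CC\to X^{\an}$, contradicting Brody (and hence Kobayashi) hyperbolicity.

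Finally, for a smooth projective curve $\tilde{Y}$ of genus $g\geq 2$ equipped with its Poincaré form $\omega_{\tilde{Y}}$ of constant curvature $-1$ (so $\int_{\tilde{Y}}\omega_{\tilde{Y}}=2\pi(2g-2)$ by Gauss--Bonnet), and any morphism $h\colon\tilde{Y}\to X$, the universal covering $\pi\colon \Delta\to \tilde{Y}$ is a local isometry between $(\Delta,\omega_\Delta)$ and $(\tilde{Y},\omega_{\tilde{Y}})$. Applying the previous bound to $h\circ \pi$ pointwise gives $\pi^\ast(h^\ast\omega)\leq K\,\pi^\ast\omega_{\tilde{Y}}$, which descends to $h^\ast\omega\leq K\,\omega_{\tilde{Y}}$ on $\tilde{Y}$. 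Integrating yields $\deg_{\tilde{Y}} h^\ast\mathcal{L}\leq 2\pi K\,(2g-2)$, so one can take $\alpha:=2\pi K$ and any $\beta>2\alpha$ in the definition (the cases of genus $0$ and $1$ being vacuous by Brody hyperbolicity). The main technical point is the Brody-type bound on $\phi^\ast\omega$ over the possibly singular complex space $X^{\an}$; I would establish it by working with the extrinsic Hermitian form pulled back from $\PP^N$, thereby sidestepping the subtleties of the Kobayashi--Royden infinitesimal pseudometric at singular points.
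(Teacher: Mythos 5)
Your argument is correct and is essentially Demailly's own proof of \cite[Theorem~2.1]{Demailly}, which is exactly what the paper relies on here (the paper gives no independent proof, only the remark that smoothness is never used): the Brody-type bound $\phi^\ast\omega\leq K\,\omega_\Delta$ is the standard uniform lower bound on the Kobayashi--Royden infinitesimal metric of a compact hyperbolic space, and the descent through the universal cover plus Gauss--Bonnet is the distance-decreasing step. The only cosmetic point is that the reduction through the normalization of the image is unnecessary, since $C$ itself already has genus at least two whenever $f$ is non-constant; your handling of the singular case by pulling back the extrinsic Fubini--Study form is the right way to make Brody's lemma apply.
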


In \cite[Theorem~2.1]{Demailly}   Demailly shows that a Kobayashi hyperbolic \emph{smooth} projective variety over $\CC$ is algebraically hyperbolic (see also 
\cite[Theorem~2.13]{BKV}).   The smoothness assumption is however not used in Demailly's proof. 

Recall that a variety $X$ over $\CC$ is Brody hyperbolic if every holomorphic map $\mathbb{C}\to X^{\an}$ is constant, where $X^{\an}$ is the complex analytic space associated to $X$ \cite[Expos\'e~XII]{SGA1}.  Since Brody hyperbolic     projective varieties are Kobayashi hyperbolic  \cite{Kobayashi}, we see that Brody hyperbolic projective varieties over $\CC$ are algebraically hyperbolic over $\CC$. Similarly, as Borel hyperbolic projective varieties over $\CC$ (as defined in \cite{JKuch})  are Brody hyperbolic, it follows that they are also algebraically hyperbolic over $\CC$. In particular, roughly speaking, every ``complex-analytically'' hyperbolic variety is algebraically hyperbolic.
 
One can show that a projective Kobayashi hyperbolic variety $X$ over $\CC$ is \emph{groupless} (Definition \ref{defn:groupless}), i.e., for every connected complex algebraic group $G$, every morphism of varieties $G\to X$ is constant.  
In \cite[page 160]{Lang} Lang conjectured the converse, i.e.,   a groupless projective variety $X$ over $\CC$ is Kobayashi hyperbolic.
 
 Lang's aforementioned conjecture  is a variant of a similar conjecture of Green--Griffiths \cite{GrGr}. Indeed, Green and Griffiths conjectured that, if $X$ is a projective variety of general type over $\CC$, then there are no entire curves $\CC\to X^{\an}$ with Zariski dense image. Consequently, combining the conjectures of Lang and Green--Griffiths, we are led to the following conjecture (which we will refer to as the Green--Griffiths--Lang conjecture).
 
 \begin{conjecture}[Green--Griffiths--Lang]
 Let $X$ be a projective variety over $\CC$. Then the following are equivalent.
 \begin{enumerate}
 \item The projective variety $X$ is groupless over $\CC$.
 \item The complex analytic space $X^{\an}$ is Kobayashi hyperbolic.
 \item Every closed subvariety of $X$ is of general type.
 \end{enumerate}
 \end{conjecture}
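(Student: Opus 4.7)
The final statement is the Green--Griffiths--Lang conjecture, which is a major open problem rather than a theorem with a known proof; accordingly this note outlines which implications are currently established, which direction seems most amenable to attack, and where the central obstruction lies.

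The only unconditional implication is $(2) \Rightarrow (1)$: the Kobayashi pseudometric vanishes identically on the analytification of any connected complex algebraic group $G$ (since $G^{\an}$ is dominated by holomorphic maps out of $\CC^n$ via universal covers of abelian quotients and via exponential maps out of unipotent one-parameter subgroups inside the linear part), and hence the Kobayashi distance-decreasing property forces every morphism $G \to X$ into a Kobayashi hyperbolic target to be constant. The remaining implications are precisely the content of Lang's conjecture (for $(1) \Rightarrow (2)$) and of the Green--Griffiths conjecture (for the passage to and from $(3)$); no self-contained argument for any of them is known in full generality.

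To attempt $(1) \Rightarrow (2)$ by contradiction one would fix a non-constant entire curve $f \colon \CC \to X^{\an}$, apply the Brody reparametrization lemma to reduce to the bounded-derivative case, let $Y \subseteq X$ be the Zariski closure of the image, and try to produce a non-constant morphism from a positive-dimensional connected algebraic group to $Y$, contradicting grouplessness. An induction on $\dim Y$ reduces to the situation where $Y$ is the Zariski closure of the entire curve itself. Inside a semi-abelian variety, Bloch's theorem and its refinements by Ochiai, Kawamata, Green, and Noguchi identify such a $Y$ with a translate of a semi-abelian subvariety, providing the required group morphism; the open step is the absence of any analogous structure theorem for the Zariski closure of an entire curve in an arbitrary projective variety.

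The main obstacle is therefore structural rather than technical: the only available methods either assume positivity (jet differentials, Green--Griffiths--Demailly bundles, which feed into $(3) \Rightarrow (2)$) or are restricted to the semi-abelian category (Nevanlinna theory, logarithmic Bloch--Ochiai), and extracting an algebraic group quotient from a purely analytic input appears to require a genuinely new fibration-type structure theory for entire curves in arbitrary projective varieties, essentially amounting to the conjecture itself. For this reason the statement is recorded in the paper as motivation, and the rest of the paper is devoted to verifying predictions it makes rather than to establishing it.
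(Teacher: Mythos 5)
The statement is recorded in the paper purely as a conjecture --- the paper offers no proof of any of the equivalences and uses it only as motivation --- and you correctly recognize this rather than manufacturing an argument. Your identification of $(2)\Rightarrow(1)$ as the one unconditional implication agrees with the paper's own remark that a projective Kobayashi hyperbolic variety over $\CC$ is groupless, so your assessment is consistent with how the paper treats the statement.
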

 
 We now explain the relation of algebraically hyperbolic varieties to the Green--Griffiths--Lang conjecture. In fact, we  follow (and simplify) a strategy  of Demailly to show that projective algebraically hyperbolic varieties are groupless (Corollary \ref{cor:alg_hyp_is_groupless}). Therefore, as Demailly notes \cite[p.~14]{Demailly}, the converse of the statement of Theorem \ref{thm:dem1} is in fact a consequence of the Green--Griffiths--Lang conjecture. In other words, the following conjecture is a consequence of the Green--Griffiths--Lang conjecture (and we will refer to it as Demailly's conjecture throughout this paper).

\begin{conjecture}[Demailly, consequence of Green--Griffiths--Lang conjecture]\label{conj:demailly}
If $X$ is an algebraically hyperbolic projective variety over $\CC$, then $X$ is Kobayashi hyperbolic.
\end{conjecture}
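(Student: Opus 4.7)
The statement to establish is that algebraic hyperbolicity implies Kobayashi hyperbolicity for a projective variety $X$ over $\CC$. Because $X$ is projective, Kobayashi and Brody hyperbolicity coincide by Brody's theorem, so the plan is to argue by contradiction: assume there exists a non-constant holomorphic map $f\colon \CC \to X^{\an}$, apply Brody's reparametrization lemma to replace $f$ by an entire curve with uniformly bounded derivative with respect to a Hermitian metric on $X^{\an}$ induced by the ample line bundle $\mathcal{L}$ from Definition \ref{def:alg_hyp}, and then attempt to manufacture from $f$ algebraic curves in $X$ that violate the algebraic hyperbolicity inequality.

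The crucial step would be to extract from the Brody curve $f$ a sequence of non-constant morphisms $f_n \colon C_n \to X$, with $C_n$ smooth projective connected over $\CC$, satisfying
\[
\deg_{C_n} f_n^\ast \mathcal{L} \;-\; \alpha\bigl(2\,\mathrm{genus}(C_n)-2\bigr) \longrightarrow \infty
\]
for every fixed $\alpha>0$; such a sequence violates the bound in Definition \ref{def:alg_hyp} for every choice of $\beta$ and produces the required contradiction. Two natural candidate constructions come to mind. The first is Nevanlinna-theoretic: produce algebraic maps from curves of controlled genus that interpolate $f$ on disks $\{|z|\leq r_n\}$ and compare their $\mathcal{L}$-degrees to the Nevanlinna characteristic $T_f(r_n)$, which grows without a genus-controlled linear bound. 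The second is a reduction step: pass to the Zariski closure $Z\subseteq X$ of $f(\CC)$, observe that $Z$ remains algebraically hyperbolic (one expects algebraic hyperbolicity to descend to closed subvarieties, a fact proved elsewhere in the paper), and then exploit conjectural structure of $Z$ --- in the spirit of the Ueno or Kawamata fibration theorems for varieties that admit an entire curve --- to construct the $C_n$ explicitly from, say, a non-constant map from an abelian subvariety of $\Alb(Z)$.

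The decisive obstacle, and the reason the statement is a conjecture rather than a theorem, is this analytic-to-algebraic passage. Converting the purely transcendental data of one entire curve into a family of algebraic curves whose degrees outgrow their genera in a controlled way is essentially what the Green--Griffiths--Lang conjecture would provide; no unconditional mechanism for doing so is presently known, and even the existence of a single non-constant algebraic map from a positive-genus curve into $Z$ is not available a priori. Accordingly, my plan reduces Demailly's conjecture to this deep open input and goes no further. The rest of the paper, rather than attempting a direct proof, corroborates the conjecture indirectly by verifying that algebraically hyperbolic projective varieties share the structural features expected of Kobayashi hyperbolic ones, including grouplessness, finiteness of $\Aut(X)$, and boundedness of $\Hom$-schemes.
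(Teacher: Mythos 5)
The statement you were asked about is a conjecture, not a theorem: the paper never proves it, and you correctly refrain from claiming a proof. Your assessment that the decisive analytic-to-algebraic passage is exactly the content of the Green--Griffiths--Lang conjecture, and that no unconditional mechanism is known, matches the paper's own position. There is therefore no gap to report in the usual sense; the only substantive comparison to make is in how each of you exhibits the statement as a \emph{consequence} of Green--Griffiths--Lang. The paper's route is cleaner and entirely algebraic: it proves unconditionally that an algebraically hyperbolic projective variety is groupless (Corollary \ref{cor:alg_hyp_is_groupless}), and then invokes the GGL equivalence between grouplessness and Kobayashi hyperbolicity, so no Brody reparametrization, Nevanlinna characteristic, or Albanese fibration argument is needed. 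Your sketch reaches the same endpoint but through heavier transcendental machinery whose key step (converting one entire curve into a sequence of algebraic curves whose degrees outgrow their genera) is, as you say, not available. One small point in your favor: your claim that algebraic hyperbolicity descends to closed subvarieties is indeed justified by the paper, since closed immersions are finite and Proposition \ref{prop:finite_morphisms}(1) handles finite morphisms.
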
 

 In the next section we present our main results. We emphasize that all of our results are in accordance with Conjecture \ref{conj:demailly} in the sense that they allow one to verify some of the predictions one can make   \emph{assuming} Demailly's conjecture (Conjecture \ref{conj:demailly}) holds.

\subsection{Properties of algebraically hyperbolic varieties}\label{section:main_results}

Our first result illustrates that algebraic hyperbolicity is a 
geometric property. The proof is contained in Theorem \ref{thm1}. 
 
 \begin{theorem}\label{thm:geometricity} Let $k\subset L$ be an extension of algebraically closed fields of characteristic zero.
 Let $X$ be a projective algebraically hyperbolic scheme over $k$.  Then  the projective scheme $X_L$ is algebraically hyperbolic over $L$.
 \end{theorem}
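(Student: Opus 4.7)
The plan is to use a standard spreading out argument to reduce morphisms from curves over $L$ to morphisms from curves over $k$, on which we can invoke the algebraic hyperbolicity hypothesis for $X/k$.

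Fix an ample line bundle $\mathcal{L}$ on $X$ and positive real numbers $\alpha,\beta$ witnessing the algebraic hyperbolicity of $X$ over $k$. I claim that the triple $(\mathcal{L}_L,\alpha,\beta)$ witnesses the algebraic hyperbolicity of $X_L$ over $L$. So let $C$ be a smooth projective connected curve over $L$ and $f\colon C\to X_L$ a morphism. First I would choose a finitely generated $k$-subalgebra $A\subset L$ and a smooth proper morphism $\mathcal{C}\to\Spec A$ of relative dimension one with geometrically connected fibers, together with an $A$-morphism $F\colon\mathcal{C}\to X_A := X\times_{\Spec k}\Spec A$, whose base change to the generic point $\Spec L\to\Spec A$ recovers $C$ and $f$. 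Such an $A$ exists by the usual spreading out results (EGA IV.8, using that $C$ is finitely presented over $L$ and that $f$ factors through the finitely presented $X_L$, and shrinking $\Spec A$ so that smoothness, properness, and geometric connectedness all persist).

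Next, since $k$ is algebraically closed and $A$ is a finitely generated $k$-algebra, the Nullstellensatz guarantees a closed point $s\in\Spec A$ whose residue field is $k$. The fiber $\mathcal{C}_s$ is then a smooth projective geometrically connected curve over $k$, and $F_s\colon\mathcal{C}_s\to X$ is a $k$-morphism. Applying the algebraic hyperbolicity of $X$ over $k$ to this morphism yields
\[
\deg_{\mathcal{C}_s}F_s^\ast\mathcal{L}\;\leq\;\alpha\cdot(2\,\mathrm{genus}(\mathcal{C}_s)-2)+\beta.
\]

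Finally, I would explain that the genus and the pulled-back degree are invariant under base change in this flat family: the genus of $\mathcal{C}_s$ equals the genus of the generic fiber $C$ (smooth proper families of curves have locally constant arithmetic genus, hence constant geometric genus), and $\deg_{\mathcal{C}_s}F_s^\ast\mathcal{L}=\deg_C f^\ast\mathcal{L}_L$ since intersection numbers are constant in flat proper families. Substituting these equalities into the displayed inequality gives the desired bound for $f\colon C\to X_L$, completing the proof.

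The only genuine step requiring care is the spreading out in the first paragraph: one must simultaneously spread out $C$, the morphism $f$, and the ample line bundle, and then shrink $\Spec A$ enough to guarantee that smoothness, properness, and geometric connectedness of the family of curves all hold. None of this is difficult, but it is where all the technical content lives; everything after that is a clean application of the hypothesis on $X/k$.
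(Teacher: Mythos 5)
Your proof is correct and follows essentially the same route as the paper's: spread out the curve and the morphism over a finitely generated $k$-algebra, specialize to a $k$-point, and use that genus and degree are constant in the resulting flat proper family. The only cosmetic difference is that the paper first disposes of curves of genus at most one by noting $X_L$ is groupless and then argues with slopes for $g\geq 2$, whereas you apply the $\alpha(2g-2)+\beta$ bound uniformly to all genera, which works just as well.
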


The Green--Griffiths--Lang conjecture says that a projective  variety over $k$ is algebraically hyperbolic over $k$ if (and only if) it is groupless over $k$ (Definition \ref{defn:groupless}). Now, it is not hard to see that for $k\subset L$ an extension of algebraically closed fields of characteristic zero, a variety $X$ over $k$ is groupless over $k$ if and only if $X_L$ is groupless over $L$ (see Lemma \ref{lem:gr}). In particular, Theorem \ref{thm:geometricity} is in accordance with Green--Griffiths--Lang's aforementioned conjecture.

 The fact that the moduli of maps from \emph{any} given curve to an algebraically hyperbolic variety $X$ is bounded (by definition) has consequences for the moduli of maps from \emph{any} given variety to $X$, and also for the endomorphisms of $X$. The precise result we obtain reads as follows.

 \begin{theorem}\label{thm:aut_end}   Let $X$ be a projective algebraically hyperbolic   variety over $k$.   The following statements hold.
 \begin{enumerate}
\item   If $Y$ is a     projective reduced scheme over $k$, then the set of surjective morphisms   $Y\to X$ is finite. 
  \item  Assume that $X$ is reduced. Then, the group $\Aut(X)$ is finite, every  surjective endomorphism $X\to X$ of $X$ is an automorphism, and $X$ has only finitely many surjective endomorphisms. 
  \end{enumerate}
  \end{theorem}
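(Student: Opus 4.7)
The plan is to deduce part~(2) formally from part~(1), so the main work lies in~(1): showing that the set of surjective morphisms $Y\to X$ is finite. I interpret this set as the $k$-points of an open subscheme of $\Hom(Y,X)$, a countable disjoint union of quasi-projective schemes over $k$ indexed by Hilbert polynomials of graphs, and I aim to show (a) only finitely many components of $\Hom(Y,X)$ contain a surjective morphism, and (b) each such component is $0$-dimensional.

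For (a), fix ample line bundles $\mathcal{L}$ on $X$ and $\mathcal{M}$ on $Y$, and set $d=\dim Y$, $n=\dim X$. The Hilbert polynomial of the graph $\Gamma_f\subset Y\times X$ with respect to $\pi_1^\ast\mathcal{M}\otimes\pi_2^\ast\mathcal{L}$ is controlled by the intersection numbers $s_i(f):=\mathcal{M}^{d-i}\cdot (f^\ast\mathcal{L})^i$ for $0\leq i\leq d$, so it suffices to bound these uniformly in $f$. Now $s_0=\mathcal{M}^d$ is fixed; and $s_1$ is bounded because, by Bertini, for $m$ sufficiently large and general $D_1,\dots,D_{d-1}\in |\mathcal{M}^{\otimes m}|$, the intersection $C=D_1\cap\dots\cap D_{d-1}$ is a smooth connected curve of a fixed genus, and algebraic hyperbolicity applied to $f|_C\colon C\to X$ yields $m^{d-1}s_1(f)=\deg_C(f|_C)^\ast\mathcal{L}\leq \alpha(2\,\mathrm{genus}(C)-2)+\beta$. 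For $i\geq 2$, since $\mathcal{M}$ is ample and $f^\ast\mathcal{L}$ is nef (as $\mathcal{L}$ is ample), the log-concavity form of the Khovanskii--Teissier inequality gives $s_{i-1}^2\geq s_{i-2}s_i$; combined with the positivity of $s_0,\dots,s_n$ for surjective $f$ (a general complete intersection subvariety of $Y$ of dimension $i\leq n$ maps generically finitely onto a subvariety of $X$ of dimension $i$) and the vanishing $s_i=0$ for $i>n$ (the image then has dimension $<i$), induction gives $s_i\leq s_1^i/s_0^{i-1}$ for $1\leq i\leq n$. Hence all $s_i$ are uniformly bounded, and only finitely many Hilbert polynomials arise.

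For (b), which I expect to be the main obstacle, I argue by contradiction: suppose some irreducible component $H\subset \Hom(Y,X)$ containing a surjective $f_0$ has positive dimension. Pick a smooth projective connected curve $T$ and a non-constant morphism $T\to H$, obtaining a non-trivial family $F\colon T\times Y\to X$ in which some morphism $t\mapsto F(t,y_0)$ from $T$ to $X$ is non-constant. Restricting to $T\times C$ for a general complete intersection curve $C\subset Y$ as above yields a morphism from a surface to $X$ that is non-constant in both factor directions. The plan is then to find smooth curves $D\subset T\times C$ of controlled genus whose intersection $F^\ast\mathcal{L}\cdot D$ is arbitrarily large, contradicting the algebraic hyperbolicity bound applied to $F|_D\colon D\to X$. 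The delicate point is that the obvious curves -- fibres of the two projections, and graphs of morphisms between $T$ and $C$ -- all give restrictions of bounded degree; constructing sufficiently many ``correspondence''-type curves, or applying a direct rigidity argument to the family $F$, is the content of the argument.

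Granting part~(1), part~(2) is formal. Applying (1) with $Y=X$, the set $E$ of surjective endomorphisms of $X$ is finite, and since $\Aut(X)\subset E$ the group $\Aut(X)$ is finite. For any $\phi\in E$, the iterates $\phi,\phi^2,\dots$ all lie in $E$, so $\phi^m=\phi^n$ for some $m>n\geq 1$. Since $\phi^n$ is surjective, every $x\in X$ equals $\phi^n(y)$ for some $y$, and then $\phi^{m-n}(x)=\phi^{m-n}\phi^n(y)=\phi^m(y)=\phi^n(y)=x$; hence $\phi^{m-n}=\mathrm{id}_X$ (using reducedness of $X$ to pass from equality on points to equality of morphisms), so $\phi$ is an automorphism with inverse $\phi^{m-n-1}$.
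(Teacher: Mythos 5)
Your reduction of part (2) to part (1) is correct and is essentially what the paper does (Corollary \ref{Aut-fin-bound}), and your step (a) is workable: it parallels the paper's Lemma \ref{Claim} and Theorem \ref{thm:1_implies_n}, which bound $f^\ast\mathcal{O}(1)\cdot D^{\dim Y-1}$ by restricting to hyperplane sections and then control the numerical class by the Hodge index theorem and Lefschetz, rather than by Khovanskii--Teissier log-concavity. (Two small caveats there: you should first replace $Y$ by a desingularization of an irreducible component so that Bertini and the intersection-theoretic inequalities apply, and you need the additional classical fact that reduced subschemes of $\PP^N$ of bounded dimension and degree have only finitely many Hilbert polynomials -- bounding the leading intersection numbers $s_i$ alone does not bound the lower-order coefficients of the Hilbert polynomial of the graph.)

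The genuine gap is step (b), which you flag as the main obstacle and then leave open; unfortunately the repair you sketch cannot work as stated. Given a non-trivial family $F\colon T\times C\to X$ with $a=F^\ast\mathcal{L}\cdot(\{t\}\times C)>0$ and $b=F^\ast\mathcal{L}\cdot(T\times\{y\})>0$, a general member $D_m$ of $|m(T\times\{pt\})+(\{pt\}\times C)|$ has $\deg_{D_m}F^\ast\mathcal{L}=mb+a$, but by adjunction $2\,\mathrm{genus}(D_m)-2=2m\,\mathrm{genus}(T)+2\,\mathrm{genus}(C)-2$, so degree and genus grow at the same linear rate and the slope stays bounded: no contradiction with algebraic hyperbolicity arises (unless $T$ happens to be rational, which you cannot arrange). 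The same phenomenon occurs for every curve you can manufacture inside $T\times C$, which is exactly why the paper does not argue this way. Instead, the paper proves that $\underline{\mathrm{Sur}}(Y,X)$ is zero-dimensional by quoting the theorem of Hwang--Kebekus--Peternell (Theorem \ref{thm:HKP}): since $X$ has no rational curves, the component of $\underline{\Hom}_k(Y,X)$ through a surjective $f$ factors through a finite cover $Z\to X$ and is dominated by $\Aut^0_{Z/k}$, and grouplessness of $X$ (Corollary \ref{cor:alg_hyp_is_groupless}, proved via the multiplication-by-$\ell$ trick on abelian varieties) forces $\Aut^0_{Z/k}$ to be trivial. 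Some rigidity input of this kind for deformations of surjective maps is the missing idea; without it your proof of (1) is incomplete.
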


The analogue of the first statement of Theorem \ref{thm:aut_end} for Kobayashi hyperbolic varieties was obtained by Noguchi; see \cite{Noguchi13} or  \cite[Theorem~6.6.2]{Kobayashi}. This latter statement (for Kobayashi hyperbolic varieties) was (also) conjectured by  Lang and has a long history; see \cite[\S6.6]{Kobayashi} for a discussion. For instance,  earlier results were obtained by Horst \cite{Horst4}. 
 An analogue of the statement about automorphisms for Kobayashi hyperbolic varieties is contained in \cite[Theorem~5.4.4]{Kobayashi}, and an analogue of the statements about endomorphisms for Kobayashi hyperbolic varieties is an application of \cite[Theorem~6.6.20]{Kobayashi} and \cite[Theorem~5.4.4]{Kobayashi}. Thus, needless to emphasize, we see that Theorem \ref{thm:aut_end} is in accordance with Demailly's conjecture (Conjecture \ref{conj:demailly}).

\begin{remark} In \cite[Theorem~3.5]{BKV}, the finiteness of $\Aut(X)$ is proven when $X$ is a \emph{smooth} projective algebraically hyperbolic variety over $\CC$. We stress that we do not impose smoothness. Moreover, we  allow for the base field to be any algebraically closed field of characteristic zero. Our proof of Theorem \ref{thm:aut_end} is different than the proof in \emph{loc. cit.} and allows for   a more general result (see Theorem \ref{thm:pointed_hom2}).
\end{remark}

The finiteness results in Theorem \ref{thm:aut_end} for surjective morphisms from a projective scheme to an algebraically hyperbolic projective scheme can in fact be subsumed into the following statement (which we prove using Theorem \ref{thm:aut_end}). 

\begin{theorem}\label{thm:pointed_hom}   
Let $X$ be an algebraically hyperbolic projective scheme over $k$. Then, for every projective scheme $Y$ over $k$, every non-empty reduced closed subscheme $B\subset Y$, and every reduced closed subscheme $A\subset X$, the set of morphisms $f:Y\to X$ with $f(B)=A$ is finite. 
 \end{theorem}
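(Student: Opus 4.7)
The plan is to reduce to Theorem~\ref{thm:aut_end}(1) in two stages.

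First, observe that a reduced closed subscheme $A \subseteq X$ inherits algebraic hyperbolicity from $X$: if $\mathcal{L}$ witnesses the defining inequality of Definition~\ref{def:alg_hyp} for $X$, then $\mathcal{L}|_A$ is ample on $A$, and for any morphism $C \to A$ from a smooth projective curve the $\mathcal{L}|_A$-degree equals the $\mathcal{L}$-degree of the composition $C \to A \hookrightarrow X$. Since $B$ is reduced and projective (as a reduced closed subscheme of the projective $Y$) and $A$ is algebraically hyperbolic, Theorem~\ref{thm:aut_end}(1) applied to $B \to A$ gives that the set $S$ of surjective morphisms $B \to A$ is finite. Every $f \colon Y \to X$ with $f(B) = A$ has $f|_B$ surjective (as $A$ is reduced), so $f|_B \in S$. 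The restriction map $f \mapsto f|_B$ thus has finite image, and it suffices to prove that for each fixed $g \in S$ the set $H_g := \{ f \colon Y \to X : f|_B = g\}$ is finite.

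Next, I argue by contradiction. The Hom scheme $\underline{\Hom}(Y, X)$ is, by Grothendieck, a countable disjoint union of quasi-projective $k$-schemes, inside which $H_g$ is a closed subscheme. If infinite, $H_g$ contains a positive-dimensional irreducible component $V$. Passing to a smooth projective curve $T \subseteq V$ (normalized if necessary) yields a morphism $F \colon T \times Y \to X$ with $F(t, b) = g(b)$ for all $(t, b) \in T \times B$, and such that the maps $f_t := F(t, -) \colon Y \to X$ are not all equal.

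The final step---and the main obstacle---is to extract a contradiction from the family $F$. The plan is to consider the image $W := F(T \times Y) \subseteq X$, which is a closed subscheme of $X$ and therefore algebraically hyperbolic. Assuming first that $Y$ is reduced, $T \times Y$ is also reduced and projective, and $F \colon T \times Y \to W$ is a surjective morphism. Theorem~\ref{thm:aut_end}(1) applied to $W$ then bounds the set of such surjective morphisms from $T \times Y$ onto $W$ by a finite number. The plan is to exploit the non-triviality of the family $T \to H_g$ (equivalently, the non-constancy of $t \mapsto f_t$) to produce more distinct surjective morphisms $T \times Y \to W'$---for appropriately chosen subschemes $W' \subseteq X$ obtained by reparameterizing the family or by varying the chosen curve inside the positive-dimensional component---than this finite bound allows, producing the desired contradiction. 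Handling non-reduced $Y$ requires a supplementary deformation-theoretic argument: one separates the restriction $f|_{Y_{\mathrm{red}}}$, which is controlled by the reduced case just established, from the nilpotent lifts, which are rigidified by the condition $f|_B = g$ on the non-empty reduced $B$.
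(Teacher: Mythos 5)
Your first reduction (restrict to $B$, note that $A$ is algebraically hyperbolic since $A\hookrightarrow X$ is finite, invoke Theorem~\ref{thm:aut_end}(1) to get finitely many possible restrictions $g=f|_B$, and reduce to the finiteness of $H_g=\{f:Y\to X \mid f|_B=g\}$) is sound and essentially parallels the paper's reduction in Corollary~\ref{Hom-subsch-finite}. The problem is the second stage. The step ``if $H_g$ is infinite, it contains a positive-dimensional irreducible component'' is a genuine gap: $\underline{\Hom}_k(Y,X)$ is only a \emph{countable disjoint union} of quasi-projective schemes, so an infinite $H_g$ could a priori consist of countably many isolated points spread over infinitely many components (with unbounded Hilbert polynomials). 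Ruling this out is exactly the content of \emph{boundedness}, and it is the hardest part of the paper's argument: one must show that an algebraically hyperbolic (hence $1$-bounded) scheme is $n$-bounded for all $n$ (Theorem~\ref{thm:1_implies_n}), which requires the intersection-theoretic Lemma~\ref{Claim} (Hodge index, Lefschetz hyperplane, induction on dimension). Your proposal contains no substitute for this finiteness-of-type input, so the dichotomy ``finite or positive-dimensional'' is unjustified.

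Even granting a positive-dimensional component $V$, the concluding step is not carried out. You would first need $V$ to be \emph{proper} to extract a projective curve $T$ and run any rigidity-type argument; properness of components of $\underline{\Hom}_k(Y,X)$ requires purity of $X$ (no rational curves), i.e.\ Proposition~\ref{prop:projectivity_of_Hom}, which you never establish. Once you have a complete positive-dimensional family all of whose members agree on the non-empty set $B$, the clean finish is the rigidity lemma (Lemma~\ref{lem:applying_rigidity}): the evaluation map at a point of $B$ is constant on $V$, hence $V$ is a single point --- no counting of surjections onto varying images $W'$ is needed, and your proposed count does not obviously work since Theorem~\ref{thm:aut_end}(1) bounds surjections onto a \emph{fixed} target while your targets $W'$ vary with the reparametrization. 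Finally, the non-reduced case of $Y$ is deferred to an unspecified ``deformation-theoretic argument''; in the paper's route this issue dissolves because the reduction is to pointed Hom-sets $\Hom_k((Y,b),(X,a))$ and to the zero-dimensionality plus finite-typeness of the corresponding Hom-schemes, rather than to a dimension count on $H_g$ itself.
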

 
 The  analogue of Theorem \ref{thm:pointed_hom} for Kobayashi hyperbolic varieties when $\dim B = \dim A =0$ is Urata's theorem (see   \cite[Theorem~5.3.10]{Kobayashi} or the original paper \cite{Urata}).  Also, the analogue of the  statement of Theorem \ref{thm:pointed_hom} for Kobayashi hyperbolic  varieties is contained in \cite[Corollary~6.6.8]{Kobayashi}.  Thus, needless to stress, Theorem \ref{thm:pointed_hom} is also in accordance with Demailly's conjecture (Conjecture \ref{conj:demailly}). 
 
To state our following result, for $X$ and $Y$ projective schemes over $k$, we let $\underline{\Hom}_k(X,Y)$ be the associated Hom-scheme (see Section \ref{S2}). Moreover, we let $\underline{\Hom}_k^{nc}(X,Y)$ be the subscheme parametrizing   non-constant morphisms $X\to Y$.  

Roughly speaking, our next result verifies that moduli spaces of maps to a projective algebraically hyperbolic variety are (also) projective and algebraically hyperbolic.

\begin{theorem}\label{thm:projectivity_of_hom}  Let $X$ be a projective algebraically hyperbolic variety over $k$. If $Y$ is a projective scheme over $k$, then  the scheme $\underline{\Hom}_k(Y,X)$ is a projective algebraically hyperbolic scheme over $k$. Moreover, we have that $\dim\underline{\Hom}_k^{nc}(Y,X) <  \dim X$.
\end{theorem}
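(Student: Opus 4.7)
The plan is to establish the three assertions---projectivity, algebraic hyperbolicity, and the dimension bound---in that order, each building on the previous. I would view $\underline{\Hom}_k(Y,X)$ as an open subscheme of the Hilbert scheme $\mathrm{Hilb}(Y \times X)$ via the graph embedding $f \mapsto \Gamma_f$, and then prove projectivity by establishing (i) that only finitely many Hilbert polynomials arise and (ii) that the graph locus is closed in its closure. For (i), fix ample line bundles $\mathcal{L}$ on $X$ and $\mathcal{M}$ on $Y$; the Hilbert polynomial of $\Gamma_f$ with respect to $\mathcal{M}\boxtimes\mathcal{L}$ is determined by the numerical class of $f^{\ast}\mathcal{L}$ in $\NS(Y)$, so it suffices to bound this class. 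Since $X$ is algebraically hyperbolic, the intersection $C\cdot f^{\ast}\mathcal{L}$ is bounded in terms of the genus of the normalization of $C$ (and vanishes when that genus is at most one, as maps from curves of genus $\le 1$ to $X$ are constant); pairing against a fixed finite set of curves spanning $N_1(Y)_{\mathbb{Q}}$ then confines $f^{\ast}\mathcal{L}$ to finitely many numerical classes. For (ii) I would verify the valuative criterion: given $f_K : Y_K \to X_K$ over the fraction field of a discrete valuation ring $R$, take the closure $\overline{\Gamma} \subset Y_R \times X_R$ of the graph; if the projection $\overline{\Gamma} \to Y_R$ fails to be an isomorphism on the special fibre, the standard argument---resolving the indeterminacy of the induced rational map $Y_R \dashrightarrow X_R$ and inspecting the exceptional divisor of the blow-up---produces a rational curve in $X$, contradicting algebraic hyperbolicity.

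For algebraic hyperbolicity of $\underline{\Hom}_k(Y,X)$, fix any closed point $y_0 \in Y$ and consider the evaluation $\ev_{y_0} : \underline{\Hom}_k(Y,X) \to X$, $f \mapsto f(y_0)$. By Theorem \ref{thm:pointed_hom} (applied with $B = \{y_0\}$ and $A = \{x\}$ for $x \in X$), every fibre is finite; together with the projectivity just shown, this makes $\ev_{y_0}$ a finite morphism. Hence $\mathcal{H} := \ev_{y_0}^{\ast}\mathcal{L}$ is ample on $\underline{\Hom}_k(Y,X)$, and for every smooth projective connected curve $C$ and every morphism $g : C \to \underline{\Hom}_k(Y,X)$, applying algebraic hyperbolicity of $X$ to the composite $\ev_{y_0} \circ g : C \to X$ gives
\[
\deg_C g^{\ast}\mathcal{H} \;=\; \deg_C (\ev_{y_0}\circ g)^{\ast}\mathcal{L} \;\leq\; \alpha\,(2\,\mathrm{genus}(C) - 2) + \beta,
\]
which is the required inequality with the same constants $\alpha$ and $\beta$.

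For the dimension bound (where I may assume $Y$ connected), suppose for contradiction that some irreducible component $Z$ of $\underline{\Hom}_k^{nc}(Y,X)$ satisfies $\dim Z \geq \dim X$, and let $\overline{Z}$ be its closure in the projective scheme $\underline{\Hom}_k(Y,X)$. By Theorem \ref{thm:pointed_hom} and projectivity, for every $y \in Y$ the evaluation $\ev_y|_{\overline{Z}} : \overline{Z} \to X$ is a finite morphism; a dimension count then gives $\dim \overline{Z} = \dim X$ and, since $\overline{Z}$ is irreducible and $X$ is a variety, $\ev_y|_{\overline{Z}}$ is surjective. Thus $y \mapsto \ev_y|_{\overline{Z}_{\mathrm{red}}}$ defines a morphism from $Y$ into the set of surjective morphisms $\overline{Z}_{\mathrm{red}} \to X$, which is finite by Theorem \ref{thm:aut_end}(1). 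Connectedness of $Y$ forces this morphism to be constant, so $f(y) = f(y_0)$ for all $y \in Y$ and all $f \in \overline{Z}$, contradicting $Z \subset \underline{\Hom}_k^{nc}(Y,X)$.

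The main obstacle is the projectivity assertion and, within it, the properness step: rigorously turning the heuristic \emph{a degeneration of graphs must create a rational curve in $X$} into a proof requires careful control of the special fibre of $\overline{\Gamma}$ and, when $Y$ is singular, additional care with a resolution of indeterminacy.
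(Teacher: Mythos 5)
Your proposal is correct and reaches all three assertions, but the route to projectivity is genuinely different from the paper's. For finiteness of Hilbert polynomials the paper first proves that algebraic hyperbolicity implies $1$-boundedness and then that $1$-bounded implies bounded (Theorem \ref{alg-hyp-bounded} via Theorem \ref{thm:1_implies_n}), where the passage from curves to higher-dimensional $Y$ is an induction on $\dim Y$ using Lemma \ref{Claim} (Hodge index theorem plus Lefschetz hyperplane sections) to control the numerical class of $f^{\ast}\mathcal{O}(1)$. Your argument---pair $f^{\ast}\mathcal{L}$ against finitely many integral curves spanning $N_1(Y)_{\QQ}$, note that each pairing is a non-negative integer bounded by $\alpha(2g-2)+\beta$ for the normalization, and conclude by singular Riemann--Roch---reaches the same conclusion in one step and exploits the genus-linear bound directly rather than only the finite-type statement for curves; it is, if anything, cleaner. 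For properness the paper does not run the valuative criterion on graphs: it shows that algebraically hyperbolic varieties are groupless, hence have no rational curves, hence are \emph{pure} (Lemma \ref{lem:pure_means_no_rat_curve}, via \cite{GLL}), so a map defined off a codimension-two locus of $Y\times_k S$ extends (Proposition \ref{prop:projectivity_of_Hom}). Your bend-and-break-style argument (a non-extendable limit of graphs forces a rational curve in $X$) is the other standard proof of the same fact, and you correctly locate the delicate point: one needs $Y_R$ regular for the exceptional locus of a resolution of indeterminacy to be ruled, so one must first replace $Y$ by a desingularization---a reduction the paper also makes, equally briefly, in Corollary \ref{cor:projectivity_of_Homs}. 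From there your argument coincides with the paper's: hyperbolicity of the Hom scheme via finiteness of $\mathrm{eval}_{y_0}$ and pullback of the ample bundle (you get quasi-finiteness from Theorem \ref{thm:pointed_hom}, the paper from the rigidity lemma, which is what underlies Theorem \ref{thm:pointed_hom} anyway), and the strict bound on $\dim\underline{\Hom}^{nc}_k(Y,X)$ via finiteness of the set of surjections $\overline{Z}\to X$ from Theorem \ref{thm:aut_end}(1); your version of this last step makes the role of connectedness of $Y$ and the local constancy of $y\mapsto\mathrm{eval}_y$ somewhat more explicit than the paper does.
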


The analogue of Theorem \ref{thm:projectivity_of_hom} for Kobayashi hyperbolic projective varieties over $\CC$  is provided by
 \cite[Theorem~5.3.9]{Kobayashi} and \cite[Theorem~6.4.1]{Kobayashi}. Thus, like the two results above, Theorem \ref{thm:projectivity_of_hom}  is also in accordance with Demailly's conjecture (Conjecture \ref{conj:demailly}). 

 In the hope of understanding what properties of a projective scheme are sufficient for the conclusions of 
   Theorems   \ref{thm:geometricity}, \ref{thm:aut_end},   \ref{thm:pointed_hom}, and \ref{thm:projectivity_of_hom} to hold, we also investigate ``weaker'' notions of hyperbolicity.

 \subsection{Weaker notions of boundedness}\label{section:further_results} The results in the previous section were motivated by Demailly's conjecture (Conjecture \ref{conj:demailly}). With a view towards Green--Griffiths--Lang's more general conjecture, we seek for analogues of the results in Section \ref{section:main_results} for ``weaker'' notions of (algebraic) hyperbolicity. 
 
\begin{definition}
A projective scheme $X$ over $k$ is \emph{$1$-bounded over $k$} if for every smooth projective connected curve $C$ over $k$, the scheme $\underline{\Hom}_k(C,X)$ is of finite type over $k$.
\end{definition}

Note that, if $\mathcal{L}$ is an ample line bundle on a projective scheme $X$ over $k$, then $X$ is $1$-bounded over $k$ if and only if, for every smooth projective connected curve $C$ over $k$, there is a real number $\alpha_C$ (which depends only on $X$ and $C$) such that, for every morphism $f:C\to X$ the inequality  $\deg_C f^\ast \mathcal{L} \leq \alpha_C$ holds. 

Clearly, an algebraically hyperbolic projective scheme over $k$ is $1$-bounded over $k$. 
The ``difference'' between algebraic hyperbolicity and $1$-boundedness is in the uniformity of the bound we demand on the degree of a morphism $f:C\to X$. For $X$ to be algebraically hyperbolic, we demand $\deg_C f^\ast L$ to be bounded \textbf{linearly} in the \textbf{genus} of $C$. For $X$ to be $1$-bounded, we ask the latter to be bounded by a real number depending only on $C$.
 
 Despite the clear difference in the definitions, it seems reasonable to suspect that a $1$-bounded projective variety is algebraically hyperbolic over $k$. As we explain in Section \ref{S10},   the Green--Griffiths--Lang conjecture in fact predicts that $1$-bounded projective schemes over $k$ are algebraically hyperbolic over $k$. The results in this section are motivated by this latter observation. 

 We first show that $1$-boundedness is also a ``geometric'' property, i.e., it persists over any algebraically closed field extension of $k$.
 
 \begin{theorem}\label{thm:geometricity2} Let $k\subset L$ be an extension of algebraically closed fields of characteristic zero.
 Let $X$ be a projective $1$-bounded scheme over $k$.  Then  the projective scheme $X_L$ is $1$-bounded over $L$.
 \end{theorem}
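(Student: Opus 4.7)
The plan is to adapt the spreading-out and specialization strategy used in the proof of Theorem \ref{thm:geometricity} (the geometricity of algebraic hyperbolicity) to the $1$-bounded setting, taking into account that the bound $\alpha_C$ in the $1$-boundedness definition depends on the individual curve $C$ rather than merely on its genus.

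First I reduce to the case where $L$ has transcendence degree at most one over $k$. Any smooth projective connected curve $C_L$ over $L$ descends to a smooth projective family $\mathcal{C}_0 \to \Spec R_0$ over a finitely generated $k$-subalgebra $R_0 \subset L$, and $\underline{\Hom}_L(C_L, X_L)$ is the base change to $L$ of $\underline{\Hom}_{R_0}(\mathcal{C}_0, X_{R_0})$. Since ``being of finite type'' descends under faithfully flat field extensions, it suffices to prove the theorem after replacing $L$ by the algebraic closure of $\mathrm{Frac}(R_0)$ inside $L$. Induction on transcendence degree (with limit steps handled by filtered colimits of finitely generated subalgebras) then reduces everything to $\mathrm{trdeg}(L/k) = 1$. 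In this case $R_0 \subset L$ has Krull dimension at most one; if $\dim R_0 = 0$ then $R_0 = k$ and the conclusion follows immediately by base change, so assume $\Spec R_0$ is an affine $k$-curve. I pick a $k$-rational point $p_0 \in \Spec R_0(k)$ (which exists by the Nullstellensatz) and set $\alpha := \alpha_{(\mathcal{C}_0)_{p_0}}$, the finite bound given by $1$-boundedness of $X/k$ applied to the curve $(\mathcal{C}_0)_{p_0}$.

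Given a morphism $f \colon C_L \to X_L$, I spread $f$ out to $\tilde f \colon \mathcal{C}_0 \times_{R_0} R_f \to X_{R_f}$ over some finitely generated $k$-subalgebra $R_0 \subset R_f \subset L$. Both $R_0$ and $R_f$ are one-dimensional finitely generated $k$-domains, and the dominant morphism $\Spec R_f \to \Spec R_0$ has image containing all but finitely many closed points of $\Spec R_0$ (the complement being the finitely many closed points of $\Spec R_0$ that are not hit by the finite cover associated to the finite field extension $\mathrm{Frac}(R_0) \hookrightarrow \mathrm{Frac}(R_f)$). Provided $p_0$ lies in this image, I lift it to a $k$-point $p' \in \Spec R_f(k)$ (the fiber over $p_0$ is a non-empty finite-type $k$-scheme, hence has a $k$-point since $k$ is algebraically closed) and specialize $\tilde f$ at $p'$ to obtain a morphism $g \colon (\mathcal{C}_0)_{p_0} \to X$ with $\deg g^* \mathcal{L} = \deg f^* \mathcal{L}_L$, the latter equality holding because the degree is locally constant on the integral base $\Spec R_f$. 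Hence $\deg f^* \mathcal{L}_L \leq \alpha$.

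The main obstacle will be arranging that the single $p_0$ lies in the image of $\Spec R_f \to \Spec R_0$ for every $f$ simultaneously, i.e., in the intersection over the countably many degrees $d$ with $\underline{\Hom}_L^d(C_L, X_L) \neq \emptyset$ of cofinite subsets of $\Spec R_0(k)$. When $k$ is uncountable this intersection is automatically non-empty and we pick such a $p_0$; when $k$ is countable one needs an additional ingredient, for instance by replacing each $\underline{\Hom}^d_{R_0}$ by its closure in the relative Hilbert scheme $\mathrm{Hilb}^{P_d}_{R_0}$ (which is projective over $R_0$, hence surjects onto $\Spec R_0$ as soon as its generic fiber is non-empty) and analyzing the possibly degenerate limit subschemes of $(\mathcal{C}_0)_{p_0} \times X$ at $p_0$, or by reducing to the uncountable case via a suitable enlargement of the base field and fpqc descent.
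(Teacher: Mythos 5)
Your argument is correct and complete when $k$ is uncountable, and up to that point it is a reasonable specialization argument parallel to the proof of Theorem \ref{thm:geometricity}. But the case of countable $k$ (e.g.\ $k=\Qbar$, which is the motivating case for this theorem) is left as an acknowledged obstacle, and neither of your two proposed remedies closes it. Enlarging the base field to reduce to the uncountable case is circular: to replace $k$ by an uncountable algebraically closed field $k_1$ with $k\subset k_1\subset L$ you would need to know that $X_{k_1}$ is $1$-bounded over $k_1$, which is exactly an instance of the theorem being proved. Your other suggestion --- taking closures of $\underline{\Hom}^d_{R_0}(\mathcal{C}_0,X_{R_0})$ inside the relative Hilbert scheme $\mathrm{Hilb}^{P_d}_{R_0}$ and ``analyzing the possibly degenerate limit subschemes'' --- points in the right direction but omits the one ingredient that makes the analysis work, and in general such limits genuinely do degenerate (this is the mechanism of bend-and-break; for $X=\PP^1$ a family of morphisms over a punctured curve need not extend).

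The missing idea is the chain: $1$-bounded $\Rightarrow$ groupless (Proposition \ref{prop:nm_bounded_is_groupless}) $\Rightarrow$ no rational curves, i.e.\ pure (Proposition \ref{prop:groupless_implies_pure} and Lemma \ref{lem:pure_means_no_rat_curve}) $\Rightarrow$ the morphism $\underline{\Hom}^d_{R_0}(\mathcal{C}_0, X\times\Spec R_0)\to\Spec R_0$ is \emph{proper} (Lemma \ref{lem:properness_of_hom}, or the valuative-criterion argument of Proposition \ref{prop:projectivity_of_Hom}). Purity is precisely what guarantees that a morphism defined over the generic point of the curve $\Spec R_0$ extends, after the codimension-$\geq 2$ gap is filled, to a morphism over all of $\Spec R_0$; hence the image of $\underline{\Hom}^d_{R_0}(\mathcal{C}_0,X_{R_0})$ in $\Spec R_0$ is closed, and being dense it is everything. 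With this input every $k$-point $p_0$ works, the finitely-many-bad-points problem disappears, and your specialization argument goes through uniformly for countable and uncountable $k$. This is exactly what the paper's proof does, the only cosmetic difference being that the paper runs the argument by contradiction over the moduli stack $\mathcal{M}_g$ (intersecting the closed images $Z_{d_i}$ of the proper maps $\phi_{d_i}$ and extracting a $k$-point of $\cap_i Z_{d_i}$) rather than over your $\Spec R_0$. So the structure of your proof is salvageable, but as written it has a genuine gap whose repair requires importing the purity/properness mechanism that is the actual heart of the paper's argument.
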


Note that Theorems \ref{thm:aut_end} and \ref{thm:pointed_hom} follow from the following more general result.

\begin{theorem}\label{thm:pointed_hom2}   
Let $X$ be a $1$-bounded projective scheme over $k$. Then, for every projective scheme $Y$ over $k$, every non-empty reduced closed subscheme $B\subset Y$, and every reduced closed subscheme $A\subset X$, the set of morphisms $f:Y\to X$ with $f(B)=A$ is finite. 
 \end{theorem}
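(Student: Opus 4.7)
My plan is to combine a boundedness argument (using the $1$-boundedness of $X$) with a rigidity argument (using the rigidity lemma of Mumford--Grothendieck) to show that $H_{B,A} := \{f\colon Y \to X : f(B) = A\}$ is finite.

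\textbf{Boundedness.} By Grothendieck, $\underline{\Hom}_k(Y,X) = \bigsqcup_P \underline{\Hom}_k^P(Y,X)$ is a disjoint union of quasi-projective schemes indexed by the Hilbert polynomial $P$ of the graph $\Gamma_f \subset Y \times X$. Fix an ample line bundle $\mathcal{L}$ on $X$. I would show that the Hilbert polynomial of $\Gamma_f$ is uniformly bounded for $f \in H_{B,A}$, which places $H_{B,A}$ inside a finite-type subscheme of $\underline{\Hom}_k(Y,X)$. The $1$-boundedness of $X$ is used as follows: for any smooth projective connected curve $C$ with a morphism $\iota \colon C \to Y$, the degree $\deg_C (f \circ \iota)^* \mathcal{L}$ is bounded independently of $f$. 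Taking enough curves so that the pushforwards $(\iota_i)_*[C_i]$ span the relevant numerical classes on $Y$ bounds $f^* \mathcal{L}$ numerically, and hence the Hilbert polynomial of $\Gamma_f$.

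\textbf{Rigidity along restrictions to $B$.} For a fixed morphism $g \colon B \to X$ with $g(B) = A$, consider the closed subscheme $W_g := \{f \colon Y \to X : f|_B = g\}$ of $\underline{\Hom}_k(Y,X)$. I claim $W_g$ is zero-dimensional. Suppose for contradiction that a smooth projective connected curve $T$ maps non-constantly into $W_g$; the induced morphism $F \colon Y \times T \to X$ then satisfies $F(b,t) = g(b)$ for all $(b,t) \in B \times T$. Picking any $b_0 \in B$, which exists since $B$ is non-empty, $F$ is constant on the fiber $\{b_0\} \times T$ of the projection $\pi_Y \colon Y \times T \to Y$. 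Since $T$ is projective and geometrically connected, $(\pi_Y)_* \OO_{Y \times T} = \OO_Y$, and the Mumford--Grothendieck rigidity lemma then forces $F$ to factor through $\pi_Y$, i.e., $F(y,t) = h(y)$ for some $h \colon Y \to X$, contradicting the non-constancy of $T \to W_g$. Combined with Boundedness, this shows that $W_g \cap H_{B,A}$ is finite for each $g$.

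\textbf{Conclusion, and main obstacle.} Finiteness of $H_{B,A}$ then reduces to finiteness of the image $\{f|_B : f \in H_{B,A}\} \subseteq \underline{\Hom}_k^{\mathrm{surj}}(B,A)$. This is an instance of the theorem applied to $(A, B, B, A)$ in place of $(X, Y, B, A)$; the $1$-boundedness of $X$ is inherited by its closed subscheme $A$. The main obstacle is that this reduction does not strictly decrease any obvious invariant (the dimensions of source and target of the restriction problem are unchanged), so one needs either an induction to make it terminate or a separate direct argument. The natural route is induction on $\dim A$: the base case $\dim A = 0$ is handled directly, since then $f|_B$ is locally constant on $B$ with values in the finite set $A$ and so there are only finitely many possible restrictions; the inductive step $\dim A \geq 1$ would require an additional moduli-theoretic ingredient, for example a Bertini-type slicing of $A$ combined with a relative version of the rigidity argument to descend in $\dim A$.
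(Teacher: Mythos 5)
Your architecture (uniform boundedness of the Hilbert polynomial, plus rigidity for maps with fixed restriction to $B$, plus finiteness of the set of possible restrictions $f|_B$) matches the paper's quite closely, and your first two steps are essentially sound: the boundedness step is the content of the paper's Theorem \ref{thm:1_implies_n} (``$1$-bounded implies bounded''), proved there by induction on $\dim Y$ via hyperplane sections and a Hodge-index argument rather than by spanning curve classes, and your rigidity step is the paper's Lemma \ref{lem:applying_rigidity} combined with Proposition \ref{prop:zero_dimensionality}. But the third step is a genuine gap, and you have correctly located it yourself: you need the set of restrictions $\{f|_B : f(B)=A\}$, a subset of $\mathrm{Sur}(B,A)$, to be finite, and your proposed reduction to the instance $(A,B,B,A)$ is exactly the statement ``$\mathrm{Sur}(B,A)$ is finite'' again, with $\dim A$ unchanged. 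The induction on $\dim A$ therefore never gets off the ground for $\dim A\geq 1$, and no amount of Bertini slicing of $A$ obviously fixes this: the difficulty is not the geometry of $A$ but the possibility of a positive-dimensional family of surjections $B\to A$, which rigidity does not exclude (rigidity needs a contracted fiber, and a family of surjections contracts nothing).

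The paper closes this gap with a genuinely nontrivial external input: the theorem of Hwang--Kebekus--Peternell (Theorem \ref{thm:HKP}), which says that for a projective groupless target the scheme $\underline{\mathrm{Sur}}(Y',X')$ is a countable union of zero-dimensional projective schemes --- the proof factors a surjection through its Stein factorization $Y'\to Z\to X'$ and identifies the deformations of the surjection with $\Aut^0_{Z/k}$, which is trivial by grouplessness. Here $A$ is groupless and bounded because it is finite over the $1$-bounded scheme $X$ (Propositions \ref{prop:finite_morphisms} and \ref{prop:nm_bounded_is_groupless}), so $\mathrm{Sur}(B,A)$ is a zero-dimensional finite-type scheme, hence finite (Lemma \ref{lem:boundedness_implies_fin_sur}). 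With that in hand the paper's Corollary \ref{Hom-subsch-finite} concludes by fixing a single point $b\in B$: finiteness of $\mathrm{Sur}(B,A)$ bounds the set of possible values $f(b)$, and then $(n,1)$-boundedness gives finiteness of each pointed Hom-set $\Hom_k((Y,b),(X,a))$. You should either import Hwang--Kebekus--Peternell at this point or find a substitute for it; without it your proof does not terminate.
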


 Furthermore, analogous to Theorem \ref{thm:projectivity_of_hom}, we prove the following statement.

\begin{theorem}\label{thm:projectivity_of_hom2}   Let $X$ be a projective $1$-bounded scheme over $k$. If $Y$ is a projective scheme over $k$, then  the scheme $\underline{\Hom}_k(Y,X)$ is a projective $1$-bounded scheme over $k$. Moreover, we have that $\dim\underline{\Hom}_k^{nc}(Y,X) <  \dim X$.
\end{theorem}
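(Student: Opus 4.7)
My plan is to prove projectivity of $H := \underline{\Hom}_k(Y,X)$ first, deduce $1$-boundedness of $H$ via an exponential law, and then address the dimension of the non-constant locus.

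For projectivity, by Grothendieck's construction $H$ decomposes as a disjoint union of quasi-projective schemes $H_P$ indexed by Hilbert polynomials $P$ of graphs $\Gamma_f \subset Y \times X$, and each $H_P$ is proper because $X$ is. Projectivity of $H$ reduces to showing that only finitely many Hilbert polynomials occur. Fix ample line bundles $M$ on $Y$ and $L$ on $X$. For $f\colon Y\to X$, the Hilbert polynomial of $\Gamma_f$ with respect to $M\boxtimes L$ equals $P_f(t) = \chi(Y,(M\otimes f^\ast L)^{\otimes t})$, and by Riemann--Roch its coefficients are determined by fixed invariants of $Y$ together with the intersection numbers $(f^\ast L)^i\cdot M^{n-i}$ for $i=0,\dots,n:=\dim Y$. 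For $i=1$, a Bertini argument yields a smooth projective connected curve $C_0 \subset Y$ realising a fixed multiple of $M^{n-1}$, so that $(f^\ast L)\cdot M^{n-1}$ is (up to a constant) $\deg_{C_0} f^\ast L$, bounded by $1$-boundedness of $X$. For $i \geq 2$, the Khovanskii--Teissier log-concavity inequalities applied to the nef class $f^\ast L$ and the ample class $M$ give $(f^\ast L)^i\cdot M^{n-i}\leq ((f^\ast L)\cdot M^{n-1})^i / (M^n)^{i-1}$, so these are bounded too. Hence only finitely many Hilbert polynomials arise and $H$ is projective.

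For $1$-boundedness of $H$, given any smooth projective connected curve $C$ over $k$ the universal property of Hom-schemes provides a natural isomorphism $\underline{\Hom}_k(C, H) \cong \underline{\Hom}_k(C \times Y, X)$. Since $C \times Y$ is projective, the previous step applied to $C \times Y$ in place of $Y$ shows that the right-hand side is projective, in particular of finite type. Thus $\underline{\Hom}_k(C,H)$ is of finite type for every such $C$, i.e., $H$ is $1$-bounded.

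For the dimension bound, Theorem \ref{thm:pointed_hom2} applied with $B = \{y_0\}\subset Y$ and $A = \{x_0\}\subset X$ implies that $\ev_{y_0}\colon H \to X$ has finite fibres for every $y_0 \in Y$, so $\dim H \leq \dim X$ by properness of $H$. Since the leading coefficient of $P_f(t)$ involves $M^{n-1}\cdot f^\ast L$, which vanishes precisely when $f$ is constant, the subschemes $H^c$ and $H^{nc}$ are disjoint unions of components of $H$, with $H^c \cong X$ and hence $\dim H^c = \dim X$. The main obstacle is the strict inequality $\dim H^{nc} < \dim X$; I would argue by contradiction, assuming an irreducible component $Z \subset H^{nc}$ has $\dim Z = \dim X$, so that $\ev_{y_0}|_Z\colon Z \to X$ is finite and surjective. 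Varying the evaluation point would then define a non-constant morphism $Y \to \underline{\Hom}_k(Z, X)$, and after passing to an étale cover on which $\ev_{y_0}|_Z$ admits a section, one obtains a non-trivial family of morphisms $X \to X$ passing through $\mathrm{id}_X$ inside $\underline{\Hom}_k(X, X)$; the latter scheme is projective and $1$-bounded by the previous step applied to $X$ in place of $Y$, and applying Theorem \ref{thm:pointed_hom2} with $B = A = \{x_0\} \subset X$ to this family should yield the desired contradiction.
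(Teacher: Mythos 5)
Your first two steps are sound but take a genuinely different route from the paper: the paper proves that $1$-bounded implies bounded by induction on dimension (Theorem \ref{thm:1_implies_n}, via the Hodge index theorem and the Lefschetz hyperplane theorem), whereas you bound the single number $(f^\ast L)\cdot M^{n-1}$ by restricting to one fixed Bertini curve and control the rest by Khovanskii--Teissier; and your exponential law $\underline{\Hom}_k(C,\underline{\Hom}_k(Y,X))\cong \underline{\Hom}_k(C\times Y,X)$ is a clean substitute for the paper's deduction of $1$-boundedness of the Hom-scheme from the finiteness of the evaluation morphism together with Proposition \ref{prop:finite_morphisms}. Two repairable caveats: properness of each $H_P$ does \emph{not} follow from properness of $X$ alone (e.g.\ $\underline{\Hom}^d_k(\PP^1_k,\PP^1_k)$ is not proper); you need that $X$ has no rational curves, i.e.\ is pure, which holds because $1$-bounded implies groupless implies pure (Propositions \ref{prop:nm_bounded_is_groupless} and \ref{prop:groupless_implies_pure}) --- this is exactly Proposition \ref{prop:projectivity_of_Hom}. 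Also the Hilbert polynomial is not literally determined by the numbers $(f^\ast L)^i\cdot M^{n-i}$ (Todd-class terms intervene); the correct statement is that bounding $(f^\ast L)\cdot M^{n-1}$ confines the nef class $f^\ast L$ to finitely many numerical classes, and the Hilbert polynomial depends only on that class.

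The genuine gap is the strict inequality $\dim\underline{\Hom}^{nc}_k(Y,X)<\dim X$. Your proposed construction --- an \'etale cover over which $\ev_{y_0}|_Z$ acquires a section, then a family of endomorphisms of $X$ through the identity --- is not carried out and does not obviously work: a finite surjection $Z\to X$ need not admit a section after an \'etale cover, and even granting a positive-dimensional connected family of surjective endomorphisms, Theorem \ref{thm:pointed_hom2} with $B=A=\{x_0\}$ only gives quasi-finiteness of the evaluation map on that family, which is no contradiction unless the whole family fixes $x_0$ (yours need not). The missing ingredient is the finiteness of the set of \emph{surjective} morphisms $Z\to X$, which is already available to you: apply Theorem \ref{thm:pointed_hom2} with $Y:=Z$, $B:=Z$, $A:=X$ (taking $Z$ reduced). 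Then the map $y\mapsto \ev_y|_Z$, which you correctly observe lands in $\mathrm{Sur}(Z,X)$, takes only finitely many values $\ev_{y_1},\dots,\ev_{y_n}$; hence every $f\in Z(k)$ has image contained in the finite set $\{f(y_1),\dots,f(y_n)\}$ and is therefore constant, contradicting $Z\subset\underline{\Hom}^{nc}_k(Y,X)$. This is exactly how the paper concludes.
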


 \subsection{From boundedness to uniform boundedness}
 It $X$ is an algebraically hyperbolic projective variety, then it follows from the definitions that $X$ is   $1$-bounded. Conjecturally, a $1$-bounded projective variety should actually be algebraically hyperbolic. The    \emph{a priori} difference is, as explained above, the nature of the bound we demand for the degree of a map in the  definitions. Our final  result provides a first step towards showing that a bounded projective scheme is algebraically hyperbolic.
 
 \begin{theorem}[Towards algebraic hyperbolicity]\label{thm:boundedness_to_uniform}
 Let $X$ be a $1$-bounded projective scheme over $k$. Then, for every ample line bundle $\mathcal{L}$ and every integer $g\geq 0$, there is an integer $\alpha(X,\mathcal{L},g)$ such that, for every smooth projective connected curve $C$ of genus $g$ over $k$ and every morphism $f:C\to X$, the inequality
 \[
 \deg_C f^\ast \mathcal{L} \leq \alpha(X,\mathcal{L},g)
 \] holds.
 \end{theorem}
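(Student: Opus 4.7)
The proof plan has two parts. The easy case is $g \in \{0,1\}$, which follows directly from $1$-boundedness combined with the existence of self-covers of arbitrary degree. For $g = 0$, the unique smooth projective connected curve over $k$ is $\mathbb{P}^1_k$, which admits self-covers $\mathbb{P}^1 \to \mathbb{P}^1$ of every positive degree. For $g = 1$, every elliptic curve $E$ over $k$ admits the multiplication-by-$n$ isogeny $[n] : E \to E$ of degree $n^2$. If a nonconstant morphism $f : C \to X$ existed with $C$ of genus $0$ or $1$, then composing with these self-maps would produce morphisms $C \to X$ of $\mathcal{L}$-degree $n \cdot \deg f^*\mathcal{L}$ or $n^2 \cdot \deg f^*\mathcal{L}$, respectively, contradicting $1$-boundedness. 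Therefore every morphism from such a curve to $X$ is constant, and $\alpha(X, \mathcal{L}, 0) = \alpha(X, \mathcal{L}, 1) = 0$.

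The substantive case is $g \geq 2$. I would choose an irreducible finite-type $k$-scheme $T$ with a smooth projective relative curve $\pi : \mathcal{C} \to T$ of genus $g$ such that every smooth projective connected genus-$g$ curve over $k$ arises as some fiber $\mathcal{C}_t$, $t \in T(k)$ (e.g.\ take $T$ to be a fine moduli scheme of genus-$g$ curves with suitable level structure). It then suffices to find $\alpha$ bounding $\deg f^*\mathcal{L}$ uniformly over all $t \in T(k)$ and $f : \mathcal{C}_t \to X$. Decomposing the relative Hom scheme by $\mathcal{L}$-degree, $\underline{\Hom}_T(\mathcal{C}, X \times T) = \bigsqcup_d H^d$, with each $H^d \to T$ quasi-projective of finite type, and applying Theorem \ref{thm:geometricity2} to the algebraic closure $\overline{k(T)}$ of the function field, one obtains an integer $N$ such that every morphism $\mathcal{C}_{\bar\eta} \to X_{\overline{k(T)}}$ has $\mathcal{L}$-degree at most $N$. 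Base-changing a $k(T)$-morphism to $\overline{k(T)}$ preserves degree, so $(H^d)_\eta = \emptyset$ for $d > N$, and by Chevalley's theorem the image of $H^d \to T$ lies in a proper closed subset $Z_d \subsetneq T$ for every $d > N$.

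I would then conclude by noetherian induction on $T$. The base case $\dim T = 0$ follows directly from $1$-boundedness applied to the finitely many fibers. In the inductive step, the generic bound $N$ controls morphisms from fibers over $T \setminus \bigcup_{d>N} Z_d$, so it remains to bound degrees from fibers over the bad locus $\bigcup_{d>N} Z_d$. The main obstacle is that this bad locus is, a priori, a countable union of proper closed subsets of $T$, and applying the inductive hypothesis naively to each $Z_d$ produces a sequence of bounds whose supremum could diverge. My approach to overcoming this is to first reduce to the case of an uncountable algebraically closed base field (valid by Theorem \ref{thm:geometricity2}) and then combine a Baire-category argument with the projectivity of the fiberwise Hom scheme $\mathrm{Hom}(\mathcal{C}_t, X)$ provided by Theorem \ref{thm:projectivity_of_hom2} to show that $\bigcup_{d>N} Z_d$ is contained in a single proper closed subscheme $Z \subsetneq T$. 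Applying the inductive hypothesis to $Z$ with the restricted family $\mathcal{C}|_Z \to Z$ then yields $\alpha_Z$, and $\alpha := \max(N, \alpha_Z)$ is the desired uniform bound. Rigorously upgrading the pointwise Baire assertion into the scheme-theoretic statement that the Zariski closure of the bad locus is proper in $T$ is the main technical hurdle of the plan.
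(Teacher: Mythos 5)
Your treatment of $g\le 1$ is correct (and slightly more explicit than the paper's proof, which only addresses $g\ge 2$; alternatively, grouplessness from Proposition \ref{prop:nm_bounded_is_groupless} gives that case at once). The generic-point step for $g\ge 2$ is also sound: reducing to a larger algebraically closed field via Theorem \ref{thm:geometricity2} and extracting a degree bound $N$ at the geometric generic point of $T$ is fine, and since $X$ is pure each $H^d\to T$ is even proper (Lemma \ref{lem:properness_of_hom}), so each $Z_d$ is genuinely closed. The gap is exactly where you locate it, and it is not a technical hurdle but the entire content of the theorem: nothing in your plan promotes ``each $Z_d$ is a proper closed subset'' to ``$\bigcup_{d>N}Z_d$ lies in a proper closed subset.'' Over an uncountable field, Baire category only shows the union misses some $k$-point of $T$; it is perfectly consistent with the union being Zariski dense (think of $Z_d=\{d\}\subset\mathbb{A}^1$). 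The scenario you must exclude --- points $t_d\in T(k)$ carrying maps $\mathcal{C}_{t_d}\to X$ of degree $d\to\infty$ --- is not forbidden by $1$-boundedness (which bounds degrees only for each \emph{fixed} curve), nor by projectivity of each fiberwise Hom-scheme (Theorem \ref{thm:projectivity_of_hom2}), since those live over different fibers and the degree-$d$ components do not interact. Excluding it requires a compactness input that your setup, over the non-proper base $T$, never introduces; the noetherian induction therefore cannot close.

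The paper supplies that compactness differently and avoids the degree decomposition altogether: it passes to the universal \emph{stable} curve $\overline{\mathcal{U}}_g\to\overline{\mathcal{M}}_g$ over the proper moduli stack, pulls it back along a flat surjective cover $Z\to\overline{\mathcal{M}}_g$ by a normal projective scheme, and applies boundedness of $X$ in dimension $\dim Y=3g-2$ --- available because $1$-bounded implies bounded (Theorem \ref{thm:1_implies_n}, proved by the Hodge-index/Lefschetz induction) --- to the projective total space $Y=\overline{\mathcal{U}}_g\times_{\overline{\mathcal{M}}_g}Z$. Finite-typeness of $\underline{\Hom}_Z(Y,X\times Z)\to Z$ then descends and restricts to $\underline{\Hom}_{\mathcal{M}_g}(\mathcal{U}_g,X\times\mathcal{M}_g)\to\mathcal{M}_g$, giving the uniform bound in one stroke. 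If you want to repair your argument, the missing ingredients are precisely these two: the stable-curve compactification of the base, and the use of $n$-boundedness of $X$ for $n>1$ rather than only $1$-boundedness fiber by fiber.
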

 
 Thus,  the existence of a bound on the degree which is \emph{uniform in the genus}   is a consequence of the  existence of a  bound which could depend on the curve itself.  To prove   Theorem \ref{thm:boundedness_to_uniform} we use Theorem \ref{thm:projectivity_of_hom2}, and apply it to the universal stable curve of some fixed genus.

 \subsection{Applications and further results}
 In \cite{vBJK} we obtain further results on bounded and algebraically hyperbolic varieties. For example, we show that these notions are stable under generization. A survey of the results obtained here and in \cite{vBJK} is presented in \cite{JBook}.
 
As a further application of the results of this paper, we mention that  Theorem \ref{thm:geometricity} is used to prove that   an ``arithmetically hyperbolic'' projective variety over $k$ which is algebraically hyperbolic over $k$ remains arithmetically hyperbolic over any field extension of $k$; see \cite[Theorem~4.2]{Jaut} for a precise statement. Related results are also obtained in  \cite{JLev, JLitt}.    Moreover, one can also prove    ``arithmetic'' analogues of some of our results. For instance, the arithmetic analogues of the results in Section \ref{S5} are obtained in \cite[\S4]{JLar}. Also,  in \cite[Theorem~1.2]{Jaut}, we  prove that arithmetically hyperbolic projective varieties have only finitely many automorphisms,  and thereby obtain the arithmetic analogue of  Theorem \ref{thm:aut_end}.(2). We push these arithmetic results further in \cite{JXie}, and also investigate ``pseudo'' versions of the results of this paper in \emph{loc. cit.}.

 \subsection{Outline of paper} 
In Section \ref{S2} and Section \ref{S3} we gather some well-known results. For instance, we introduce the notion of   groupless varieties, and note that  a proper groupless variety has a countable discrete group of automorphisms. We combine this with a theorem of Hwang-Kebekus-Peternell to  prove that  the scheme $\underline{\mathrm{Sur}}(Y,X)$ parametrizing surjective morphisms from a projective variety $Y$ to a projective groupless variety $X$ over $k$ is a countable union of zero-dimensional projective schemes over $k$; see Theorem  \ref{thm:HKP}. Furthermore, 
in Section \ref{S3} we explore basic properties of projective varieties with no rational curves. We show that   finite type  components of certain $\Hom$-schemes of such varieties are proper, and that the ``evaluation maps'' defined on these $\Hom$-schemes are finite morphisms; see Corollary \ref{cor:crit_for_qf}. 

In Section \ref{section:boundedness} we study various (new) notions of boundedness and we explore   some relations between boundedness and  grouplessness.
Then, 
 in Section \ref{S5} we show that algebraic hyperbolicity, Kobayashi hyperbolicity, and all the various notions of   boundedness introduced in Section \ref{section:boundedness} behave similarly along finite    maps. Finally, the finiteness of the set of dominant rational maps, surjective endomorphisms and automorphisms of a bounded scheme are proven in Section \ref{S6}. 
 
 The geometricity of algebraic hyperbolicity (as predicted by the Green--Griffiths--Lang conjecture) is verified in Section \ref{S7}. In fact, we also prove that  every notion of boundedness introduced in this paper is ``geometric'' (i.e., persists over any algebraically closed field extension of the base field). 
 
 In Section \ref{S8} we use a Bertini-type argument to show that finiteness of pointed Hom-sets from curves implies finiteness of pointed Hom-sets from varieties; see Theorem \ref{thm:1m_is_nl} for a precise statement.  Similarly, in Section \ref{S9} we prove that boundedness of Hom-schemes from curves implies boundedness of Hom-schemes from varieties using a specialization argument (Theorem \ref{thm:1_implies_n}). As an application of this result,  we deduce that projective algebraically hyperbolic schemes are bounded (Theorem \ref{thm:1_implies_n}). This latter result implies, in particular,  that all the properties proven for bounded schemes in Sections \ref{section:boundedness}, \ref{S5} and \ref{S6} hold for algebraically hyperbolic schemes. 
 
We  prove all the results stated in the introduction in  Section \ref{S10}. In Section \ref{section:conjectures} we conclude the paper with  several conjectures related to Demailly's and Green--Griffiths--Lang's conjectures. These conjectures relate the various notions of boundedness introduced in this paper.

\begin{ack} We are grateful to Martin Olsson for helpful discussions and for suggesting we use   the moduli-theoretic arguments  employed in Section \ref{S7}. We are grateful to Jason Starr for telling us about the work of Hwang-Kebekus-Peternell which we used to prove Theorem \ref{thm:HKP}. We are grateful to Daniel Litt for many helpful discussions on purity.  We thank Eric Riedl for a helpful discussion in Montreal on Theorem \ref{thm:boundedness_to_uniform}. We are grateful to the referee for a careful reading of our paper, and for several helpful comments and suggestions (especially Remark \ref{referee}). We thank  Jarod Alper, Jean-Pierre Demailly, and Johan de Jong for  helpful discussions.
This research was supported through the programme  ``Oberwolfach Leibniz Fellows'' by the Mathematisches Forschungsinstitut Oberwolfach in 2018. The first named author gratefully acknowledges support from SFB Transregio/45.  The second named author is partially supported by a grant from the 
Simons Foundation/SFARI (522730, LK). 
\end{ack}

\begin{con}
Throughout this paper, we let $k$ be an algebraically closed field of characteristic zero. A variety over $k$ is a finite type separated reduced $k$-scheme.
\end{con}

\section{Grouplessness} \label{S2}
A variety is ``groupless'' if it does not admit any non-trivial map from an algebraic group.   The precise definition reads as follows.
\begin{definition}\label{defn:groupless}
A finite type scheme $X$ over $k$ is \emph{groupless (over $k$)} if, for every finite type connected group scheme $G$ over $k$, every morphism of $k$-schemes $G\to X$ is constant.
\end{definition}

Note that Kov\'acs \cite{KovacsSubs} and Kobayashi \cite[Remark~3.2.24]{Kobayashi} refer to groupless varieties as being ``algebraically hyperbolic'', and Hu--Meng--Zhang refers to groupless varieties as being ``algebraically Lang hyperbolic'' \cite{ShangZhang}. We  avoid this unfortunate mix of terminology, and only use the term ``algebraically hyperbolic'' in the sense of Demailly (Definition \ref{def:alg_hyp}).

The main result of this section is that the moduli space of surjective maps from a given projective variety to a given groupless projective variety is zero-dimensional; see Theorem \ref{thm:HKP} for a precise statement. We also take the opportunity to prove certain basic properties of groupless varieties.

\begin{lemma}\label{lem:ge} Let $S$ be an integral variety over $k$ with function field $K$. Let $K(S)\subset L$ be an algebraically closed field extension.
Let $X\to S$ be a morphism of varieties over $k$. Suppose that the set of $s$ in $S(k)$ such that   $X_s$ is groupless over $k$ is Zariski-dense in $S$. Then $X_{L}$ is groupless over $L$. 
\end{lemma}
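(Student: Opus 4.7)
The plan is to argue by contradiction via a standard spreading-out argument. Assume $X_L$ is not groupless; then there exist a finite type connected group scheme $G$ over $L$, a morphism $f\colon G\to X_L$ of $L$-schemes, and two $L$-points $g_1,g_2\in G(L)$ with $f(g_1)\neq f(g_2)$ in $X_L$.

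After replacing $S$ by a non-empty affine open $\Spec R$, the field $L$ is the filtered colimit of its finitely generated $R$-subalgebras, and standard limit arguments (EGA IV, \S8) produce such a subalgebra $B\subset L$ together with: a smooth finite type group scheme $\mathcal{G}\to\Spec B$ base-changing to $G$; sections $\sigma_1,\sigma_2\colon\Spec B\to\mathcal{G}$ base-changing to $g_1,g_2$; and a $B$-morphism $\mathcal{F}\colon\mathcal{G}\to X_B:=X\times_S\Spec B$ base-changing to $f$. After shrinking $\Spec B$ (i.e., inverting some nonzero element of $B$) I may assume that $\mathcal{G}$ has geometrically connected fibres: the relative identity component $\mathcal{G}^0$ is open in $\mathcal{G}$ and equals $\mathcal{G}$ over $\Spec L$ since $G$ is connected, so its complement has image a proper closed subset of $\Spec B$. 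Separatedness of $X_B\to\Spec B$ (which holds since $X$ is a variety) implies that the equalizer of $\mathcal{F}\circ\sigma_1$ and $\mathcal{F}\circ\sigma_2$ is closed in $\Spec B$ and, by construction, does not contain the generic point, so shrinking $\Spec B$ once more I may assume that $\mathcal{F}\circ\sigma_1,\mathcal{F}\circ\sigma_2\colon\Spec B\to X_B$ are pointwise distinct sections.

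The decisive point is that $B$ is now finitely generated over $k$. The structure morphism $\pi\colon\Spec B\to\Spec R=S$ is dominant, so by Chevalley's theorem its image contains a dense open $V\subset S$. The hypothesis provides a Zariski dense set of $s\in V(k)$ for which $X_s$ is groupless over $k$; for any such $s$, the fibre $\pi^{-1}(s)$ is a non-empty finite type $k$-scheme and hence contains a $k$-point $b$ by the Nullstellensatz. Then $\mathcal{F}_b\colon\mathcal{G}_b\to X_s$ is a morphism from a finite type connected group scheme over $k$ into the groupless variety $X_s$, hence constant. But this forces $\sigma_1(b)$ and $\sigma_2(b)$ to have the same image in $X_s$, contradicting the pointwise distinctness of the two sections.

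The main obstacle is the spreading-out step, where one must simultaneously descend the group scheme $G$, the morphism $f$, the two distinguishing points $g_1,g_2$, and the geometric connectedness of fibres to a single finitely generated $k$-subalgebra $B\subset L$. Each ingredient is handled by EGA IV, \S8 together with openness of appropriate loci, but their coordination is where the care lies.
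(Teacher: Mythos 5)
Your proof is correct and takes essentially the same spreading-out-and-specialization approach as the paper's. The only real difference is in the endgame: you witness non-constancy by two sections $\sigma_1,\sigma_2$ and reach a contradiction at a single special $k$-point, whereas the paper spreads out over a model $U$ of a finitely generated subfield and deduces that constancy on a dense set of closed fibres forces constancy of the generic fibre; your variant makes that final step slightly more explicit but is not a different method.
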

\begin{proof}  Suppose that $X_{L}$ is not groupless. Then, we may choose  a $K$-finitely generated subfield $K(S)\subset K\subset L$,   a finite type connected group scheme $G$ over $K$, and a   non-constant morphism $G\to X_K$. Let $U$ be an integral variety with $K(U) = K$ and let $U\to S$ be a smooth dominant morphism of varieties over $k$ extending the inclusion $K(S)\subset K$. Let $\mathcal{G}$ be a finite type geometrically connected group scheme over $U$, and let $\mathcal{G}\to X\times_S U$  be a morphism of $U$-schemes which extends the morphism    $G\to X_K$ on the generic fibre. Our assumption that the set of $s$ in $S(k)$ with $X_s$ groupless is Zariski dense in $S$ implies that  the set of $s$ in $U(k)$ such that   $X_s$ is groupless is Zariski-dense in $U$. In particular, for a dense set of  $s$ in $U(k)$,  the morphism $\mathcal{G}_s\to (X\times_S U)_s = X_s$ is constant. This implies that the morphism $G\to X_K$ is constant, contradicting our assumption. We conclude that  $X_{L}$  is groupless over $L$.
\end{proof}

\begin{lemma}[Grouplessness is a geometric property]\label{lem:gr} Let $k\subset L$ be an extension of algebraically closed fields of characteristic zero and $X$ a finite type scheme over $k$. If $X$   is groupless over $k$, then $X_L$ is groupless over $L$.
\end{lemma}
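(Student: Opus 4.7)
The plan is to deduce this directly from the preceding Lemma~\ref{lem:ge} by applying it to the constant family $X \times_k S \to S$ for a suitably chosen $S$. First I would dispose of the trivial case $L = k$. Otherwise, since $k$ is algebraically closed, any element of $L \setminus k$ is transcendental over $k$; pick such a $t$ and set $S = \mathbb{A}^1_k$, so that $K(S) = k(t)$ embeds into $L$, and $L$ remains an algebraically closed field extension of $K(S)$.

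Next, consider the projection $p \colon X \times_k S \to S$. For every closed point $s \in S(k)$, the fiber $p^{-1}(s)$ is (canonically isomorphic to) $X$, which is groupless over $k$ by hypothesis. Hence the set of $s \in S(k)$ such that the fiber $p^{-1}(s)$ is groupless is all of $S(k)$, which is trivially Zariski-dense in $S$. Applying Lemma~\ref{lem:ge} to the morphism $p$ and the algebraically closed extension $K(S) \subset L$ then yields that $(X \times_k S)_L = X_L$ is groupless over $L$, which is exactly what we want.

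The only mild bookkeeping point is that Lemma~\ref{lem:ge} is phrased for morphisms of varieties (i.e.\ separated reduced finite type $k$-schemes), while Lemma~\ref{lem:gr} allows $X$ to be an arbitrary finite type scheme over $k$. This is not a real obstacle: in characteristic zero every finite type connected group scheme is smooth, hence reduced, so any morphism from such a group scheme to $X$ (or to $X_L$) factors through the reduction, and grouplessness is therefore equivalent for $X$ and $X_{\mathrm{red}}$. After passing to the reduction and to connected components, one is in the setting of Lemma~\ref{lem:ge} and the argument above applies verbatim. I do not anticipate any deeper difficulty: the content of the statement is really packaged inside Lemma~\ref{lem:ge}, and the task here is only to recognize the right family to feed into it.
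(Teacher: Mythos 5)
Your argument is correct and is essentially the paper's: the paper also deduces Lemma~\ref{lem:gr} directly from Lemma~\ref{lem:ge}, but it simply takes $S=\Spec k$ (so $K(S)=k\subset L$), which makes the whole proof a one-liner and avoids the $\mathbb{A}^1_k$ detour and the case split on $L=k$. Your closing remark about reducing to $X_{\mathrm{red}}$ (since connected finite type group schemes in characteristic zero are smooth, hence reduced) is a legitimate bookkeeping point that the paper glosses over, and it is handled correctly.
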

\begin{proof} This follows from Lemma \ref{lem:ge} with $S=\Spec k$.
\end{proof}

\begin{remark}
A more general ``generizing'' property for grouplessness can be proven using non-archimedean methods; see  \cite[Theorem~1.3]{JV};
\end{remark}

 \begin{lemma}\label{lem:groupless}
 Let  $X$  be a finite type scheme over $k$. The following statements are equivalent.
 \begin{enumerate}
 \item  The finite type scheme $X$ is groupless over $k$.
 \item  Every morphism to $X$ from either  $\mathbb{G}_{m,k}$ or an abelian variety over $k$ is constant.
 \end{enumerate}
 \end{lemma}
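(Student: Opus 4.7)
The forward implication $(1)\Rightarrow(2)$ is immediate since $\Gm$ and abelian varieties over $k$ are connected finite type group schemes. For $(2)\Rightarrow(1)$, let $f\colon G\to X$ be a morphism from a connected finite type group scheme $G$ over $k$. As $\mathrm{char}(k)=0$, Cartier's theorem gives that $G$ is smooth, and Chevalley's structure theorem produces an exact sequence
\[
1\longrightarrow L\longrightarrow G\longrightarrow A\longrightarrow 1
\]
with $L$ a connected linear algebraic group over $k$ and $A$ an abelian variety. The plan is to show in turn that (a) morphisms $\Ga\to X$ are constant, (b) morphisms $L\to X$ are constant, and (c) $f$ descends to a morphism $\bar f\colon A\to X$, to which the hypothesis applies.

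For (a): given $h\colon\Ga\to X$, the restriction $h|_{\Gm}$ is constant at some $x_0\in X(k)$ by hypothesis $(2)$. Since $X$ is of finite type over the algebraically closed field $k$, the set $\{x_0\}$ is closed in $X$; hence $h^{-1}(\{x_0\})$ is a closed subscheme of $\Ga$ whose underlying space contains the dense subspace $|\Gm|$, and so equals $|\Ga|$. Reducedness of $\Ga$ then forces $h^{-1}(\{x_0\})=\Ga$, so $h$ factors through $\{x_0\}$.

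Step (b) is the main technical obstacle, since a naive induction on $\dim L$ fails for semisimple groups such as $\mathrm{SL}_2$ which have no nontrivial proper connected normal subgroup. The plan is to use the Levi decomposition in characteristic zero, $L\cong L_0\ltimes R_u(L)$ as varieties, combined with the Bruhat big-cell description of the reductive factor $L_0$, to produce a dense open subscheme $V\subset L$ isomorphic as a $k$-variety to $\Gm^r\times\Ga^s$ for some $r,s\geq 0$ (using also that $R_u(L)\cong\Ga^{\dim R_u(L)}$ as a variety via the algebraic exponential). A morphism $\Gm^r\times\Ga^s\to X$ is constant: inductively fixing all coordinates but one and applying either $(2)$ or step (a), every $k$-point is sent to the same point $x_0\in X(k)$; since $k$-points are closed and dense in $\Gm^r\times\Ga^s$ and $\{x_0\}$ is closed in $X$, reducedness then forces the morphism to be constant. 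Applying this reasoning to every translate $gV$ (each isomorphic as a variety to $V$) and using that any two dense open subsets of the irreducible scheme $L$ meet---so the constant values must coincide---one concludes that $f|_L$ is constant.

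For (c): for each $g\in G(k)$ the morphism $L\to X$, $l\mapsto f(gl)$, is constant by (b) with value $f(g)$, so $f$ is invariant under right translation by $L$ and descends along the faithfully flat quotient $\pi\colon G\to A$ to a morphism $\bar f\colon A\to X$. By the abelian-variety clause of $(2)$, $\bar f$ is constant, and hence so is $f$. The only genuinely substantive input in the entire plan is the big-cell structure of connected linear algebraic groups in characteristic zero used in step (b); everything else is either the standard reduced-preimage principle or a translation argument.
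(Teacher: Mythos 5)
Your proof is correct, and its overall skeleton coincides with the paper's: Chevalley's theorem to split $G$ into a linear kernel and an abelian quotient, reduction of unipotent pieces to $\Ga$ and thence to $\Gm$ by density and reducedness, and descent of $f$ along the kernel so that the abelian-variety clause of $(2)$ finishes the job. Where you genuinely diverge is in the treatment of the linear part $L$. You use the Levi decomposition together with the Bruhat big cell to exhibit a dense open $V\subset L$ isomorphic as a variety to $\Gm^r\times\Ga^s$, prove constancy on $V$ by the coordinate-line argument, and then propagate it to all of $L$ via translates of $V$. The paper instead first kills the unipotent radical and then observes that the resulting reductive group is the union of its Borel subgroups; each Borel is solvable, hence as a variety a product of a torus and an affine space, and since all Borels contain the identity, constancy on each of them immediately gives constancy on all $k$-points of $L$. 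The paper's route is a little more economical (no big cell, no argument about overlapping dense opens), while yours makes explicit which chart of $L$ carries the $\Gm^r\times\Ga^s$ structure and correctly diagnoses why a naive induction on normal subgroups would fail for semisimple groups. Both treatments leave the final descent step at a comparable level of detail, and your justification --- that the two pullbacks to $G\times L$ agree because they agree on the dense set of $k$-points of a reduced finite type scheme --- is the right one.
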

 \begin{proof}
 That $(1)$ implies $(2)$ is clear. The other implication is a consequence of the structure theory of connected finite type (smooth quasi-projective geometrically connected) group schemes over $k$ \cite{ConradChevalley}. Indeed, assume $(2)$ holds. Let $G$ be a connected finite type group scheme over $k$. Let $H$ be the unique normal connected affine (closed) subgroup of $G$ such that $G/H$ is an abelian variety. Since any morphism $G/H\to X$ is constant, it suffices to show that any morphism $H\to X$ is constant. Let $U\subset H$ be the unipotent radical. Since any morphism $\mathbb{G}_{m,k}\to X$ is constant, we see that every morphism $\mathbb{G}_{a,k}\to X$ is constant. Therefore,  any morphism $U\to X$ is constant. Thus, we may replace by $H$ by $H/U$, so that $H$ is reductive.  However, since $H$ is the union of its Borel subgroups, we may and do assume that $H$ is a solvable group in which case it is clear that $H\to X$ is constant by $(2)$.
 \end{proof}

\begin{remark}
The proof of Lemma \ref{lem:groupless} shows that a complex analytic space $X$ is Brody hyperbolic if and only if, for every finite type connected group scheme $G$ over $\CC$, every holomorphic map $G^{\an}\to X$ is constant. (Similar statements are  true in a non-archimedean setting; see \cite{JV}.) In particular, one could refer to Brody hyperbolicity as ``$\CC$-analytic grouplessness''. Similarly, one could also refer to groupless varieties as ``algebraically-Brody hyperbolic varieties''.  
\end{remark}

\begin{lemma}\label{lem:groupless_for_proper}
Let $X$ be a proper variety over $k$. Then $X$ is groupless over $k$ if and only if, for every abelian variety $A$ over $k$, every morphism $A\to X$ is constant.
\end{lemma}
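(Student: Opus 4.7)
The plan is as follows. The forward implication is immediate, since every abelian variety over $k$ is a connected finite type group scheme over $k$, so grouplessness of $X$ forces every morphism $A \to X$ from an abelian variety $A$ to be constant. For the converse, I will assume that every morphism from an abelian variety over $k$ to $X$ is constant, and deduce that $X$ is groupless. By Lemma \ref{lem:groupless}, it suffices to prove that every morphism $\mathbb{G}_{m,k} \to X$ is constant, and I will argue this by contradiction.

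Suppose then that $f \colon \mathbb{G}_{m,k} \to X$ is non-constant. The first step is to upgrade $f$ to a non-constant morphism $\bar f \colon \PP^1_k \to X$: since $\PP^1_k$ is the smooth projective completion of $\mathbb{G}_{m,k}$, missing only the two closed points $0$ and $\infty$, and since $X$ is proper over $k$, the valuative criterion of properness applied at the local rings $\mathcal{O}_{\PP^1_k, 0}$ and $\mathcal{O}_{\PP^1_k, \infty}$ yields a unique extension $\bar f \colon \PP^1_k \to X$. This extension remains non-constant because its restriction to the dense open $\mathbb{G}_{m,k}$ coincides with $f$.

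Next I would fix any elliptic curve $E$ over $k$ together with a non-constant morphism $g \colon E \to \PP^1_k$; such a $g$ always exists, for instance the degree-two hyperelliptic quotient by $[-1]_E$, or the morphism attached to the complete linear system $|2[O_E]|$. Since $g$ is surjective and $\bar f$ is non-constant, the composition $\bar f \circ g \colon E \to X$ is non-constant. This is a non-constant morphism from an abelian variety to $X$, contradicting the hypothesis; hence every morphism $\mathbb{G}_{m,k} \to X$ is constant, and Lemma \ref{lem:groupless} concludes that $X$ is groupless over $k$. There is no serious obstacle here: properness of $X$ enters only to extend $f$ across the two boundary points of $\mathbb{G}_{m,k}$ in $\PP^1_k$, and the remaining ingredient is the classical existence of a degree-two morphism $E \to \PP^1_k$ from an elliptic curve.
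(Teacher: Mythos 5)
Your proposal is correct and follows essentially the same route as the paper: reduce to morphisms from $\mathbb{G}_{m,k}$ via Lemma \ref{lem:groupless}, extend to $\mathbb{P}^1_k$ by properness, and precompose with a surjection $E\to\mathbb{P}^1_k$ from an elliptic curve to force constancy. The only difference is cosmetic (you phrase it as a contradiction and spell out the valuative-criterion extension in more detail).
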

\begin{proof}
Suppose that, for every abelian variety $A$ over $k$, every morphism $A\to X$ is constant. To show that $X$ is groupless, it suffices to show that every morphism $\mathbb{G}_{m,k}\to X$ is constant (Lemma \ref{lem:groupless}). However, any such map extends to a morphism $\mathbb{P}^1_k\to X$. Let $E$ be an elliptic curve over $k$ and let $E\to \mathbb{P}^1_k$ be a surjective morphism. Then the composed morphism $E\to \mathbb{P}^1_k\to X$ is constant (by assumption), so that the morphism $\mathbb{P}^1_k\to X$ is constant. This proves the lemma.
\end{proof}

If $X$ and $Y$ are projective schemes over $k$, then the functor $\underline{\Hom}_k(X,Y)$ parametrizing morphisms $X\to Y$ is representable by  a countable disjoint union of quasi-projective schemes over $k$ (\cite[Section 4.c, pp. 221-19 -- 221-20]{Groth-FGA}). 
If $X$ is a projective scheme over a field $k$, then the functor $ \mathrm{Aut}_{X/k}$ parametrizing automorphisms of $X$ over $k$ is representable by a locally finite type separated group scheme over $k$ (which we also denote by $ \mathrm{Aut}_{X/k}$). 
If $X$ is a projective groupless scheme over $k$, then   $ \mathrm{Aut}_{X/k}$ is a zero-dimensional scheme, as we show now.  

 \begin{lemma}\label{lem:gr_has_discrete_aut}
 Let $X$ be a proper   variety over $k$. If $X$ is groupless, then $\mathrm{Aut}^0_{X/k}$ is trivial. In particular, the group $\mathrm{Aut}(X)$ is a countable discrete group.
 \end{lemma}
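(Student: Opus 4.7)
The plan is to show $\mathrm{Aut}^0_{X/k}$ is trivial by first verifying its only $k$-point is the identity and then invoking Cartier's smoothness theorem in characteristic zero. As recalled immediately before the lemma, $\mathrm{Aut}_{X/k}$ is a locally of finite type separated $k$-group scheme; its identity component $\mathrm{Aut}^0_{X/k}$ is therefore a connected, finite type group scheme over $k$ (the identity component of a locally of finite type group scheme over a field is quasi-compact).

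The key step is to exploit grouplessness via orbit maps. For each $k$-point $x \in X(k)$, the orbit morphism
\[
\mathrm{Aut}^0_{X/k} \longrightarrow X, \qquad g \longmapsto g \cdot x
\]
sends a connected finite type $k$-group scheme into $X$, hence is constant by grouplessness, with value $x$ (obtained by evaluating at the identity). Thus every $g \in \mathrm{Aut}^0_{X/k}(k)$ fixes $x$. Since $k$ is algebraically closed and $X$ is a finite type reduced $k$-scheme, $X(k)$ is Zariski dense in $X$, so any such $g$ agrees with $\mathrm{id}_X$ on a dense subset of a reduced scheme, hence coincides with $\mathrm{id}_X$. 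This shows $\mathrm{Aut}^0_{X/k}(k) = \{e\}$. Cartier's theorem then upgrades this to the scheme itself: in characteristic zero a finite type group scheme over $k$ is smooth, so $\mathrm{Aut}^0_{X/k}$ is a smooth, connected, finite type $k$-scheme with a single $k$-point, which forces it to be $\Spec k$.

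For the ``in particular'' statement, since $\mathrm{Aut}^0_{X/k}$ is trivial, $\mathrm{Aut}_{X/k}$ is a disjoint union of reduced $k$-points, and so is discrete. Countability is inherited from the fact, recalled earlier in the paper, that $\underline{\Hom}_k(X,X)$ is a countable disjoint union of quasi-projective schemes (of which $\mathrm{Aut}_{X/k}$ is an open subscheme); a zero-dimensional quasi-projective $k$-scheme has only finitely many $k$-points, so the total number of components is countable. I do not foresee a serious obstacle: the only mildly subtle input is Cartier's smoothness theorem, without which one would merely conclude that $\mathrm{Aut}^0_{X/k}$ is infinitesimal rather than trivial.
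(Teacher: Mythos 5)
Your proposal is correct and follows essentially the same route as the paper: both reduce to the constancy of the orbit map $g\mapsto gx$ forced by grouplessness, concluding that $\mathrm{Aut}^0_{X/k}$ acts trivially and hence is trivial. You are in fact slightly more careful than the paper in passing from $\mathrm{Aut}^0_{X/k}(k)=\{e\}$ to triviality of the scheme via Cartier's theorem (the paper's only extra ingredient is citing Artin to know $\mathrm{Aut}^0_{X/k}$ is a finite type group scheme for merely proper, not necessarily projective, $X$).
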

 \begin{proof} Let $G=\mathrm{Aut}^0_{X/k}$. This is a finitely presented connected group algebraic space over $k$; see  \cite[Theorem~6.1]{ArtinI}.
Thus, it is a finite type connected group scheme over $k$ (see \cite[Tag~06E9]{stacks-project}).
Since $X$ is groupless and $G$ is a finite type connected group scheme over $k$,
 for $x$ in $X$, the morphism $G\to X$  defined by $g\mapsto gx$ is constant. In other words, since every $g$ in $G$ acts trivially, we see that $G$ is the trivial group.
 \end{proof}

We can combine Lemma \ref{lem:gr_has_discrete_aut} with a   theorem of Hwang-Kebekus-Peternell to get a  stronger conclusion. 
  To state it, for    $Y$ is a projective scheme over $k$, recall that the functor $\underline{\mathrm{Sur}}(Y,X)$ parametrizing surjective morphisms from $Y$ to $X$ is representable by a locally finite type separated scheme over $k$. Indeed, it is an open subscheme of $\underline{\Hom}_k(Y,X)$.   
\begin{theorem}[Hwang-Kebekus-Peternell]\label{thm:HKP} Let $X$ be a projective groupless variety over $k$. Let $Y$ be a normal projective variety over $k$. Then  $\underline{\mathrm{Sur}}(Y,X)$   is a countable union of zero-dimensional projective schemes  over $k$.
\end{theorem}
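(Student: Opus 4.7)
The plan is to combine the rigidity theorem of Hwang--Kebekus--Peternell on families of surjective morphisms between normal projective varieties with Lemma \ref{lem:gr_has_discrete_aut}, which says that a proper groupless variety has trivial identity component of its automorphism scheme.

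First I would reduce the theorem to a pointwise dimension statement. Since $\underline{\Hom}_k(Y,X)$ is representable by a countable disjoint union of quasi-projective schemes over $k$, and $\underline{\mathrm{Sur}}(Y,X)$ is an open subscheme thereof, $\underline{\mathrm{Sur}}(Y,X)$ itself decomposes as a countable disjoint union of quasi-projective schemes over $k$. A zero-dimensional quasi-projective scheme over $k$ is a finite disjoint union of spectra of local Artin $k$-algebras, hence projective. It therefore suffices to show that every connected component $H$ of $\underline{\mathrm{Sur}}(Y,X)$ is zero-dimensional.

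Fix such a component $H$ containing some surjective morphism $f_{0} \colon Y \to X$. The rigidity theorem of Hwang--Kebekus--Peternell (in its sharpest form) says that every connected component of $\underline{\mathrm{Sur}}(Y,X)_{\mathrm{red}}$ is a torsor under a closed connected subgroup of $\mathrm{Aut}^{0}_{X/k}$, which acts by post-composition; in particular $\dim H \leq \dim \mathrm{Aut}^{0}_{X/k}$. Now $X$ is projective and groupless, so Lemma \ref{lem:gr_has_discrete_aut} gives $\mathrm{Aut}^{0}_{X/k} = 1$, whence $\dim H = 0$, as required.

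The main obstacle will be locating and applying the precise form of the Hwang--Kebekus--Peternell rigidity statement, and in particular verifying that the Kodaira-dimension hypothesis appearing in their theorem is automatic for a projective groupless $X$. The latter reduces to the observation that grouplessness forbids any non-constant morphism $\mathbb{P}^{1}_{k} \to X$ (such a morphism would restrict non-trivially to $\mathbb{G}_{m,k} \subset \mathbb{P}^{1}_{k}$, contradicting Lemma \ref{lem:groupless}), so $X$ is not uniruled; by Boucksom--Demailly--P\u{a}un--Peternell this forces non-negative Kodaira dimension (applied to a resolution of $X$, which remains non-uniruled since uniruledness is a birational invariant of smooth projective varieties). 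Once this rigidity input is in hand, the assembly into a countable union of zero-dimensional projective schemes is immediate from the first paragraph.
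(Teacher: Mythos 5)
Your overall strategy---HKP rigidity plus Lemma \ref{lem:gr_has_discrete_aut}---is the same as the paper's, and the reduction in your first paragraph (decompose $\underline{\mathrm{Sur}}(Y,X)$ into countably many quasi-projective pieces and show each component is zero-dimensional, hence finite, hence projective) is fine. However, the form of the Hwang--Kebekus--Peternell theorem you invoke is not the one they prove. Their Theorem~1.2 does not say that a component of $\underline{\mathrm{Sur}}(Y,X)_{\mathrm{red}}$ is a torsor under a closed connected subgroup of $\Aut^{0}_{X/k}$ acting by post-composition. It says that, for $X$ containing no rational curves and $f\colon Y\to X$ surjective, there is a factorization $Y\xrightarrow{\;g\;}Z\xrightarrow{\;\tau\;}X$ with $\tau$ finite such that the component $\underline{\Hom}_{f}(Y,X)$ is the image of $\Aut^{0}_{Z/k}$ under $\sigma\mapsto \tau\circ\sigma\circ g$. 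The group you must kill is therefore $\Aut^{0}_{Z/k}$ of an \emph{intermediate finite cover}, not $\Aut^{0}_{X/k}$, and these can differ in general. The repair is one line, and it is exactly what the paper does: since $\tau\colon Z\to X$ is finite and $X$ is groupless, $Z$ is groupless (a non-constant morphism $G\to Z$ from a connected group scheme would compose with the finite morphism $\tau$ to a non-constant morphism $G\to X$), so Lemma \ref{lem:gr_has_discrete_aut} applies to $Z$ and the component is a single point.

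Your verification of the hypothesis of the HKP theorem also contains an error. Boucksom--Demailly--P\u{a}un--Peternell prove that a smooth projective variety is uniruled if and only if its canonical class is \emph{not pseudo-effective}; deducing non-negative Kodaira dimension from pseudo-effectivity of the canonical class is the non-vanishing conjecture and is open. So the step ``non-uniruled, hence $\kappa\geq 0$ by BDPP'' does not work. Fortunately the detour is unnecessary: as you observe, grouplessness directly forbids non-constant morphisms $\mathbb{P}^{1}_{k}\to X$, and the absence of rational curves (a fortiori non-uniruledness) is precisely the hypothesis under which the paper invokes HKP Theorem~1.2; no Kodaira-dimension input is needed.
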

\begin{proof}  Let $f:Y\to X$ be a surjective morphism. Let $\Hom_f(Y,X)$ be the connected component of $\underline{\Hom}(Y,X)$ containing $f$.
By  \cite[Theorem~1.2]{HKP}, as $X$ does not have any rational curves, there exists a factorization $Y\to Z\to X$ with $Z\to X$ a finite morphism, and a surjective morphism $\Aut^0_{Z/k} \to \underline{\Hom}_f(Y,X)$. Now, since $X$ is groupless, it follows that $Z$ is groupless, and thus $\Aut_{Z/k}^0$ is trivial. This implies that $\underline{\Hom}_f(Y,X)$ is a point. Therefore, since $\underline{\Hom}_k(Y,X)$ is a countable union of finite type schemes over $k$, this  concludes the proof.
\end{proof}

\begin{remark}
The ``converse'' of the theorem of Hwang-Kebekus-Peternell is not true. 
   Let $C$ be a smooth projective  curve of genus two, and let $X$ be the blow-up of $C\times_k C$ in a point. Then $X$ is a  smooth projective surface of general type. Note that $X$ is not groupless (as it contains a rational curve).
However, for any projective variety $Y$ over $k$, the set of surjective morphism  $Y\to X$ is finite.
\end{remark}

\section{Projective varieties with no rational curves} \label{S3}
 It turns out that, if $Y$ is a projective scheme over $k$ and $X$ is a projective scheme over $k$ with no rational curves, then the scheme $\underline{\mathrm{Hom}}_k(Y,X)$ is a countable union of \emph{projective} schemes; this is a well-known ingredient in Mori's ``bend-and-break''.
In this section we   collect some related  well-known results.
\begin{definition}\label{def:purity}
A variety $X$ over $k$ is \emph{pure (over $k$)} if, for every smooth variety $T$ over $k$ and every dense open $U\subset T$ with $\textrm{codim}(T\setminus U)\geq 2$, we have that every morphism $U\to X$ extends (uniquely) to a morphism $T\to X$.
\end{definition}

The notion of pure variety is also used (and extended) in \cite{vBJK} and \cite[Section~7.2]{JBook}. Note that a \emph{complete Kobayashi hyperbolic} variety over $\mathbb{C}$ is pure over $\mathbb{C}$; see \cite[Corollary~6.2.4]{Kobayashi}.

  \begin{lemma} \label{lem:rat_is_mor} 
Let $X$ be a proper pure variety over $k$. Let $Y$ be a smooth   variety over $k$. Then every rational dominant map from $Y$ to $X$ extends to a morphism $Y\to X$.     
\end{lemma}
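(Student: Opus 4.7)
The plan is to reduce the problem to the purity hypothesis by showing that the indeterminacy locus of the rational map has codimension at least two in $Y$. This is a standard consequence of the valuative criterion of properness combined with the smoothness of $Y$.

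First, I would let $f: Y \dashrightarrow X$ be the given rational dominant map and let $U \subseteq Y$ denote its maximal domain of definition, which is a dense open subscheme of $Y$. The first task is to show that the codimension of $Y \setminus U$ in $Y$ is at least $2$. To this end, I would take any codimension one point $y \in Y$ and observe that, since $Y$ is smooth over $k$, the local ring $\mathcal{O}_{Y, y}$ is a regular local ring of dimension one, hence a discrete valuation ring with fraction field $K(Y)$. The restriction of $f$ to the generic point yields a morphism $\Spec K(Y) \to X$, and since $X$ is proper over $k$, the valuative criterion of properness produces a unique extension $\Spec \mathcal{O}_{Y, y} \to X$. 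Spreading this out to an open neighborhood of $y$ shows $y \in U$, so that $Y \setminus U$ contains no codimension one points.

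Having established that $\mathrm{codim}_Y(Y \setminus U) \geq 2$, I would conclude by invoking Definition \ref{def:purity}: since $X$ is pure over $k$, $Y$ is smooth, and $U \subseteq Y$ is open with complement of codimension at least two, the morphism $f|_U : U \to X$ extends (uniquely) to a morphism $Y \to X$, which is the desired extension of the original rational map.

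There is no real obstacle here; the argument is essentially a direct concatenation of the valuative criterion for proper $X$ (controlling codimension one) with the definition of purity (controlling codimensions $\geq 2$). The role of the dominance hypothesis in the statement is only to ensure that one is genuinely speaking about a rational map with dense image; it is not used in the extension argument itself.
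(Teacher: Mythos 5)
Your proof is correct and takes essentially the same route as the paper's: the paper asserts in one line that properness of $X$ allows the rational map to be defined outside a closed subset of codimension at least two and then invokes Definition \ref{def:purity}, exactly as you do. The only difference is that you spell out the standard valuative-criterion argument (using that $\mathcal{O}_{Y,y}$ is a DVR at codimension one points of the smooth variety $Y$) which the paper leaves implicit.
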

\begin{proof} Since $X$ is proper, every rational dominant map from $Y$ to $X$ can be defined on an open $U\subset Y$ with $\mathrm{codim}(Y\setminus U) \geq 2$. Therefore, 
the lemma follows from  the definition of a pure variety (Definition \ref{def:purity}).
\end{proof}

\begin{remark}\label{remark:pure} Let $X$ be a variety over $k$.  Let $k\subset L$ be a field extension with $L$ algebraically closed. 
 Then $X$ is pure over $k$ if and only if $X_L$ is pure over $L$.
This follows from a standard spreading out and specialization argument (similar to the argument in the proof of Lemma \ref{lem:ge}).  
\end{remark}

\begin{lemma}\label{lem:hartog}
Let $X\to Y$ be an affine morphism of varieties over $k$. If $Y$ is pure over $k$, then $X$ is pure over $k$.
\end{lemma}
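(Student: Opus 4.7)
The plan is to reduce the purity of $X$ to the purity of $Y$ via pullback along the affine morphism $f\colon X\to Y$. Let $T$ be a smooth variety over $k$, let $U\subset T$ be a dense open with $\mathrm{codim}(T\setminus U)\geq 2$, and let $g\colon U\to X$ be a morphism; the goal is to extend $g$ to a morphism $T\to X$. First I would apply the purity of $Y$ to $f\circ g\colon U\to Y$ to obtain an extension $h\colon T\to Y$, and then form the pullback $Z:=X\times_{Y,h}T$, which is affine over $T$ because $f$ is affine. The pair $(g,U\hookrightarrow T)$ determines, by the universal property of the fibre product, a section $\sigma\colon U\to Z$ of the projection $\pi\colon Z\to T$. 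Extending $g$ is then equivalent to extending $\sigma$ to a section $\widetilde{\sigma}\colon T\to Z$, since composing with the projection $Z\to X$ produces the desired morphism $T\to X$.

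The key step is thus an algebraic Hartogs principle: if $T$ is normal and $\pi\colon Z\to T$ is affine, every section of $\pi$ defined over a codimension-$\geq 2$ open $U\subset T$ extends uniquely to a section over $T$. To prove this, write $Z=\underline{\Spec}_{T}(\mathcal{A})$ for a quasi-coherent $\mathcal{O}_T$-algebra $\mathcal{A}$, so that $\sigma$ corresponds to an $\mathcal{O}_U$-algebra morphism $\phi\colon j^{\ast}\mathcal{A}\to \mathcal{O}_U$, where $j\colon U\hookrightarrow T$. Since $T$ is smooth, hence normal, and $\mathrm{codim}(T\setminus U)\geq 2$, algebraic Hartogs gives $j_{\ast}\mathcal{O}_U=\mathcal{O}_T$; combined with the $(j^{\ast},j_{\ast})$-adjunction this yields a natural bijection
\[
\Hom_{\mathcal{O}_T}(\mathcal{F},\mathcal{O}_T)\;\xrightarrow{\;\sim\;}\;\Hom_{\mathcal{O}_U}(j^{\ast}\mathcal{F},\mathcal{O}_U)
\]
for every quasi-coherent sheaf $\mathcal{F}$ on $T$. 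Taking $\mathcal{F}=\mathcal{A}$ produces a unique $\mathcal{O}_T$-linear lift $\widetilde{\phi}\colon \mathcal{A}\to\mathcal{O}_T$ restricting to $\phi$.

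The main point to be careful about is upgrading $\widetilde{\phi}$ from an $\mathcal{O}_T$-module map to an $\mathcal{O}_T$-algebra map. I would handle this by applying the same injectivity with $\mathcal{F}=\mathcal{A}\otimes_{\mathcal{O}_T}\mathcal{A}$ and with $\mathcal{F}=\mathcal{O}_T$: the two compositions $\widetilde{\phi}\circ\mu_{\mathcal{A}}$ and $\mu_{\mathcal{O}_T}\circ(\widetilde{\phi}\otimes\widetilde{\phi})$ coincide after restriction to $U$ because $\phi$ is multiplicative there, and the above injectivity forces them to coincide on $T$; the analogous argument with $\mathcal{F}=\mathcal{O}_T$ handles the unit. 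This yields the desired section $\widetilde{\sigma}\colon T\to Z$, and composing it with $Z\to X$ produces the extension $T\to X$ of $g$; its uniqueness is automatic since $X$ is separated. The only real difficulty is this algebra-structure bookkeeping; everything else is a direct application of the purity of $Y$ and the affineness of $f$.
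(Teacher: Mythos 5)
Your proof is correct and is exactly the argument the paper has in mind: the paper's entire proof is the one-line remark that the lemma ``is a consequence of Hartog's lemma,'' and your write-up (pull back along the extension $T\to Y$ supplied by purity of $Y$, then extend the resulting section of the affine morphism $Z\to T$ using $j_{*}\mathcal{O}_U=\mathcal{O}_T$ on the normal scheme $T$) is the standard elaboration of that remark. The care you take with the algebra structure on the extended map is a legitimate detail the paper suppresses, and your argument handles it correctly.
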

\begin{proof}
 This is a consequence of  Hartog's lemma.
\end{proof}

\begin{lemma}\label{lem:pure_means_no_rat_curve}
A proper variety $X$ over $k$ is pure if and only if it has no rational curves, i.e., every morphism $\mathbb{P}^1_k\to X$ is constant.
\end{lemma}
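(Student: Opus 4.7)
The plan is to establish the two implications separately.

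For the forward direction (purity implies no rational curves), I will proceed by contraposition. Assume $f\colon\PP^1_k\to X$ is a non-constant morphism and consider the morphism $\phi\colon\mathbb{A}^2_k\setminus\{0\}\to X$ obtained by composing the natural projection $(x,y)\mapsto [x:y]$ with $f$. Since $\mathbb{A}^2_k$ is smooth and $\{0\}$ has codimension two, purity of $X$ would provide an extension $\bar\phi\colon\mathbb{A}^2_k\to X$. Pulling back along the blow-up $\pi\colon\widetilde{\mathbb{A}^2_k}\to\mathbb{A}^2_k$ at the origin, the composition $\bar\phi\circ\pi$ is constant on the exceptional divisor $E\simeq\PP^1_k$. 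On the other hand, the universal property of the blow-up provides a natural morphism $\widetilde{\mathbb{A}^2_k}\to\PP^1_k$ restricting to the identity on $E$, and composing with $f$ produces a morphism $g\colon\widetilde{\mathbb{A}^2_k}\to X$ that agrees with $\bar\phi\circ\pi$ on the dense open $\mathbb{A}^2_k\setminus\{0\}$; by separatedness of $X$ they coincide everywhere, so $g|_E=f$ must be constant---a contradiction.

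For the backward direction, assume $X$ has no rational curves, and let $\phi\colon U\to X$ be a morphism from a dense open $U$ of a smooth variety $T$ with $\mathrm{codim}(T\setminus U)\geq 2$. Properness of $X$ guarantees that $\phi$ defines a rational map $f\colon T\dashrightarrow X$ with indeterminacy locus $Z$ of codimension $\geq 2$ in $T$. Supposing $Z\neq\emptyset$, fix a closed point $z\in Z$. I would first reduce to the surface case: cutting $T$ (which is locally quasi-projective) with $\dim T-2$ general hyperplanes through $z$ produces, by Bertini in characteristic zero, a smooth surface $S\subset T$ containing $z$. For generic $S$, the fiber over $z$ of the graph closure $\overline{\Gamma}_f\subset T\times X$---which is positive-dimensional precisely because $f$ fails to extend there---coincides with the fiber over $z$ of $\overline{\Gamma}_{f|_S}$, so that $z$ remains an indeterminacy point of $f|_S$. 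Having reduced to a smooth surface, the classical theorem of Zariski on elimination of indeterminacy resolves $f|_S$ by a finite sequence of point blow-ups $\widetilde S\to S$ above $z$, producing a morphism $\widetilde f\colon\widetilde S\to X$. The preimage of $z$ in $\widetilde S$ is a tree of smooth rational curves; if $\widetilde f$ contracted each component to a point, the entire tree would map to a single point, and $\widetilde f$ would factor through a morphism $S\to X$ defined at $z$, contradicting $z\in Z$. Hence some component maps non-constantly to $X$, producing a rational curve---the desired contradiction.

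The principal obstacle is the reduction to surfaces, where one must verify that generic $S$ preserves $z$ as an indeterminacy point of the restriction. This subtlety can be bypassed by invoking Hironaka's resolution of indeterminacy directly in arbitrary dimension to obtain a smooth proper birational modification $\widetilde T\to T$ on which $f$ becomes a morphism, and then exploiting the uniruledness of the exceptional fibers of such a modification between smooth varieties to extract a rational curve in $X$; however, the surface route has the virtue of only invoking classical results for algebraic surfaces.
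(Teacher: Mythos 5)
Your forward implication is correct, and it is essentially the paper's argument unwound: the paper shows $\PP^1_k$ is not pure via the non-extendable projection $\PP^2_k\setminus\{0\}\to\PP^1_k$ and then transports this along the finite (hence affine) map $\PP^1_k\to X$ using Lemma \ref{lem:hartog}, whereas you run the same blow-up computation directly on $\mathbb{A}^2_k\setminus\{0\}\to X$. Both are fine.

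The converse is where the issue lies. The paper does not prove this direction at all --- it cites \cite[Proposition~6.2]{GLL} --- so you are attempting strictly more, and your sketch follows the standard strategy (reduce to a surface through an indeterminacy point, resolve by point blow-ups, extract a rational curve from the exceptional tree). But the pivotal step is wrong as stated: the fiber of $\overline{\Gamma}_{f|_S}$ over $z$ cannot ``coincide with'' the fiber of $\overline{\Gamma}_f$ over $z$ in general, since $\overline{\Gamma}_{f|_S}$ is a surface and so has fibers of dimension at most one, while the fiber of $\overline{\Gamma}_f$ over $z$ may have any dimension up to $\dim T-1$. What you actually need --- and what is asserted rather than proved --- is the weaker statement that for a general complete-intersection surface $S\ni z$ the point $z$ remains an indeterminacy point of $f|_S$, i.e.\ that the closure of the graph of $f|_{S\cap U}$ still has a positive-dimensional fiber over $z$. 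This is true but requires an argument: one must control the strict transform of $S$ inside $\overline{\Gamma}_f$ (e.g.\ note that $p^{-1}(S)$ is cut out by $\dim T-2$ Cartier divisors, so all its components have dimension $\geq 2$, and then separate the components dominating $S$ from those lying over the indeterminacy locus). Your proposed detour via Hironaka plus ``uniruledness of exceptional fibers of birational modifications of smooth varieties'' does close the gap, but that input is a theorem at least as substantial as the one being proved, and its standard proofs pass through exactly the surface-cutting step in question; invoking it is really just replacing the paper's citation of \cite{GLL} with a different citation. So the converse direction of your proposal is not yet a proof: either supply the general-position argument for the surface reduction, or cite the extension result (e.g.\ \cite[Proposition~6.2]{GLL}) as the paper does.
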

\begin{proof} Let $0  = (0:0:1) \in \mathbb{P}^2(k)$.
Since the projection   $\mathbb{P}^2_k\setminus \{0\}\to \PP^1$ does not extend to a morphism $\mathbb{P}^2\to \mathbb{P}^1$, we see that $\PP^1$ is not pure. Therefore, as a non-constant morphism $\PP^1_k\to X$ is finite (hence affine), a proper variety with a rational curve is not pure by Lemma \ref{lem:hartog}. Conversely, it follows from \cite[Proposition~6.2]{GLL} that a proper variety with no rational curves is pure.  
\end{proof}

\begin{example}
If $X$ is an affine variety over $k$, then $X$ is pure. Abelian varieties over $k$ are pure. An algebraic K3 surface over $k$ is not pure (as it contains a rational curve). 
\end{example}
The relation between groupless varieties and pure varieties is provided by the following proposition.  
 \begin{proposition}\label{prop:groupless_implies_pure}
 If $X$ is a   proper groupless   variety   over $k$, then $X$ is pure over $k$. 
 \end{proposition}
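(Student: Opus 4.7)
The plan is to reduce purity to the absence of rational curves via Lemma \ref{lem:pure_means_no_rat_curve}, and then use grouplessness to rule out rational curves directly. Since $X$ is a proper variety, Lemma \ref{lem:pure_means_no_rat_curve} tells us that $X$ is pure if and only if every morphism $\mathbb{P}^1_k \to X$ is constant. So it suffices to show the latter.

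Let $f\colon \mathbb{P}^1_k \to X$ be a morphism. I would consider its restriction to the standard open subscheme $\mathbb{A}^1_k \subset \mathbb{P}^1_k$, which gives a morphism $\mathbb{A}^1_k \to X$. Since $\mathbb{A}^1_k = \mathbb{G}_{a,k}$ is a finite type connected group scheme over $k$ and $X$ is groupless over $k$, this restriction is constant, with value some point $x \in X(k)$. Then $f$ and the constant map $\mathbb{P}^1_k \to X$ with value $x$ agree on the dense open subscheme $\mathbb{A}^1_k$, so by the separatedness of $X$ (built into our convention on varieties) they agree on all of $\mathbb{P}^1_k$. Hence $f$ is constant, as required.

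There is no real obstacle here; the essential input is simply that an open subscheme of $\mathbb{P}^1_k$ carries the structure of a connected algebraic group, so grouplessness forces any morphism out of $\mathbb{P}^1_k$ to be constant on a dense open, and separatedness then propagates this to the whole projective line. (Alternatively, one could compose with a surjection $E \to \mathbb{P}^1_k$ from an elliptic curve, as in the proof of Lemma \ref{lem:groupless_for_proper}, and conclude that any $\mathbb{P}^1_k \to X$ is constant because every morphism from the abelian variety $E$ to the groupless $X$ is constant.)
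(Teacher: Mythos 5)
Your proof is correct and follows essentially the same route as the paper, which also reduces to Lemma \ref{lem:pure_means_no_rat_curve} via the observation that proper groupless varieties have no rational curves. You simply spell out the (correct) detail that grouplessness kills any $\mathbb{P}^1_k \to X$ by restricting to $\mathbb{G}_{a,k}$ and using separatedness, a step the paper leaves implicit.
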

 \begin{proof}
Since proper groupless varieties have no rational curves, the proposition follows from Lemma \ref{lem:pure_means_no_rat_curve}.
 \end{proof}
  
 Note that a smooth proper genus one curve over $k$ is pure, but not groupless. Thus, there are pure smooth projective   varieties over $k$ which are not groupless.
  
 We show now that the rigidity lemma implies that    ``evaluation maps'' restricted to closed subschemes of certain pieces of $\Hom$-schemes    are finite.
\begin{lemma}\label{lem:applying_rigidity}
Let $X$ be a proper variety over $k$ and let $Y$ be a   projective variety over $k$. Let $Z$ be a locally closed subscheme of $\underline{\Hom}_k(Y,X)$. Assume that $Z$ is proper over $k$. Then, for any $y$ in $Y(k)$, the evaluation morphism \[
Z\to X, f\mapsto f(y)
\]
is  finite.
\end{lemma}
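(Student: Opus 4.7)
The plan is to invoke Mumford's classical rigidity lemma. Since $Z$ is proper over $k$ and $X$ is separated over $k$, the evaluation morphism $\ev_y \colon Z \to X$ is automatically proper (it factors as the closed immersion $Z \hookrightarrow Z \times Y$ given by $(\mathrm{id},y)$ followed by the universal evaluation $Z \times Y \to X$). Because a proper morphism with finite fibres is finite, it suffices to show that every fibre of $\ev_y$ is zero-dimensional.

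I would argue by contradiction. Fix a $k$-point $x$ of $X$, set $F := \ev_y^{-1}(x) \subseteq Z$, and observe that $F$ is a closed subscheme of the proper scheme $Z$, hence proper over $k$. Suppose $\dim F \geq 1$ and choose a reduced, irreducible, positive-dimensional closed subscheme $F_0 \subseteq F$ (for instance an irreducible closed curve inside $F$). Since $F_0$ is reduced, connected, and proper over the algebraically closed field $k$, we have $H^0(F_0, \mathcal{O}_{F_0}) = k$, which is precisely the hypothesis needed for rigidity. Restricting the universal morphism $Y \times \underline{\Hom}_k(Y,X) \to X$ to $Y \times F_0$ produces an evaluation morphism
\[
\Phi \colon Y \times F_0 \to X, \qquad (y', f) \mapsto f(y'),
\]
and by the defining property of $F$, the restriction $\Phi|_{\{y\} \times F_0}$ is the constant morphism with value $x$.

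Mumford's rigidity lemma, applied with the proper connected factor $F_0$, then implies that $\Phi$ factors through the projection $Y \times F_0 \to Y$. Unpacking this, for all $f, f' \in F_0(k)$ and all $y' \in Y$ one has $f(y') = f'(y')$, so every $k$-point of $F_0$ parameterises the same morphism $Y \to X$. Since $\underline{\Hom}_k(Y,X)$ is separated, this forces $F_0$ to be a single reduced $k$-point, contradicting $\dim F_0 \geq 1$. The only subtlety I anticipate, and what I regard as the main point to check carefully, is invoking the correct form of the rigidity lemma (the \emph{proper} factor is $F_0$, while the other factor may be arbitrary); if $Y$ is disconnected one simply applies rigidity to the connected component of $Y$ containing $y$, which is enough to derive the contradiction. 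Everything else in the argument is routine once this setup is in place.
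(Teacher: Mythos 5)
Your proof is essentially the paper's: both reduce to showing that the fibre of $\mathrm{eval}_y$ over a point $x\in X(k)$ is finite, and both do so by applying the rigidity lemma to the evaluation morphism on $Y$ times a proper connected piece of that fibre (the paper takes a connected component $H$ of $Z\cap\underline{\Hom}_k((Y,y),(X,x))$ where you take a reduced irreducible $F_0$), concluding that all morphisms parametrised by that piece coincide. The only claim that does not hold up is your closing aside about disconnected $Y$: applying rigidity over the connected component $Y_0$ containing $y$ only shows that the $f\in F_0$ agree on $Y_0$, and they may still differ on the other components, so no contradiction follows. In fact the statement genuinely requires $Y$ connected --- e.g.\ for $Y=\{\mathrm{pt}\}\sqcup A$ with $A$ an abelian variety, $X=A$, $y$ the isolated point, and $Z\cong A$ the proper locus of morphisms sending $\mathrm{pt}$ to a fixed $x_0$ and restricting to a translation on $A$, the map $\mathrm{eval}_y$ is constant on the positive-dimensional $Z$ --- but this hypothesis is used tacitly in the paper's own proof as well, so it does not affect the comparison of the two arguments.
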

\begin{proof}
This is an application of the rigidity lemma \cite[Chapter~II]{MumAb} (cf. the argument of the proof of \cite[Corollary~5.3.4]{Kobayashi}). To be more precise, let $y$ be a point in $Y(k)$ and $x$ a point in $X(k)$. Note that the fibre over $x$ of the evaluation map $\mathrm{eval}_y:Z\to X$ defined by $\mathrm{eval}_y(f) = f(y)$ is the set $\Hom_k((Y,y),(X,x))\cap Z(k)$ of morphisms $f:Y\to X$ in $Z(k)$ with $f(y) = x$. To show that $\Hom_k((Y,y),(X,x))\cap Z(k)$ is finite, consider the closed subscheme $\underline{\Hom}_k((Y,y),(X,x))\subset \underline{\Hom}_k(Y,X)$ parametrizing maps $f:Y\to X$ with $f(y) = x$. Let $H$ be a connected component of $Z\cap\underline{\Hom}_k((Y,y),(X,x))$. Since $Z$ is proper, the scheme $H$ is proper over $k$. It suffices to show that $H(k)$ is a singleton. 

The morphism $\mathrm{eval}:Y\times H\to X$ given by $(y',f)\mapsto f(y')$ has the property that $(y,f) = x$ for all $f\in H$, i.e., it contracts $\{y\}\times H$ to a point. Thus, since $H$ is proper, the rigidity lemma implies that the morphism $\mathrm{eval}:Y\times H\to X$ factors over some morphism $g:Y\to X$, i.e., $\mathrm{eval} = g\circ \mathrm{pr}_Y$. In other words, for any $f$ in in $H$ and any $y$ in $Y$, we have that $f(y) = g(y)$. Thus, $H(k) = \{g\}$. This concludes the proof.
\end{proof}

To apply Lemma \ref{lem:applying_rigidity} we now show that finite type (separated) subschemes of $\Hom$-schemes of pure varieties are proper.
 
 \begin{proposition}\label{prop:projectivity_of_Hom}
 Let $X$ be a projective variety over $k$ which is pure over $k$. Then, for every smooth projective variety $Y$ over $k$, the locally finite type scheme $\underline{\mathrm{Hom}}_k(Y,X)$ satisfies the valuative criterion of properness over $k$. In particular, for any $P\in \QQ[t]$, the scheme $\underline{\Hom}_k^P(Y,X)$ is projective and pure over $k$.
 \end{proposition}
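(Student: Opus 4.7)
\emph{Plan.} The plan is to verify the valuative criterion of properness for $\underline{\Hom}_k(Y,X)$ by extending, for $C$ a smooth $k$-curve, a rational map from the smooth scheme $Y\times_k C$ to $X$: first over codimension-one points by properness of $X$, and then over the remaining codimension $\geq 2$ locus by purity of $X$. The ``in particular'' statement on $\underline{\Hom}_k^P(Y,X)$ will then follow by combining this with Grothendieck's representability theorem and a second application of purity.

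\emph{Verification of the valuative criterion.} Since $\underline{\Hom}_k(Y,X)$ is locally of finite type over $k$, it suffices to check the criterion for DVRs $R$ essentially of finite type over $k$; any such $R$ has the form $\mathcal{O}_{C,c}$ for a smooth connected $k$-curve $C$ and closed point $c\in C$, with fraction field $K=k(C)$. A morphism $\Spec K\to \underline{\Hom}_k(Y,X)$ corresponds to a $k$-morphism $f_K\colon Y_K\to X$, which spreads out to $f_U\colon Y\times_k U\to X$ on some dense open $U\subset C$. Since $Y\times_k C$ is smooth (in particular regular) over $k$, the local ring at each codimension-one point is a DVR, so the valuative criterion applied to the proper morphism $X\to \Spec k$ extends $f_U$ to a morphism $V\to X$ whose domain $V\subset Y\times_k C$ has complement of codimension $\geq 2$. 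Because $X$ is pure and $Y\times_k C$ is smooth over $k$, Definition~\ref{def:purity} yields a further extension $f_C\colon Y\times_k C\to X$; restricting $f_C$ along $\Spec R\to C$ produces the desired extension $\Spec R\to \underline{\Hom}_k(Y,X)$ of the original morphism. Uniqueness is immediate since $\underline{\Hom}_k(Y,X)$ is a disjoint union of quasi-projective (hence separated) schemes.

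\emph{Projectivity and purity of $\underline{\Hom}_k^P(Y,X)$.} By Grothendieck's representability theorem, $\underline{\Hom}_k^P(Y,X)$ is quasi-projective and therefore separated and of finite type over $k$; combined with the valuative criterion verified above, it is proper, hence projective. For purity, take $T$ smooth over $k$ and a dense open $U\subset T$ with $\mathrm{codim}(T\setminus U)\geq 2$. A morphism $U\to \underline{\Hom}_k^P(Y,X)$ amounts to a $k$-morphism $g\colon Y\times_k U\to X$. Since $Y\times_k T$ is smooth over $k$ and $Y\times_k U$ is open with complement of codimension $\geq 2$, purity of $X$ extends $g$ to $G\colon Y\times_k T\to X$, giving a morphism $T\to \underline{\Hom}_k(Y,X)$. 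This factors through $\underline{\Hom}_k^P(Y,X)$, since each connected component of $T$ meets the dense open $U$ and $\underline{\Hom}_k^P(Y,X)$ is open and closed in $\underline{\Hom}_k(Y,X)$.

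\emph{Main obstacle.} The key point is that, in order to invoke the purity hypothesis on $X$, the source of the map being extended must be smooth over $k$. This is precisely what the reduction to DVRs essentially of finite type (so that $Y\times_k C$ is smooth over $k$) and the use of the smooth variety $Y\times_k T$ in the purity argument arrange; the remaining codimension-one extension step is the standard extension of a rational map to a proper target from a regular source.
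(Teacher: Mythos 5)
Your proof is correct, and the first half coincides with the paper's argument: the paper likewise takes a smooth affine curve $S$, extends a $K(S)$-morphism $Y_{K(S)}\to X_{K(S)}$ over an open $U\subset Y_S$ with complement of codimension $\geq 2$ using properness of $X$, and then fills in the rest using purity of $X$ and smoothness of $Y\times_k S$. Where you genuinely diverge is the purity of $\underline{\Hom}_k^P(Y,X)$: the paper deduces it by first invoking the rigidity lemma (Lemma \ref{lem:applying_rigidity}) to show that the evaluation map $\mathrm{eval}_y\colon \underline{\Hom}_k^P(Y,X)\to X$ is finite, and then applying Lemma \ref{lem:hartog} (purity descends along affine morphisms to a pure target), whereas you prove purity directly from the functorial description of the Hom-scheme, applying the purity of $X$ to the smooth variety $Y\times_k T$ and checking that the extension stays in the Hilbert-polynomial component by local constancy. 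Your route is more self-contained and avoids rigidity altogether; the paper's route has the advantage of establishing the finiteness of the evaluation maps along the way, which is reused later (Corollary \ref{cor:crit_for_qf}, Proposition \ref{prop:zero_dimensionality}). One small imprecision: a DVR essentially of finite type over $k$ need not be of the form $\mathcal{O}_{C,c}$ for a curve $C$ and a \emph{closed} point $c$ (its residue field can have positive transcendence degree, e.g.\ the local ring at the generic point of a divisor in a surface). This is harmless: for a separated finite type $k$-scheme, testing the existence part of the valuative criterion on curve-DVRs does suffice (embed in a projective closure and detect any missing boundary point by a curve), and the paper's own proof makes exactly the same implicit reduction by testing only on smooth affine curves; still, you should either cite the refined valuative criterion or phrase the reduction as "it suffices to test on local rings of smooth curves at closed points" rather than asserting that all essentially-of-finite-type DVRs have this form.
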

 \begin{proof} 
 Let $S$ be a smooth affine curve over $k$ and let $K=K(S)$ be its function. Note that $X_S :=X\times_k S$ is pure over $k$ (as $X$ and $S$ are pure over $k$).
 We claim that the injective map of sets
 \[
 \mathrm{Hom}_S(Y_S,X_S) \to \mathrm{Hom}_K(Y_K,X_K)
 \] is surjective. To do this, 
  let $f:Y_K \to X_K$ be a morphism over $K$.  Since $X$ is proper over $k$, the scheme $X_S$ is proper over $S$. In particular, by the valuative criterion of properness, there is an open $U\subset Y_S$ with $U_K\cong Y_K$ and a morphism $U\to X_S$ extending $Y_K\to X_K$ with     $\mathrm{codim} (Y_S \setminus U) \geq 2$. Since $Y_S = Y\times_k S$ is a smooth variety over $k$, the purity of $X\times_k S$ implies that the morphism $U\to X_S$ extends to a morphism $Y_S\to X_S$. This shows that $\underline{\Hom}_k(Y,X)$ satisfies the valuative criterion of properness over $k$. In particular, for any $P$ in $\QQ[t]$, the quasi-projective scheme $\underline{\Hom}_k^P(Y,X)$ over $k$ is projective over $k$.  Now, since $Z:=\underline{\Hom}_k^P(Y,X)$ is proper over $k$, by Lemma \ref{lem:applying_rigidity}, for $y$ in $Y(k)$, the evaluation morphism $\mathrm{eval}_y:Z\to X$ is finite. Thus, as $X$ is pure and $Z\to X$ is finite (hence affine), we conclude that  $Z  = \underline{\Hom}_k^P(Y,X)$ is pure (Lemma \ref{lem:hartog}). This concludes the proof.
 \end{proof}
 
 The arguments used in the proof of  Proposition \ref{prop:projectivity_of_Hom} can be used to prove the properness of other $\Hom$-schemes (and $\Hom$-stacks) as we show now. Concerning algebraic stacks, we follow the conventions of the stacks project \cite[Tag~026N]{stacks-project}.
 \begin{lemma}\label{lem:properness_of_hom} Let $X$ be a projective pure variety over $k$, and let $U\to M$ be a smooth proper representable morphism of smooth finite type separated Deligne-Mumford algebraic stacks over $k$.  Let $\mathcal{L}$ be a $M$-relative ample line bundle on $U$. Then,
the natural (representable) morphism $ \underline{\Hom}_{M}( U ,X\times M) \to  M$  of algebraic stacks satisfies the valuative criterion of properness over $k$.  Therefore, for any polynomial $P\in \QQ[t]$, the morphism $\underline{\Hom}^P_M(U,X\times M)\to M$ is proper.  
\end{lemma}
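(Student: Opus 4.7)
The plan is to adapt the proof of Proposition \ref{prop:projectivity_of_Hom} to the stacky setting by reducing the valuative criterion to a situation where we can spread out over a smooth curve and invoke purity of $X$. Since $\underline{\Hom}_M(U,X\times M)\to M$ is representable and its source and target are locally of finite type over $k$, the valuative criterion may be tested against DVRs $R$ essentially of finite type over $k$. Fix such an $R$ with fraction field $K$, a morphism $\Spec R\to M$, and a lift over $K$ encoded as an $M$-morphism $U_K\to X\times K$, where $U_R:=U\times_M\Spec R$ is, by representability and the hypotheses on $U\to M$, a regular scheme smooth and proper over $R$. The goal is to extend this to an $R$-morphism $U_R\to X\times R$; any such extension is automatically unique since $X\times R$ is separated and $U_K\subset U_R$ is schematically dense.

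The key step is to spread out: by a standard limit argument, using that $M$ is a finite type DM stack, I would produce a smooth affine curve $S$ over $k$, a closed point $s_0\in S(k)$ with $\mathcal{O}_{S,s_0}\cong R$, a morphism $S\to M$ realizing $\Spec R\to M$ as the localization at $s_0$, and, after shrinking $S$ around $s_0$, an $S^\circ$-morphism $U_{S^\circ}\to X\times S^\circ$ on an open $S^\circ\subset S$ containing the generic point that restricts to $U_K\to X\times K$. Setting $U_S:=U\times_M S$, this scheme is smooth and proper over $S$, and since $S$ is smooth over $k$, the variety $U_S$ is itself smooth over $k$. Now the argument of Proposition \ref{prop:projectivity_of_Hom} applies: properness of $X\times S\to S$ together with regularity of $U_S$ allows one to extend $U_{S^\circ}\to X\times S^\circ$ to an $S$-morphism defined on an open $V\subset U_S$ with $\mathrm{codim}(U_S\setminus V)\geq 2$, by applying the valuative criterion of properness at each codimension-one point of $U_S$. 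The projection $X\times S\to X$ is affine (since $S$ is affine over $k$), so Lemma \ref{lem:hartog} together with the purity of $X$ gives purity of $X\times S$ over $k$; combined with the smoothness of $U_S$ over $k$, this forces $V\to X\times S$ to extend to an $S$-morphism $U_S\to X\times S$. Restricting along $\Spec R\hookrightarrow S$ yields the required $R$-morphism $U_R\to X\times R$.

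For the final assertion, the substack $\underline{\Hom}_M^P(U,X\times M)\to M$ is separated and of finite type over $M$ by standard representability results for relative Hom stacks (using that $U\to M$ is flat and projective, since it carries a relative ample line bundle $\mathcal{L}$), and combined with the valuative criterion established above it is proper over $M$. The main obstacle is arguably the spreading-out step: simultaneously extending $\Spec R\to M$ to a smooth-curve-valued point of $M$ and spreading the $K$-morphism to an $S^\circ$-morphism is routine but requires some care with the formalism of limits for finite type DM stacks. Once this is in hand, the remainder of the proof is a direct translation of the purity argument of Proposition \ref{prop:projectivity_of_Hom}.
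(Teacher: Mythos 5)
Your proof is correct and follows essentially the same route as the paper: extend the map over the codimension-one points of $U_S$ using properness of $X\times_k S\to S$, then fill in the codimension $\geq 2$ locus using purity of $X\times_k S$ (via Lemma \ref{lem:hartog}) together with smoothness of $U_S$ over $k$. The only difference is packaging: the paper interprets the valuative criterion directly in terms of a smooth affine curve $S\to M$ and a lift at its generic point, which lets it skip your DVR-to-curve spreading-out step entirely.
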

\begin{proof} Since $\underline{\Hom}^P_M(U,X\times M)\to M$ is a finite type separated morphism of finite type separated algebraic stacks over $k$, it suffices to prove the first statement. Let $S$ be a smooth affine curve over $k$,   let $S\to  M$ be a morphism, and suppose that the morphism $\Spec K(S) \to S\to  M$ lifts to a morphism $\Spec K(S) \to \underline{\Hom}_{M}(U,X\times M)$. Thus, we are given a smooth finite type morphism $U_S\to S$ of schemes and a   morphism $ U_{K(S)}\to X_{K(S)}$. By properness of (the morphism  of noetherian schemes)  $X\times_k S\to S$, there is a dense open $V\subset  U_S$ with $\mathrm{codim}(U_S\setminus V)\geq 2$ and a morphism $V\to X\times_k S$ which extends the morphism $U_{K(S)}\to X_{K(S)}$ over $K(S)$ to a morphism over $S$. Since $S$ is affine, the curve $S$ is pure over $k$ (Lemma  \ref{lem:hartog}). Therefore, as $X$ and $S$ are pure over $k$, the variety $X\times_k S$ is pure over $k$. Since $U_S$ is a smooth finite type separated Deligne-Mumford algebraic stack over $k$  and $X\times_k S$ is a pure  variety over $k$, the morphism $V\to X\times_k S$ extends to a morphism $U_S\to X\times_k S$.   This concludes the proof of the lemma. 
\end{proof}

We now combine Lemma \ref{lem:applying_rigidity} and Proposition \ref{prop:projectivity_of_Hom} to show that ``evaluation maps'' on finite type closed subschemes of $\Hom$-schemes of pure varieties are finite.

\begin{corollary}\label{cor:crit_for_qf}
 Let $X$ be a proper  pure variety over $k$ and let $Y$ be a smooth projective variety over $k$. If $Z$ is an irreducible component  of $\underline{\Hom}_k(Y,X)$, then $Z$ is projective and, for any $y$ in $Y(k)$, the evaluation morphism 
 \[
 Z\to X, f \mapsto f(y)
 \] is  finite.
\end{corollary}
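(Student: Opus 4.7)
The plan is to combine the two ingredients already assembled in Section \ref{S3}: the projectivity of the Hilbert-polynomial pieces of $\underline{\Hom}_k(Y,X)$ proven in Proposition \ref{prop:projectivity_of_Hom}, and the rigidity-based finiteness of evaluation maps on proper subschemes of Hom-schemes proven in Lemma \ref{lem:applying_rigidity}. The reduction from an arbitrary irreducible component $Z$ to something to which these results apply is the content of the proof.

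First, I would recall that $\underline{\Hom}_k(Y,X)$ is a countable \emph{disjoint} union of the quasi-projective schemes $\underline{\Hom}_k^P(Y,X)$, indexed by Hilbert polynomials $P \in \QQ[t]$ (with respect to some fixed ample line bundle on $Y$ and a fixed ample line bundle on $X$). Since each $\underline{\Hom}_k^P(Y,X)$ is open and closed in $\underline{\Hom}_k(Y,X)$, any irreducible subset of $\underline{\Hom}_k(Y,X)$ is contained in a single such piece. In particular, the irreducible component $Z$ is an irreducible component of $\underline{\Hom}_k^P(Y,X)$ for some $P$.

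Next, since $X$ is a projective pure variety over $k$ and $Y$ is a smooth projective variety over $k$, Proposition \ref{prop:projectivity_of_Hom} tells us that $\underline{\Hom}_k^P(Y,X)$ is projective over $k$. Therefore $Z$, being an irreducible component of this projective scheme, is itself projective over $k$. This proves the first assertion.

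Finally, for the second assertion, $Z$ is a locally closed (in fact closed) subscheme of $\underline{\Hom}_k(Y,X)$ which is proper over $k$. Lemma \ref{lem:applying_rigidity} then directly applies to $Z$ and yields that for every $y \in Y(k)$, the evaluation morphism $\mathrm{eval}_y \colon Z \to X$, $f \mapsto f(y)$, is finite. This finishes the proof. There is no real obstacle here; the content lies entirely in the two earlier results, and the only thing to notice is the trivial but essential observation that an irreducible subscheme of $\underline{\Hom}_k(Y,X)$ lives in a single Hilbert-polynomial stratum, which makes the projectivity statement of Proposition \ref{prop:projectivity_of_Hom} applicable.
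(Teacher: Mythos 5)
Your proof is correct and follows essentially the same route as the paper: the paper likewise observes that $Z$ is closed, invokes Proposition \ref{prop:projectivity_of_Hom} to conclude that the quasi-projective scheme $Z$ is proper (hence projective), and then applies Lemma \ref{lem:applying_rigidity} for the finiteness of the evaluation map. Your explicit remark that $Z$ sits inside a single Hilbert-polynomial stratum $\underline{\Hom}_k^P(Y,X)$ is merely a slightly more detailed phrasing of the same reduction.
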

\begin{proof}
Note that $Z$ is closed in $\underline{\Hom}_k(Y,X)$. Then, by Proposition \ref{prop:projectivity_of_Hom}, the quasi-projective $k$-scheme $Z$ is proper over $k$ (hence projective).  The result now follows from Lemma \ref{lem:applying_rigidity}.
\end{proof}

Let $m\geq 1$ be an integer. Let $Y$ and $X$ be proper schemes over $k$. Let $y_1,\ldots, y_m$ be elements of $Y(k)$, and let $x_1,\ldots,x_m$ be elements of $X(k)$.  The functor $$\underline{\Hom}_k((Y,y_1,\ldots,y_m), (X,x_1,\ldots,x_m))$$ parametrizing morphisms $f:Y\to X$ with $f(y_1)=x_1,\ldots, f(y_m) = x_m$ is representable by a (possibly empty) closed subscheme of $\underline{\Hom}_k(Y,X)$ which we (also) denote by $$\underline{\Hom}_k((Y,y_1,\ldots,y_m), (X,x_1,\ldots,x_m)).$$ 
 
We conclude this section with the following  proposition which  says that algebraic sets of pointed maps to a pure variety are zero-dimensional.
 
 \begin{proposition}\label{prop:zero_dimensionality}
Let $X$ be a projective pure variety over $k$. Let $m\geq 1$ be an integer, let $Y$ be a smooth projective variety over $k$, let $y_1,\ldots, y_m$ be pairwise distinct points  in $Y(k)$, and let $x_1,\ldots,x_m\in X(k)$. Then the locally finite type scheme $$\underline{\mathrm{Hom}}_k((Y,y_1,\ldots,y_m),(X,x_1,\ldots,x_m))$$ parametrizing morphisms $f:Y\to X$ with  $f(y_1) = x_1, \ldots, f(y_m) = x_m$ is zero-dimensional. 
 \end{proposition}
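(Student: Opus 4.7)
The plan is to reduce to the single-point case $m=1$ and then to use Corollary \ref{cor:crit_for_qf} to conclude. Since
\[
\underline{\Hom}_k((Y,y_1,\ldots,y_m),(X,x_1,\ldots,x_m)) \subset \underline{\Hom}_k((Y,y_1),(X,x_1))
\]
is a closed subscheme, it suffices to show that $W := \underline{\Hom}_k((Y,y_1),(X,x_1))$ is zero-dimensional.

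To show $W$ is zero-dimensional, it is enough to show that each irreducible component $C$ of $W$ is zero-dimensional. Here I would use that $W$ is naturally a closed subscheme of $\underline{\Hom}_k(Y,X)$, so $C$ is contained in some irreducible component $Z$ of $\underline{\Hom}_k(Y,X)$. By Corollary \ref{cor:crit_for_qf} (applied to the smooth projective variety $Y$ and the projective pure variety $X$), the component $Z$ is projective over $k$ and the evaluation morphism $\mathrm{eval}_{y_1} \colon Z \to X$, $f \mapsto f(y_1)$, is finite.

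Now $C \subset Z \cap W = \mathrm{eval}_{y_1}^{-1}(x_1)$, i.e.\ $C$ is contained in the fiber of the finite morphism $\mathrm{eval}_{y_1} \colon Z \to X$ over the $k$-point $x_1$. A fiber of a finite morphism is zero-dimensional, so $C$ is zero-dimensional, as desired. Since $W$ is a countable union of such components, it is itself zero-dimensional, which completes the reduction argument. No step seems particularly hard; the main content has been absorbed into the earlier results, where the rigidity lemma (via Lemma \ref{lem:applying_rigidity}) and the valuative criterion for the Hom scheme of a pure target (via Proposition \ref{prop:projectivity_of_Hom}) together give the finiteness of evaluation on each component that the argument relies on.
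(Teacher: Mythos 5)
Your proposal is correct and follows essentially the same route as the paper: reduce to the case of a single marked point, then observe that the pointed Hom-scheme is a union of fibers of the evaluation maps $\mathrm{eval}_{y_1}\colon Z\to X$ over the irreducible components $Z$ of $\underline{\Hom}_k(Y,X)$, which are finite by Corollary \ref{cor:crit_for_qf}. The only difference is that you spell out the component-by-component bookkeeping that the paper leaves implicit.
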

 \begin{proof} We show that, for $y$ in $Y(k)$ and $x$ in $X(k)$, the scheme $\underline{\Hom}_k((Y,y),(X,x))$ is zero-dimensional.  (This is clearly enough.)
To do so, let  $Z$ be an irreducible component of $\underline{\Hom}_k(Y,X)$. Note that, as $X$ is pure over $k$, it follows from Corollary \ref{cor:crit_for_qf} that, for every $y $  in $Y(k)$, the evaluation morphism $Z\to X $ of $k$-schemes given by $f\mapsto f(y)$ is finite.  This implies that  the scheme $\underline{\Hom}_k((Y,y),(X,x))$ is zero-dimensional, as required.
\end{proof}

 \section{Bounded varieties: definitions and grouplessness}\label{section:boundedness}
 By Definition \ref{def:alg_hyp}, a projective algebraically hyperbolic variety satisfies a ``strong'' form of boundedness with respect to maps from curves. As we explained in the introduction, a lot of properties of algebraically hyperbolic varieties we prove in this paper also hold for varieties satisfying (a priori) ``weaker'' properties of boundedness with respect to maps from curves. To state and prove our results, we start by defining what we mean by ``bounded'' and ``$(n,m)$-bounded'' projective schemes.

\begin{definition}\label{def:bounded} Let $n$ be a non-negative integer. A projective scheme $X$ over $k$ is \emph{n-bounded over $k$} if, for all normal  projective integral schemes $Y$ of dimension at most $n$ over $k$, the  scheme $\underline{\mathrm{Hom}}_k(Y,X)$ is of finite type over $k$.  A projective scheme $X$ over $k$ is \emph{bounded} if, for all integers $n\geq 1$, the scheme $X$ is $n$-bounded.
\end{definition}

\begin{definition}\label{def:nm_bounded}
Let $n$ and $m$ be non-negative integers. A projective scheme $X$ over $k$ is \emph{(n,m)-bounded (over $k$)} if, for all normal projective integral schemes $Y$ of dimension at most $n$ over $k$, all  pairwise distinct points $y_1,\ldots, y_m \in Y(k)$, and all $x_1,\ldots,x_m\in X(k)$, the scheme 
\[
\underline{\mathrm{Hom}}_k((Y,y_1,\ldots,y_m), (X,x_1,\ldots,x_m))
\] is of finite type.
\end{definition}

\begin{remark} \label{bounded-impications}
Note that a projective variety over $k$ is $n$-bounded over $k$ if and only if it is $(n,0)$-bounded over $k$ (by definition). Obviously, if $X$ is $n$-bounded, then $X$ is $(n-1$)-bounded. Moreover, if $X$ is $(n,m)$-bounded over $k$, then $X$ is $(n,m+1)$-bounded.
\end{remark}

\begin{proposition}\label{prop:nm_bounded_is_groupless} Let $n\geq 1$ and $m\geq 0$ be integers.
If $X$ is a projective  $(n,m)$-bounded scheme over $k$, then $X$ is groupless and pure over $k$.  
\end{proposition}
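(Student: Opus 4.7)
The plan is to prove purity and grouplessness separately, in each case producing infinitely many pointed morphisms of unbounded $\mathcal{L}$-degree (for a fixed ample $\mathcal{L}$ on $X$) to contradict $(n,m)$-boundedness. Since $(n,m)$-boundedness with $n\geq 1$ trivially implies $(1,m)$-boundedness (by testing only against curves), I will work with $Y$ a smooth projective integral curve throughout.

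For \emph{purity}, by Lemma \ref{lem:pure_means_no_rat_curve} it suffices to show $X$ contains no rational curves. Suppose for contradiction that $g\colon \PP^1 \to X$ is non-constant, and fix pairwise distinct points $y_1,\ldots,y_m\in \PP^1(k)$ with $x_i := g(y_i)$. The space of degree-$d$ self-maps of $\PP^1$ is $(2d+1)$-dimensional, while the conditions $\phi(y_i) = y_i$ impose $m$ (generically independent) linear conditions on the coefficients of $\phi$; so for $d \geq m$ there exist degree-$d$ endomorphisms $\phi_d \colon \PP^1\to\PP^1$ fixing each $y_i$. The composition $g\circ\phi_d$ lies in $\underline{\Hom}_k((\PP^1, y_1,\ldots,y_m),(X, x_1,\ldots,x_m))$ with $\mathcal{L}$-degree $d\cdot\deg_{\PP^1}(g^*\mathcal{L})\to\infty$, contradicting $(1,m)$-boundedness.

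For \emph{grouplessness}, by Lemma \ref{lem:groupless_for_proper} it suffices to show every morphism $f\colon A \to X$ from an abelian variety $A$ is constant. Assume $f$ is non-constant. Fix an integer $N$ large enough that $A[N](k)$ contains $m$ pairwise distinct points $a_1,\ldots,a_m$, and set $x_i := f(a_i)$. By a Bertini-type argument---intersect $\dim A-1$ general divisors of a sufficiently high power of a very ample line bundle on $A$, constrained to pass through the $a_i$---one constructs a smooth projective integral curve $C \subset A$ through $a_1,\ldots,a_m$ with $f|_C$ non-constant. For each integer $n\equiv 1\pmod N$, the composition
\[
\psi_n \,:=\, f \circ [n] \circ \iota_C \colon C \to X
\]
satisfies $\psi_n(a_i) = f(na_i) = f(a_i) = x_i$, so $\psi_n \in \underline{\Hom}_k((C, a_1,\ldots,a_m),(X, x_1,\ldots,x_m))$. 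Using the symmetric/antisymmetric decomposition of $f^*\mathcal{L}$ on $A$, together with the fact that antisymmetric line bundles lie in $\Pic^0(A)$ and hence restrict to degree zero on $C$, one computes
\[
\deg_C \psi_n^*\mathcal{L} \,=\, n^2 \cdot \deg_C (f|_C)^*\mathcal{L},
\]
which is positive (as $f|_C$ is non-constant and $\mathcal{L}$ is ample) and grows quadratically in $n$. Thus the $\psi_n$ give infinitely many distinct pointed morphisms of unbounded degree, contradicting $(1,m)$-boundedness.

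The hard part will be the Bertini-type construction of $C$, since insisting on passage through the prescribed torsion points $a_i$ is a codimension condition that could destroy smoothness there. In characteristic zero this is standard to handle by passing to a sufficiently high power of the very ample line bundle on $A$ and arranging smooth, transverse intersections at the basepoints, while ensuring the resulting complete-intersection curve is not contracted by $f$ (which holds generically because $f$ is non-constant and the fibers of $f$ form a proper sub-locus of $A$).
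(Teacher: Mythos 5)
Your proof is correct, and your grouplessness argument is essentially the paper's: a non-constant map $f\colon A\to X$ from an abelian variety, a smooth curve $C\subset A$ through torsion points $a_1,\dots,a_m$, and the compositions $f\circ[\ell]\circ\iota_C$ with $\ell\equiv 1\pmod N$ of unboundedly growing degree. The only differences are minor: the paper deduces purity from grouplessness via Proposition \ref{prop:groupless_implies_pure} rather than re-proving the absence of rational curves with self-maps of $\PP^1$, and you are actually more careful than the paper about the degree computation and about choosing $C$ not contracted by $f$ (a point the paper's proof leaves implicit).
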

\begin{proof}  Since a projective $(a,b)$-bounded scheme is $(a,b+1)$-bounded (Remark \ref{bounded-impications}), we may and do assume that the integer $m$ is greater than $1$. Suppose that $X$ is not groupless over $k$. Let $A$ be an abelian variety  and let $f:A\to X$ be a non-constant morphism.  Let $a_1,\ldots,a_m$ be pairwise distinct points in $A[m]$.  Let $C\subset A$ be a smooth projective curve containing $a_1, \ldots, a_m$. Let $\ell > 1$ be an integer such that $\ell=1\mod m$. Then, for every $1\leq i\leq m$, as  $a_i$ is $m$-torsion, we see that $\ell a_i = a_i$ in $A$.  The morphism $f\circ [\ell]:A\to X$ sends $a_1,\ldots,a_m$ to $f(a_1),\ldots, f(a_m)$, respectively. In particular, as the morphisms $f\circ [\ell]$ correspond to $k$-points of different components of $\underline{\Hom}_k((A,a_1,\ldots,a_m), (X, f(a_1), \ldots, f(a_m)))$,  we see that $X$ is not $(1,m)$-bounded. It follows that $X$ is not $(n,m)$-bounded. 
We conclude  that a projective $(n,m)$-bounded scheme is groupless. Finally, since projective groupless varieties are pure (Proposition \ref{prop:groupless_implies_pure}), this concludes the proof.
\end{proof}

\begin{corollary}\label{cor:alg_hyp_is_groupless}
A projective algebraically hyperbolic variety over $k$ is groupless and pure over $k$.
\end{corollary}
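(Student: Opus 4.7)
The plan is to reduce the corollary to Proposition \ref{prop:nm_bounded_is_groupless} by showing that a projective algebraically hyperbolic variety over $k$ is $1$-bounded, and then passing to $(1,m)$-boundedness for all $m \geq 0$ via Remark \ref{bounded-impications}. The main content is recognizing that the degree bound in Definition \ref{def:alg_hyp} gives exactly the finite-type condition needed.

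More explicitly, let $X$ be projective algebraically hyperbolic over $k$ with ample line bundle $\mathcal{L}$, and let $\alpha,\beta$ be as in Definition \ref{def:alg_hyp}. Let $Y$ be a normal projective integral scheme over $k$ of dimension at most $1$. If $\dim Y = 0$, then $Y = \Spec k$ and $\underline{\Hom}_k(Y,X) = X$ is of finite type, trivially. If $\dim Y = 1$, then $Y$ is a smooth projective connected curve over $k$ (since in dimension one, normal is equivalent to smooth). Setting $g = \mathrm{genus}(Y)$, algebraic hyperbolicity gives the uniform bound
\[
\deg_Y f^\ast \mathcal{L} \leq \alpha(2g-2) + \beta
\]
for every $k$-morphism $f \colon Y \to X$. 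Since this bound depends only on $Y$ (and $X$), only finitely many Hilbert polynomials occur among the graphs of the morphisms $f \colon Y \to X$ inside $Y \times_k X$, so $\underline{\Hom}_k(Y,X)$ is a finite union of quasi-projective schemes, hence of finite type over $k$. Thus $X$ is $1$-bounded over $k$.

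By Remark \ref{bounded-impications}, $1$-boundedness is the same as $(1,0)$-boundedness, and $(1,m)$-boundedness follows for every $m \geq 0$ by induction on $m$. Applying Proposition \ref{prop:nm_bounded_is_groupless} with $n = 1$ and (say) $m = 2$ then yields that $X$ is groupless and pure over $k$, which is the required conclusion. The only step that requires any care is the passage from the numerical degree bound to finite-type of the Hom-scheme, and this is standard once one observes that the Hilbert polynomial of a closed subscheme of $Y \times_k X$ is determined by finitely many discrete invariants bounded in terms of $\deg_Y f^\ast \mathcal{L}$ and $g$.
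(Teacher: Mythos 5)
Your proposal is correct and follows essentially the same route as the paper: the paper's proof simply notes that an algebraically hyperbolic projective variety is $1$-bounded and then invokes Proposition \ref{prop:nm_bounded_is_groupless}. You merely spell out in more detail the (standard, and correct) step that the uniform degree bound forces only finitely many Hilbert polynomials of graphs, hence finite-typeness of $\underline{\Hom}_k(C,X)$; the detour through $(1,2)$-boundedness is harmless but unnecessary, since the proposition already covers the case $m=0$.
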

\begin{proof}
Note that algebraically hyperbolic projective varieties are $1$-bounded. Therefore, the result follows from Proposition \ref{prop:nm_bounded_is_groupless}. 
\end{proof}

Assuming $m\geq 1$ is a positive integer, we now show that an $(n,m)$-bounded projective variety   admits only finitely many pointed maps $(Y,y_1,\ldots,y_m)\to (X,x_1,\ldots,x_m)$. The precise statement reads as follows.

\begin{lemma}\label{lem:nm_bounded_equivalence} Let $X$ be  a projective variety over $k$.
Let $n\geq 1$ and $m\geq 1$ be integers. The following are equivalent.
\begin{enumerate}
\item The projective variety $X$ is $(n,m)$-bounded over $k$. 
\item   For all   projective integral schemes $Y$ of dimension at most $n$ over $k$, all  pairwise distinct points $y_1,\ldots, y_m \in Y(k)$, and all $x_1,\ldots,x_m\in X(k)$, the set 
\[
 {\mathrm{Hom}}_k((Y,y_1,\ldots,y_m), (X,x_1,\ldots,x_m))
\] is   finite.
\end{enumerate}
\end{lemma}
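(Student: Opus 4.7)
The plan is to prove the two implications separately. The direction $(2) \Rightarrow (1)$ is essentially formal: given a normal projective integral $Y$ of dimension $\leq n$ together with pairwise distinct $y_1,\ldots,y_m \in Y(k)$ and $x_1,\ldots,x_m \in X(k)$, hypothesis (2) provides that $\Hom_k((Y,y_\bullet),(X,x_\bullet))$ is finite. I would then write $\underline{\Hom}_k(Y,X)$ as a countable disjoint union $\bigsqcup_P \underline{\Hom}^P_k(Y,X)$ of quasi-projective pieces and note that the pointed-map condition cuts out a closed subscheme in each. Since $k$ is algebraically closed, any non-empty such piece contributes a $k$-point; by (2) only finitely many pieces are non-empty, and the resulting finite union of quasi-projective schemes is of finite type. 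This gives (1).

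The substantive direction is $(1) \Rightarrow (2)$. First I would invoke Proposition \ref{prop:nm_bounded_is_groupless} to conclude that $X$ is pure over $k$. Next, given projective integral $Y$ of dimension $\leq n$ (possibly singular) with the pointed data, I would apply Hironaka's resolution of singularities (available since $\mathrm{char}(k)=0$) to produce a surjective morphism $\pi \colon \tilde Y \to Y$ with $\tilde Y$ smooth projective of the same dimension. Choosing a $k$-point $\tilde y_i$ in each non-empty fibre $\pi^{-1}(y_i)$ yields pairwise distinct points of $\tilde Y$, and composition with $\pi$ gives an injection
\[
\Hom_k((Y,y_\bullet),(X,x_\bullet)) \hookrightarrow \Hom_k((\tilde Y,\tilde y_\bullet),(X,x_\bullet)).
\]
It therefore suffices to show that the right-hand side is finite.

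To this end, I would combine two facts about the scheme $\underline{\Hom}_k((\tilde Y,\tilde y_\bullet),(X,x_\bullet))$. Applying (1) to the smooth (hence normal) projective $\tilde Y$ of dimension $\leq n$ shows that this scheme is of finite type. Separately, since $X$ is pure projective and $\tilde Y$ is smooth projective, Proposition \ref{prop:zero_dimensionality} shows that it is zero-dimensional. A finite-type, zero-dimensional scheme over an algebraically closed field has finitely many $k$-points, finishing $(1)\Rightarrow(2)$.

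The main obstacle is bridging the gap between \emph{of finite type} (what (1) directly supplies) and \emph{finite as a set of $k$-points} (what (2) demands). Purity of $X$, delivered by Proposition \ref{prop:nm_bounded_is_groupless}, and Proposition \ref{prop:zero_dimensionality} are designed for exactly this purpose; the only step truly requiring characteristic zero is the use of a smooth resolution to reduce a general integral $Y$ to the smooth hypothesis of Proposition \ref{prop:zero_dimensionality}.
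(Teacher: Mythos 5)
Your proof is correct and follows essentially the same route as the paper's: reduce to smooth $Y$ by resolution of singularities, use Proposition \ref{prop:nm_bounded_is_groupless} to get purity of $X$, apply Proposition \ref{prop:zero_dimensionality} for zero-dimensionality, and combine with the finite-type hypothesis. The only difference is that you spell out the (easy) direction $(2)\Rightarrow(1)$, which the paper dismisses as clear.
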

\begin{proof}  
Clearly, $(2)\implies (1)$. Let us show that $(1)\implies (2)$. Replacing $Y$ by a densingularization if necessary, we may and do assume that $Y$ is smooth. Now, to show that $(1)\implies (2)$,  note that an $(n,m)$-bounded projective variety is groupless  and pure (Proposition \ref{prop:nm_bounded_is_groupless}).  Therefore, since  $X$ is a pure   projective scheme over $k$ and $Y$ is a smooth projective variety over $k$,  the scheme $\underline{\Hom}_k((Y,y_1,\ldots,y_m),(X,x_1,\ldots,x_m))$ is zero-dimensional (Proposition \ref{prop:zero_dimensionality}). By our assumption $(1)$, the latter scheme is  of finite type. As a finite type zero-dimensional $k$-scheme is finite, this concludes the proof.
\end{proof}

\begin{remark}
Note that the assumption that $m$ is positive    in Lemma \ref{lem:nm_bounded_equivalence} is necessary. Indeed, $\Hom(C,X)$ contains all  constant maps $C\to X$, and is therefore   infinite (even if $X$ is bounded). However, more interestingly, there is a bounded projective surface $X$ over $\mathbb{C}$ and a smooth projective curve $C$ over $\mathbb{C}$ such that there are infinitely many non-constant morphisms $C\to X$ (of bounded degree). Explicitly: let $C$ be a smooth projective connected curve of genus at least two  over $\mathbb{C}$, and let $X := C\times C$. Then, for every closed point $c$ of $C$, the morphism $f_c:C\to X$ defined by $f_c(d) = (d,c)$ is of bounded degree. Since $C$ is bounded, the surface $X$ is bounded. Moreover, the morphisms $f_c$ are pairwise distinct non-constant morphisms.
\end{remark}

\begin{proposition}\label{prop:1m_is_11} Let $X$ be a projective variety over $k$.
The following are equivalent.
\begin{enumerate}
\item The projective variety $X$ is $(1,1)$-bounded.
\item There is an integer $m\geq 1$ such that $X$ is $(1,m)$-bounded.
\end{enumerate}
\end{proposition}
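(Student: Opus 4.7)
The implication $(1)\Rightarrow (2)$ is immediate by taking $m=1$. For the converse, my plan is to assume $X$ is $(1,m)$-bounded for some $m\geq 2$ (the case $m=1$ being vacuous) and to deduce $(1,1)$-boundedness via an \'etale cover trick that ``separates'' one marked point into $m$ marked points.

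First I would invoke Proposition \ref{prop:nm_bounded_is_groupless} to conclude that $X$ is groupless and pure over $k$, and then use Lemma \ref{lem:nm_bounded_equivalence} to reduce the verification of $(1,1)$-boundedness to showing that for every smooth projective connected curve $C$ over $k$, every $y\in C(k)$, and every $x\in X(k)$, the set $\Hom_k((C,y),(X,x))$ is finite. The case $C=\PP^1$ is disposed of immediately: purity of $X$ together with Lemma \ref{lem:pure_means_no_rat_curve} forces every morphism $\PP^1\to X$ to be constant, so $\Hom_k((\PP^1,y),(X,x))$ is a singleton.

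For the case $g(C)\geq 1$, the key step is to choose a connected finite \'etale cover $\pi\colon C'\to C$ of degree $m$; such a cover exists because the \'etale fundamental group of a smooth projective curve of positive genus over $k$ surjects onto $\ZZ/m\ZZ$ (via the Jacobian, for example). Writing $\pi^{-1}(y)=\{y_1',\ldots,y_m'\}$ for the $m$ pairwise distinct preimages of $y$, the pullback map
\[
\Psi\colon \Hom_k((C,y),(X,x)) \longrightarrow \Hom_k((C',y_1',\ldots,y_m'),(X,x,x,\ldots,x)),\qquad f\mapsto f\circ \pi,
\]
is well-defined (since $f\circ\pi(y_i')=f(y)=x$) and injective (since $\pi$ is surjective and $X$ is separated, so morphisms $C\to X$ are determined by their restriction along $\pi$). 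By Lemma \ref{lem:nm_bounded_equivalence} applied with $C'$ in the role of the source curve, the $(1,m)$-boundedness of $X$ implies that the target of $\Psi$ is finite. Hence the source of $\Psi$ is finite, as desired.

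The main obstacle is spotting the construction: one must realize that $(1,m)$-boundedness, applied not to $C$ itself but to a degree-$m$ \'etale cover $C'$, can be pulled back along $\pi$ to bound $\Hom_k((C,y),(X,x))$. Once this is noticed, the remaining steps are routine applications of purity, grouplessness, and the definitions. A minor technicality is the assertion that $C$ admits a connected finite \'etale cover of degree $m$ when $g(C)\geq 1$; this is standard and follows, for instance, from the fact that multiplication by $m$ on the Jacobian of $C$ pulled back to $C$ yields such a cover.
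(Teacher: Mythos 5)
Your proof is correct and uses essentially the same idea as the paper: inject $\Hom_k((C,c),(X,x))$ into a pointed Hom-set from a degree-$m$ cover of $C$ carrying $m$ distinct marked points over $c$, and then invoke Lemma \ref{lem:nm_bounded_equivalence}. The only (minor) difference is that the paper takes a degree-$m$ cover that is merely étale over the single point $c$, which exists for every smooth projective curve, so it needs no case split for $\mathbb{P}^1_k$ and no appeal to purity, whereas you insist on an everywhere-étale cover and therefore handle genus zero separately.
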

\begin{proof}Let $m\geq 1$ be an integer and assume that $X$ is $(1,m)$-bounded. To prove the proposition,  it suffices to show that $X$ is $(1,1)$-bounded.
Let $C$ be a smooth projective curve over $k$, let $c\in C(k)$, and   let $x\in X(k)$. We now show that   set $\Hom_k((C,c),(X,x))$ is finite.

 Let  $D$ be a smooth projective connected curve and let $f:D\to C$ be a finite surjective  morphism of degree $m$ which is \'etale over $c$.   Write $\{d_1,\ldots,d_m\} = f^{-1}\{c\}$, and note that $d_1,\ldots,d_m$ are pairwise distinct points of $D$. Define $x_1 = \ldots = x_m = x$. Since $D\to C$ is surjective, the map of sets \[
 \Hom_k((C,c),(X,x)) \to  
 \Hom_k((D,d_1,\ldots,d_m),(X,x_1,\ldots,x_m)),  \quad g\mapsto g\circ f
 \]   is injective. Since $X$ is $(1,m)$-bounded, the set $ \Hom_k((D,d_1,\ldots,d_m),(X,x_1,\ldots,x_m))$ is finite (Lemma  \ref{lem:nm_bounded_equivalence}). We conclude that $\Hom_k((C,c),(X,x))$ is finite, so that $X$ is $(1,1)$-bounded, as required.
\end{proof}

We will later show that a projective variety $X$ over $k$ is $(1,1)$-bounded  over $k$ if and only if, for every $n\geq 1$ and $m\geq 1$, we have that $X$ is $(n,m)$-bounded; see Theorem \ref{thm:1m_is_nl} for a more precise statement.

\section{Hyperbolicity and boundedness along finite maps} \label{S5}

In this section we show that the notions of being algebraically hyperbolic, Kobayashi hyperbolic, and $(n,m)$-bounded (for some fixed $n$ and $m$) behave in a similar manner along finite maps.  
 
In our proofs below we will use the ``slope'' of a morphism $f:C\to X$ with respect to a fixed ample line bundle on $X$.
\begin{definition}\label{def:slope} Let $\mathcal{L}$ be an ample line bundle on a projective scheme $X$ over $k$.
Let $C\to X$ be a morphism of projective schemes over $k$ with $C$ a smooth projective connected curve over $k$. The \emph{slope $s(f)$ of $f$ (with respect to $L$}) is defined as 
\[
s(f) = \frac{\deg_C f^\ast \mathcal{L}}{\max(1,\mathrm{genus}(C))}.
\]
\end{definition}

Note that a projective scheme $X$ over $k$ is algebraically hyperbolic  over $k$ if and only if there is a real number $\alpha$ (depending only on $X$ and some fixed ample line bundle $\mathcal{L}$ on $X$) such that, for every smooth projective connected curve $C$ over $k$ and every morphism $f:C\to X$, the slope (with respect to the aforementioned fixed ample line bundle on $X$) satisfies $s(f) \leq \alpha$. On the other hand, a projective scheme $X$ over $k$ is $1$-bounded over $k$ if and only if for every smooth projective connected curve $C$ over $k$ there is a real number $\alpha_C$ (which depends only on $X$ and $C$) such that $s(f) \leq \alpha_C$.  Thus, one could say that algebraic hyperbolicity is a ``uniform'' version of $1$-boundedness, as a projective variety $X$ is algebraically hyperbolic if and only if the slope of a morphism from a smooth projective curve to $X$ is uniformly bounded.

\begin{proposition}\label{prop:finite_morphisms}
Let $f:X\to Y$ be a finite morphism of projective varieties over $k$. Then the following statements hold.
\begin{enumerate}
 \item If $Y$ is algebraically hyperbolic over $k$, then $X$ is algebraically hyperbolic over $k$.
\item Let $n\geq 1$ and $m\geq 0$ be   integers. If $Y$ is $(n,m)$-bounded over $k$, then $X$ is $(n,m)$-bounded over $k$.
\item Assume $k=\mathbb{C}$. If $Y$ is Kobayashi hyperbolic over $k$, then $X$ is Kobayashi hyperbolic over $k$.
\end{enumerate}
\end{proposition}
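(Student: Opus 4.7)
The plan is to handle each part by transferring the relevant hyperbolicity or boundedness data from $Y$ back to $X$, exploiting the key fact that because $f$ is finite, the pullback $\mathcal{L}_X:=f^\ast\mathcal{L}_Y$ of an ample line bundle $\mathcal{L}_Y$ on $Y$ is ample on $X$.

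For (1), I would fix such an $\mathcal{L}_Y$ and let $\alpha,\beta>0$ be constants witnessing the algebraic hyperbolicity of $Y$. For any morphism $g\colon C\to X$ from a smooth projective connected curve $C$, the identity $\deg_C g^\ast\mathcal{L}_X=\deg_C (f\circ g)^\ast\mathcal{L}_Y$ combined with the algebraic hyperbolicity of $Y$ applied to $f\circ g$ immediately yields $\deg_C g^\ast\mathcal{L}_X\leq \alpha\cdot(2\,\mathrm{genus}(C)-2)+\beta$, showing that $X$ is algebraically hyperbolic with the same constants.

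For (2), the key observation is that composition with $f$ preserves Hilbert polynomials of graphs. Let $Z$ be a normal projective integral scheme over $k$ of dimension at most $n$, fix pairwise distinct points $z_1,\ldots,z_m\in Z(k)$ and points $x_1,\ldots,x_m\in X(k)$, and set $y_i:=f(x_i)$. Composition with $f$ then defines a morphism
\[
F\colon \underline{\Hom}_k((Z,z_1,\ldots,z_m),(X,x_1,\ldots,x_m)) \to \underline{\Hom}_k((Z,z_1,\ldots,z_m),(Y,y_1,\ldots,y_m)).
\]
Fix an ample line bundle $\mathcal{M}_Z$ on $Z$. For any morphism $g\colon Z\to X$ sending $z_i$ to $x_i$, the graph $\Gamma_g\subset Z\times X$ has Hilbert polynomial with respect to $\mathcal{M}_Z\boxtimes\mathcal{L}_X$ equal to
\[
P_g(t)=\chi(Z,(\mathcal{M}_Z\otimes g^\ast\mathcal{L}_X)^{\otimes t}),
\]
and since $g^\ast\mathcal{L}_X=(f\circ g)^\ast\mathcal{L}_Y$ this coincides with the Hilbert polynomial of the graph $\Gamma_{f\circ g}\subset Z\times Y$ with respect to $\mathcal{M}_Z\boxtimes\mathcal{L}_Y$. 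The $(n,m)$-boundedness of $Y$ makes the target of $F$ of finite type, so only finitely many Hilbert polynomials occur on the right, hence also on the left. Since each Hilbert-polynomial stratum of the source is quasi-projective by Grothendieck's construction of the Hom-scheme, the source is of finite type, and we conclude that $X$ is $(n,m)$-bounded.

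For (3), I would pass through Brody hyperbolicity. Since $Y$ is compact and Kobayashi hyperbolic, it is Brody hyperbolic, so for any holomorphic map $\phi\colon\mathbb{C}\to X^{\an}_{\mathrm{red}}$ the composition $f\circ\phi\colon\mathbb{C}\to Y^{\an}_{\mathrm{red}}$ is constant with image some point $y\in Y$. Hence $\phi$ takes values in the finite (and thus discrete) set $f^{-1}(y)(\mathbb{C})$, so the connectedness of $\mathbb{C}$ forces $\phi$ to be constant. Therefore $X$ is Brody hyperbolic, and applying Brody's theorem to the compact complex space $X^{\an}_{\mathrm{red}}$ yields that $X$ is Kobayashi hyperbolic. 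I expect no serious obstacle; the only subtle point is in (2), where one must recall that finite type of the Hom-scheme is equivalent to only finitely many Hilbert-polynomial strata being non-empty — this is the step that converts the Hilbert polynomial identity into the desired finite-type statement.
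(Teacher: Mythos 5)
Your proof is correct and follows essentially the same route as the paper: both arguments rest on the ampleness of $f^\ast\mathcal{L}$ for $f$ finite and on the fact that composition with $f$ preserves degrees (resp.\ Hilbert polynomials of graphs, once these are computed with respect to $f^\ast\mathcal{L}$); the paper merely phrases (1) and (2) contrapositively, via unbounded slopes and sequences of maps with distinct Hilbert polynomials. For (3) the paper cites Kwack's theorem but explicitly mentions your Brody-hyperbolicity argument as an alternative, so there is nothing to add.
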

\begin{proof}
   To prove $(1)$, let $\mathcal{L}$ be an ample line bundle on $Y$. Since $f:X\to Y$ is finite, the line bundle  $f^\ast \mathcal{L}$ is ample on $X$. Suppose that $X$ is not algebraically hyperbolic over $k$, so that there is a smooth projective connected curve $C$ over $k$, and infinitely many   morphisms  $f_i:C\to X$ such that the slope $s(f_i) = \deg(f_i)/\mathrm{genus}(C)$ (Definition \ref{def:slope}) tends to infinity as $i$ tends to infinity, where we compute the degree of $f_i:C\to X$ with respect to $f^\ast \mathcal{L}$. For every $i$, let $g_i:= f\circ f_i$, and note that the slope of the finite morphism $g_i:C\to Y$ equals the slope of $f_i$, and is therefore unbounded. This shows that $Y$ is algebraically hyperbolic, and proves $(1)$. 
   
   To prove $(2)$, suppose that $X$ is not $(n,m)$-bounded. We now show that $Y$ is not $(n,m)$-bounded.   Let $V$ be a normal projective variety of dimension $1$,  let $v_1,\ldots,v_m$ be pairwise distinct points in $V(k)$, and let $x_1,\ldots,x_m\in X(k)$ be such that $$\underline{\Hom}_k((V,v_1,\ldots,v_m),(X,x_1,\ldots,x_m))$$ is not of finite type over $k$. Let $f_i\in \Hom((V,v_1,\ldots,v_m),(X,x_1,\ldots,x_m))$ be elements with pairwise distinct Hilbert polynomials. For $i\in \{1,\ldots,m\}$, define $y_i:= f(x_i)$ and $g_i:= f_i\circ f$. Note that the elements $$g_i\in \Hom((V,v_1,\ldots,v_m),(Y,y_1,\ldots,y_m)) $$ have pairwise distinct Hilbert polynomial.   This shows that $$\underline{\Hom}_k((V,v_1,\ldots,v_m), (Y,y_1\ldots,y_m))$$ is not of finite type, so that $Y$ is not $(n,m)$-bounded over $k$.
   
  We note that $(3)$ is due to Kwack \cite[Theorem~1]{Kwack}. (One could also use Brody's lemma and the analogous statement for Brody hyperbolicity. One could also appeal to \cite[Proposition~3.2.11]{Kobayashi}) This concludes the proof.
\end{proof}

\begin{corollary}\label{cor:projectivity_of_Homs} Let $X$ be a projective   scheme over $k$. Let $Y$ be a normal  projective scheme and let $P\in \QQ[t]$ be a non-zero polynomial. Then  the following statements hold.
 \begin{enumerate}
 \item If $X$ is algebraically hyperbolic over $k$, then $\underline{\Hom}^P_k(Y,X)$  is a projective algebraically hyperbolic scheme over $k$ with $\dim\underline{\Hom}^P_k(Y,X) \leq \dim X$.
 \item If $n\geq 1$ and $m\geq 0$ are   integers and $X$ is $(n,m)$-bounded over $k$, then $\underline{\Hom}^P_k(Y,X)$  is a projective $(n,m)$-bounded scheme over $k$  with $\dim\underline{\Hom}^P_k(Y,X) \leq \dim X$.
 \end{enumerate}
\end{corollary}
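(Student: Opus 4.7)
The plan is to combine the purity of $X$ (implied by the hypotheses) with the rigidity-based evaluation arguments of Section \ref{S3} and the stability of algebraic hyperbolicity and $(n,m)$-boundedness under finite morphisms (Proposition \ref{prop:finite_morphisms}).

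First, in both cases the projective scheme $X$ is groupless and pure over $k$: in case $(1)$ this is Corollary \ref{cor:alg_hyp_is_groupless}, and in case $(2)$ this is Proposition \ref{prop:nm_bounded_is_groupless}. Since $\underline{\Hom}_k(-,X)$ splits as a product over connected components of the source, we may assume $Y$ is a normal projective variety. Set $Z := \underline{\Hom}^P_k(Y,X)$. To establish projectivity of $Z$, choose a resolution of singularities $\pi \colon \widetilde{Y} \to Y$. Because $Y$ is normal, any morphism $\widetilde{Y} \to X$ that is constant on the fibres of $\pi$ descends uniquely to $Y$, and ``being constant on each fibre of $\pi$'' is a closed condition on $\underline{\Hom}_k(\widetilde{Y},X)$. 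Thus pullback along $\pi$ identifies $\underline{\Hom}_k(Y,X)$ with a closed subscheme of $\underline{\Hom}_k(\widetilde{Y},X)$. Since $\widetilde{Y}$ is smooth projective and $X$ is pure, Proposition \ref{prop:projectivity_of_Hom} shows that $\underline{\Hom}^Q_k(\widetilde{Y},X)$ is projective for each polynomial $Q$; since only finitely many such $Q$ occur as pullbacks of $P$, we conclude that $Z$ is projective.

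Now fix any $y_0 \in Y(k)$ and consider the evaluation morphism $\mathrm{ev}_{y_0} \colon Z \to X$, $f \mapsto f(y_0)$. By Lemma \ref{lem:applying_rigidity} (applied component-wise to the projective scheme $Z$), the morphism $\mathrm{ev}_{y_0}$ is quasi-finite; being a proper quasi-finite morphism between projective schemes, it is in fact finite. In particular $\dim Z \leq \dim X$, which yields the stated dimension bound.

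Finally, since both algebraic hyperbolicity and $(n,m)$-boundedness are inherited by the source of a finite morphism whose target enjoys the respective property (Proposition \ref{prop:finite_morphisms}), the finite morphism $\mathrm{ev}_{y_0} \colon Z \to X$ transports these properties from $X$ to $Z$, completing the proof of $(1)$ and $(2)$. The main technical obstacle is the projectivity step for normal rather than smooth $Y$: Proposition \ref{prop:projectivity_of_Hom} is stated only for smooth $Y$, and one must verify carefully that the desingularization argument indeed realises $\underline{\Hom}^P_k(Y,X)$ as a closed subscheme of a projective scheme, with a careful accounting of the finitely many Hilbert polynomials on $\widetilde{Y}$ that correspond to the given $P$ on $Y$.
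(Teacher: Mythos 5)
Your proposal is correct and follows essentially the same route as the paper: establish that $X$ is groupless and pure, deduce properness of $\underline{\Hom}^P_k(Y,X)$, apply the rigidity lemma to get finiteness of the evaluation morphisms (hence the dimension bound), and transport algebraic hyperbolicity, respectively $(n,m)$-boundedness, back along that finite morphism via Proposition \ref{prop:finite_morphisms}. The only difference is that you spell out the reduction from normal to smooth $Y$ (realising $\underline{\Hom}_k(Y,X)$ as a closed subscheme of $\underline{\Hom}_k(\widetilde{Y},X)$), a step the paper compresses into ``replacing $Y$ by a desingularization if necessary''; your elaboration is a legitimate and arguably more careful way to justify that reduction.
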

\begin{proof} Let $X$ be as in $(1)$ or $(2)$. Note that, replacing $Y$ by a desingularization if necessary, we may and do assume that $Y$ is smooth.
First, note that $X$ is groupless by Corollary \ref{cor:alg_hyp_is_groupless}) and Proposition \ref{prop:nm_bounded_is_groupless}. Therefore, as $X$ is proper and groupless, it follows that $X$ is  pure  (Proposition \ref{prop:groupless_implies_pure}). Thus,  since $Z:= \underline{\Hom}^P_k(Y,X)$ is a finite type closed subscheme of $\underline{\Hom}_k(Y,X)$, by Corollary \ref{cor:crit_for_qf},  the scheme $ \underline{\Hom}^P_k(Y,X) $ is projective and, for every $y$ in $Y(k)$, the  evaluation morphism $ \mathrm{eval}_y\colon \underline{\Hom}^P_k(Y,X)\to X$ is finite. This implies that $\dim \underline{\Hom}_k^P(Y,X)\leq \dim X$. Now, if $X$ is algebraically hyperbolic (resp. $(n,m)$-bounded), it follows from  Proposition \ref{prop:finite_morphisms} that $\underline{\Hom}_k^P(Y,X)$ is algebraically hyperbolic (resp. $(n,m)$-bounded). This concludes the proof.
\end{proof}

\begin{proposition}
Let $f:X\to Y$ be a finite \'etale morphism of projective varieties. Then the following statements hold.
\begin{enumerate}
\item If $X$ is algebraically hyperbolic over $k$, then $Y$ is algebraically hyperbolic over $k$.
\item If $n\geq 1$ and $m\geq 0$ are integers and $X$ is $(n,m)$-bounded over $k$, then $Y$ is $(n,m)$-bounded over $k$.
\item Assume $k=\mathbb{C}$. If $X^{\an}$ is Kobayashi hyperbolic, then $Y^{\an}$ is Kobayashi hyperbolic.
\end{enumerate}
\end{proposition}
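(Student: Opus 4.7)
The plan is, for each of (1), (2), and (3), to transport the hyperbolicity property of $X$ back to $Y$ by pulling morphisms from a source $T$ into $Y$ along the finite \'etale $f\colon X\to Y$ to morphisms into $X$ whose source is a suitable finite \'etale cover of $T$, via the fiber product.

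For (1), I would fix an ample line bundle $\mathcal{M}$ on $Y$, so that $f^*\mathcal{M}$ is ample on $X$, and let $\alpha,\beta$ witness algebraic hyperbolicity of $X$ with respect to $f^*\mathcal{M}$. Given $g\colon C\to Y$ from a smooth projective connected curve, any connected component $C'$ of $C\times_Y X$ is a smooth projective connected curve equipped with a finite \'etale projection $\pi\colon C'\to C$ of some degree $e\le\deg f$, together with a lift $\tilde g\colon C'\to X$ satisfying $f\tilde g=g\pi$. Applying the defining inequality of algebraic hyperbolicity to $\tilde g$, combining $\deg_{C'}\tilde g^*f^*\mathcal{M}=e\deg_C g^*\mathcal{M}$ with Riemann--Hurwitz $2\,\mathrm{genus}(C')-2=e(2\,\mathrm{genus}(C)-2)$, and dividing through by $e$ yields exactly the same inequality on $C$ with $\beta$ replaced by $\beta/e\le\beta$. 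For (3), I would cite the classical result of Kobayashi that hyperbolicity is preserved under finite \'etale covers in either direction; see \cite[Theorem~3.2.8]{Kobayashi}.

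The substance of the proposition is in (2), where I would argue by contradiction. Supposing $Y$ is not $(n,m)$-bounded, fix a normal projective integral $V$ of dimension $\leq n$, basepoints $v_i\in V(k)$ and $y_i\in Y(k)$, and infinitely many $g_j\in\Hom((V,v_i),(Y,y_i))$ with pairwise distinct Hilbert polynomials. For each $j$, the pullback $W_j:=V\times_{Y,g_j}X\to V$ is a finite \'etale cover of $V$ of degree $d:=\deg f$. The hard part, and the main obstacle, is to collapse the varying sources $W_j$ into a single fixed source: for this I would invoke the topological finite generation of $\pi_1^{\et}(V)$ for a normal proper variety over a field of characteristic zero (a result of Grothendieck; alternatively, reduce to the smooth projective case via a resolution $\widetilde V\to V$ and surjectivity on $\pi_1^{\et}$), which implies that $V$ has only finitely many isomorphism classes of finite \'etale covers of degree $\leq d$. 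A first pigeonhole pass then extracts a single connected finite \'etale cover $\pi\colon W\to V$ of some degree $e\leq d$ occurring as a connected component of $W_j$ for infinitely many $j$; composing any such embedding with the projection $W_j\to X$ yields lifts $\tilde g_j\colon W\to X$ with $f\tilde g_j=g_j\pi$, whose Hilbert polynomials are $e$ times those of $g_j$ and so remain pairwise distinct. A second pigeonhole over the finite set $\prod_{i=1}^m f^{-1}(y_i)$ of possible basepoint images then lets me fix pre-images $w_i\in W(k)$ of $v_i$ and points $\bar x_i\in f^{-1}(y_i)$ such that $\tilde g_j(w_i)=\bar x_i$ for infinitely many $j$; since $W$ is normal (\'etale over normal), projective, and integral of dimension $\leq n$, this exhibits infinitely many elements of $\Hom((W,w_1,\ldots,w_m),(X,\bar x_1,\ldots,\bar x_m))$ with pairwise distinct Hilbert polynomials, contradicting the $(n,m)$-boundedness of $X$.
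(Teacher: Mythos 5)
Your proposal is correct and follows essentially the same route as the paper: part (1) pulls a curve $C\to Y$ back along the \'etale cover and compares degrees and genera, part (2) uses the finiteness of isomorphism classes of finite \'etale covers of $V$ of bounded degree (the paper cites SGA~7 for this, you cite finite generation of $\pi_1^{\et}$ -- the same fact) plus two pigeonhole passes to fix the source $W$ and the basepoint images, and part (3) cites the same theorem of Kobayashi. Your write-up is in fact slightly more careful than the paper's on two minor points -- you pass to a connected component of the fiber product (which need not be connected) and you use Riemann--Hurwitz correctly where the paper's slope computation asserts $\mathrm{genus}(D)=d\cdot\mathrm{genus}(C)$ -- but these do not change the argument.
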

\begin{proof} Let $d =\deg(Y/X)$, let $\mathcal{L}$ be an ample line bundle on $Y$, and note that $f^\ast \mathcal{L}$ is ample on $X$.

To prove $(1)$, assume that $X$ is algebraically hyperbolic over $k$, and let $\alpha$ be a real number (which depends on $\mathcal{L}$ and $f:X\to Y$) such that, for every smooth projective connected curve $C'$ over $k$ and every morphism $f':C'\to X$ we have $s(f') \leq \alpha$. To show that $Y$ is algebraically hyperbolic over $k$, let $C$ be a smooth projective curve over $k$ and let $f:C\to Y$ be a morphism. Let $D:= C\times_Y X$, and let $g:D\to X$ be the natural morphism. Note that $D$ is a smooth projective curve over $k$. We now bound the slope $s(f)$ of $f$ (Definition \ref{def:slope}). Note that $\mathrm{genus}(D) = d \mathrm{genus}(C) > 0$ and that
$$ 
 \alpha \geq \quad s(g) = \frac{\deg_D g^\ast f^\ast \mathcal{L}}{\mathrm{genus}(D)} = \frac{ d \deg_C f^\ast \mathcal{L}}{d \mathrm{genus}(C)} = s(f).
$$ In particular, the slope of $f$ is bounded by $\alpha$. We conclude that $Y$ is algebraically hyperbolic over $k$.

To prove $(2)$,  
    assume that $Y$ is not $(n,m)$-bounded. Let $V$ be a normal projective variety of dimension at most $n$ over $k$, let $v_1,\ldots,v_m$ be points in $V(k)$, let $y_1,\ldots, y_m$ be points in $Y(k)$, and let $f_i:V\to Y$ be a sequence of morphisms with pairwise distinct Hilbert polynomials and $f(v_i) = y_i$. 
   Since $k$ is an algebraically closed field of characteristic zero, it follows from \cite[Expos\'e~II. Theorem 2.3.1]{SGA7I} that the set of $k$-isomorphism classes of (normal) projective varieties $W$ such that there is a  finite \'etale morphism $W\to V$ of degree at most $d$ is finite. Therefore, replacing $(f_i)_{i=1}^\infty$ by a subsequence if necessary, we may and do assume that, for all positive integers $i$, we have $W:=V\times_{f_1,Y,f} X \cong V \times_{f_i,Y,f} X$. Let $g_i:W = V_i\times_Y X\to X$ be the natural morphism. For every positive integer $i$, consider the morphism $W= V_i\times_Y X\to V$ and let $w_i$ be a point lying over $v_i$. Replacing $(f_i)_{i=1}^\infty$ by  a subsequence if necessary,  we may and do assume that $g_i(w_1), \ldots,  g_i(w_m)$ are independent of $i$. Let $x_1 := g_1(w_1), \ldots, x_m = g_1(w_m)$. Note that $g_i$ is an element of 
   \[
   \Hom_k((W,w_1,\ldots,w_m), (X,x_1,\ldots,x_m)).
   \] Since the $f_i$ have pairwise distinct Hilbert polynomial, it follows that the $g_i$ have pairwise distinct Hilbert polynomial. This shows that $X$ is not $(n,m)$-bounded.  

  Note that  $(3)$ follows from \cite[Theorem~3.2.8.(2)]{Kobayashi}. (One can also use Brody's lemma and   the easier to establish analogous statement of $(3)$ for Brody hyperbolicity to prove $(3)$.)
\end{proof}

\section{Finiteness results for bounded varieties} \label{S6}
In this section we prove finiteness results for certain moduli spaces of maps. The main ingredients in this section are the theorem of Hwang--Kebekus--Peternell (Theorem \ref{thm:HKP}), and the properties of Hom-schemes of pure varieties   established in Section \ref{S3}.

Our first lemma gives the finiteness of surjective maps from a given projective variety $Y$ to a bounded projective variety $X$.

\begin{lemma}\label{lem:boundedness_implies_fin_sur} 
Let $n\geq 1$ be an integer. Let $X$ be a projective $n$-bounded variety over $k$. If $Y$ is a (reduced) projective variety of dimension at most $n$, then the set of surjective morphisms $Y\to X$ is finite.
\end{lemma}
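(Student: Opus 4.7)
The plan is to combine the hypothesis of $n$-boundedness on $X$ with the theorem of Hwang--Kebekus--Peternell (Theorem~\ref{thm:HKP}). The $n$-boundedness guarantees that $\underline{\Hom}_k(Y', X)$ is of finite type over $k$ whenever $Y'$ is a normal projective integral scheme of dimension at most $n$. The Hwang--Kebekus--Peternell theorem applies because an $n$-bounded projective variety over $k$ is groupless by Proposition~\ref{prop:nm_bounded_is_groupless}, and it produces that the subscheme $\underline{\mathrm{Sur}}(Y', X)$ parametrizing surjective morphisms is a countable disjoint union of zero-dimensional projective schemes over $k$ whenever $Y'$ is normal projective.

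First I would establish the statement under the additional assumption that $Y$ is a normal projective integral scheme of dimension at most $n$. By $n$-boundedness, $\underline{\Hom}_k(Y, X)$ is of finite type over $k$. The subscheme $\underline{\mathrm{Sur}}(Y, X) \subseteq \underline{\Hom}_k(Y, X)$ parametrizing surjective morphisms is open, hence also of finite type. Combining this with Theorem~\ref{thm:HKP}, we find that $\underline{\mathrm{Sur}}(Y, X)$ is a finite type scheme over $k$ that is expressible as a countable disjoint union of zero-dimensional projective schemes. Such a scheme has only finitely many irreducible components, each zero-dimensional, and is therefore a finite set.

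For the general case, I would reduce to the previous case using the normalization $\nu \colon \tilde Y \to Y$, which is a finite surjection whose source $\tilde Y$ is a finite disjoint union of normal projective integral schemes of dimension at most $n$. The pullback $f \mapsto f \circ \nu$ is injective on morphisms (because $\nu$ is surjective on underlying points and $X$ is separated), and it sends surjective morphisms to surjective morphisms. When $Y$ is integral, $\tilde Y$ is itself normal projective integral and the previous paragraph applies directly to conclude. For a reducible $Y$, one works component by component, using the purity of $X$ (Proposition~\ref{prop:nm_bounded_is_groupless}) and the zero-dimensionality of pointed Hom-schemes to $X$ from smooth projective varieties (Proposition~\ref{prop:zero_dimensionality}) to propagate the finiteness of $\underline{\mathrm{Sur}}(\tilde Y_i, X)$ across the intersection loci of the irreducible components of $Y$.

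The core input is the normal integral case, which is a formal consequence of $n$-boundedness together with the theorem of Hwang--Kebekus--Peternell. I expect the main obstacle, if any, to be the bookkeeping in the reducible case, in particular in propagating the constraints between irreducible components that do not directly intersect.
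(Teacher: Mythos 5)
Your treatment of the integral case is exactly the paper's argument: pass to the normalization (the map $f\mapsto f\circ\nu$ being injective on $\mathrm{Sur}$-sets), use $n$-boundedness to make $\underline{\Hom}_k(Y,X)$ of finite type, use Proposition \ref{prop:nm_bounded_is_groupless} to get grouplessness and hence, via Theorem \ref{thm:HKP}, zero-dimensionality of $\underline{\mathrm{Sur}}_k(Y,X)$, and conclude that a finite type zero-dimensional scheme is finite. That is correct and is where the content lies. The divergence is the reducible case, which the paper disposes of with the single sentence ``we may and do assume that $Y$ is integral,'' whereas you attempt an actual argument; but your sketch as written does not go through. If $f\colon Y\to X$ is surjective and $Y_1,\dots,Y_r$ are the irreducible components of $Y$, the restrictions $f|_{Y_i}$ need not be surjective --- only the union of their images is $X$ --- so ``finiteness of $\underline{\mathrm{Sur}}(\widetilde{Y}_i,X)$'' is not the quantity one can propagate. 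The workable version is: when $X$ is irreducible, some component must surject; the surjections from that component form a finite set by the integral case; and the restrictions to the remaining components are then pinned down by finiteness of \emph{pointed} Hom-sets (Lemma \ref{lem:nm_bounded_equivalence}), propagated along the intersection graph of the components --- which requires $Y$ to be connected. Your worry about ``components that do not directly intersect'' is exactly the real issue: for disconnected $Y$, say $Y=X\sqcup\Spec k$ with $X$ a genus-two curve, the point component may be sent anywhere in $X$ while the other component surjects, so there are infinitely many surjective morphisms $Y\to X$. So the reduction to integral $Y$ is not mere bookkeeping, and neither your proposal nor the paper's proof actually supplies it; aside from this shared loose end, your argument coincides with the paper's.
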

\begin{proof} 
We may and do assume that $Y$ is integral.
Let $Y'\to Y$ be the normalization of $Y$. Note that the natural map of sets $\mathrm{Sur}_k(Y,X) \to \mathrm{Sur}_k(Y',X)$ is injective. Therefore, replacing $Y$ by its normalization if necessary, we may and do assume that $Y$ is normal. Since $X$ is $n$-bounded, it follows that $X$ is groupless (Proposition \ref{prop:nm_bounded_is_groupless}). Therefore, 
by Hwang-Kebekus-Peternell's theorem (Theorem \ref{thm:HKP}), the scheme $\underline{\mathrm{Sur}}_k(Y,X)$ is zero-dimensional. Since $X$ is $n$-bounded and $Y$ is a normal projective variety, the scheme   $\underline{\Hom}_k(Y,X)$ is of finite type over $k$. In particular, the zero-dimensional scheme $\underline{\mathrm{Sur}}_k(Y,X)$ is of finite type over $k$, and is therefore finite over $k$. This concludes the proof.
\end{proof}

\begin{corollary}\label{Aut-fin-bound}   Let $n\geq 1$ be an integer. Let $X$ be an $n$-dimensional (reduced) projective $n$-bounded variety over $k$.
Then  $X$ has only finitely many surjective endomorphisms, and every surjective endomorphism of $X$ is an automorphism of $X$. In particular, $\Aut(X)$ is finite.  
\end{corollary}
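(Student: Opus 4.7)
The plan is to deduce the corollary directly from Lemma \ref{lem:boundedness_implies_fin_sur}. Since $X$ is an $n$-dimensional (reduced) projective $n$-bounded variety over $k$, taking $Y := X$ in that lemma immediately shows that the set of surjective endomorphisms $X\to X$ is finite. This handles the first assertion of the corollary. Moreover, as $\Aut(X)\subseteq \mathrm{Sur}(X,X)$, the finiteness of $\Aut(X)$ will follow formally once I prove that every surjective endomorphism is an automorphism.

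For the remaining (main) assertion, I would use the standard pigeonhole argument on iterates. Given a surjective endomorphism $f\colon X\to X$, each iterate $f^i$ (for $i\geq 1$) is again a surjective endomorphism of $X$, so by the first step the family $\{f^i\}_{i\geq 1}$ is finite. Hence there exist integers $1\leq i<j$ with $f^i = f^j$, and I aim to deduce that $f^{j-i}=\mathrm{id}_X$. Checking on $k$-points (which suffices since $X$ is reduced and $k$ is algebraically closed), given any $x\in X(k)$, surjectivity of $f^i$ provides $y\in X(k)$ with $f^i(y)=x$, and then
\[
f^{j-i}(x) \;=\; f^{j-i}(f^i(y)) \;=\; f^{j}(y) \;=\; f^i(y) \;=\; x.
\]
Thus $f^{j-i}=\mathrm{id}_X$, and $f^{j-i-1}$ is a two-sided inverse of $f$ (using that $f$ commutes with its iterates), so $f$ is an automorphism.

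I do not anticipate any real obstacle here: the first assertion is immediate from Lemma \ref{lem:boundedness_implies_fin_sur}, and the second is the classical monoid-theoretic observation that in a finite semigroup of surjective self-maps of a set, every element is invertible. The only minor subtlety is moving from the equality of maps on $k$-points to an equality of morphisms of schemes, which is harmless because $X$ is a reduced variety over the algebraically closed field $k$.
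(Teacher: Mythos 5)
Your proposal is correct and takes essentially the same route as the paper, whose proof of this corollary is simply the citation ``This follows from Lemma \ref{lem:boundedness_implies_fin_sur}''; the pigeonhole argument on iterates $f^i=f^j$ that you spell out (together with the harmless passage from equality on $k$-points to equality of morphisms for reduced varieties) is exactly the standard way to extract the automorphism and finiteness statements from the finiteness of $\mathrm{Sur}(X,X)$.
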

\begin{proof} 
This follows from Lemma \ref{lem:boundedness_implies_fin_sur}.
\end{proof}

 \begin{proposition} \label{dom-fin-norm}  Let $n\geq 1$ be an integer. Let $X$ be a projective $n$-bounded scheme over $k$. 
If  $Y$ is a (reduced) projective variety of dimension at most $n$ over $k$, then the set of dominant rational maps $Y\dashrightarrow X$ is finite.  
 \end{proposition}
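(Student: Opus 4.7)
The plan is to reduce the statement about dominant rational maps to the already-proven statement about surjective morphisms (Lemma~\ref{lem:boundedness_implies_fin_sur}) by using the purity of $X$ together with a desingularization of $Y$.

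First, I would record the key structural fact: since $X$ is projective and $n$-bounded with $n\geq 1$, Proposition~\ref{prop:nm_bounded_is_groupless} gives that $X$ is groupless, and Proposition~\ref{prop:groupless_implies_pure} then gives that $X$ is pure over $k$. This is what will allow us to turn rational maps into morphisms. We may assume $Y$ is integral (treating irreducible components separately), and then choose a desingularization $\pi\colon \widetilde{Y}\to Y$, which exists since $\operatorname{char}(k)=0$. Then $\widetilde{Y}$ is a smooth projective variety of dimension $\leq n$, and $\pi$ is a surjective (birational) morphism.

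Next, I would construct an injective map from the set of dominant rational maps $Y\dashrightarrow X$ to the set of surjective morphisms $\widetilde{Y}\to X$. Given a dominant rational map $\varphi\colon Y\dashrightarrow X$, the composition $\varphi\circ\pi$ is a dominant rational map from the smooth projective variety $\widetilde{Y}$ to the proper pure variety $X$, which extends to a genuine morphism $\widetilde{\varphi}\colon \widetilde{Y}\to X$ by Lemma~\ref{lem:rat_is_mor}. The image of $\widetilde{\varphi}$ contains the image of the dense open domain of $\varphi\circ\pi$, which is dense in $X$ since $\varphi$ is dominant; as $\widetilde{Y}$ is proper, $\widetilde{\varphi}(\widetilde{Y})$ is closed and dense, hence equal to $X$. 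So $\widetilde{\varphi}$ is a surjective morphism.

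To see that the assignment $\varphi\mapsto \widetilde{\varphi}$ is injective, suppose $\varphi_1,\varphi_2\colon Y\dashrightarrow X$ give $\widetilde{\varphi}_1=\widetilde{\varphi}_2$. Let $U_i\subset Y$ be the domain of definition of $\varphi_i$; then $\varphi_1\circ\pi$ and $\varphi_2\circ\pi$ coincide on $\pi^{-1}(U_1\cap U_2)$. Since $\pi$ is surjective and $X$ is separated, this forces $\varphi_1=\varphi_2$ on the dense open $U_1\cap U_2$, i.e.\ as rational maps. Finally, apply Lemma~\ref{lem:boundedness_implies_fin_sur} to the $n$-bounded scheme $X$ and the normal projective variety $\widetilde{Y}$ of dimension at most $n$: the set of surjective morphisms $\widetilde{Y}\to X$ is finite, and hence so is the set of dominant rational maps $Y\dashrightarrow X$.

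I do not anticipate a serious obstacle here, as every ingredient is already in hand; the only minor point to handle carefully is the injectivity of $\varphi\mapsto\widetilde{\varphi}$, where one must remember that rational maps are identified when they agree on any dense open.
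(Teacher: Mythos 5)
Your proposal is correct and follows essentially the same route as the paper: desingularize $Y$, use that $n$-boundedness implies grouplessness and hence purity of $X$, extend dominant rational maps to surjective morphisms via Lemma~\ref{lem:rat_is_mor}, and conclude with Lemma~\ref{lem:boundedness_implies_fin_sur}. The only difference is that you spell out the injectivity of $\varphi\mapsto\widetilde{\varphi}$, which the paper leaves implicit in the reduction to a desingularization.
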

 \begin{proof}   Replacing $Y$ by a desingularization if necessary, we may and do assume that $Y$ is a smooth projective variety over $k$.
Since $X$ is $n$-bounded, it is pure (Proposition \ref{prop:nm_bounded_is_groupless}). Therefore, every dominant rational map $Y\dashrightarrow X$ extends uniquely to a well-defined surjective morphism $Y\to X$ (Lemma \ref{lem:rat_is_mor}). The result now follows from Lemma \ref{lem:boundedness_implies_fin_sur}.
 \end{proof}
 
 \begin{corollary} \label{Hom-subsch-finite}  Let $n\geq 1$ be an integer. 
 Let $X$ be a projective $n$-bounded scheme over $k$. Let $Y$ be a projective  scheme of dimension at most $n$ over $k$. Let $A\subset X$ be a non-empty reduced closed subscheme of $X$, and let $B\subset Y$ be a non-empty reduced closed subscheme of  $Y$. Then the set \[
 \{f\in \mathrm{Hom}_k(Y,X) \ | \ f(B) = A\}
 \] is finite.
 \end{corollary}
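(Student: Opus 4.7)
The plan is to combine Lemma~\ref{lem:boundedness_implies_fin_sur} with the pointed-rigidity statement in Proposition~\ref{prop:zero_dimensionality}. First, I note that the closed immersion $A\hookrightarrow X$ induces, for every normal projective integral scheme $Z$ of dimension at most $n$, a closed immersion $\underline{\Hom}_k(Z,A)\hookrightarrow\underline{\Hom}_k(Z,X)$; hence the reduced projective scheme $A$ inherits $n$-boundedness from $X$.

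Second, every morphism $f$ in our set restricts to a surjective morphism $g_f:=f|_B\colon B\to A$. Since $B$ is a reduced projective variety of dimension at most $n$ and $A$ is an $n$-bounded projective variety, Lemma~\ref{lem:boundedness_implies_fin_sur} ensures that only finitely many surjective $g\colon B\to A$ arise this way. The problem therefore reduces to bounding, for each such fixed $g$, the fibre $F_g:=\{f\colon Y\to X\mid f|_B=g\}$.

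Third, I reduce to the case $Y$ integral by treating each irreducible component of $Y$ (each meeting $B$, as is implicit in the constraint $f(B)=A$) separately. Let $\pi\colon\widetilde Y\to Y$ be a desingularization, fix a point $b\in B(k)$ with a lift $\widetilde b\in\widetilde Y(k)$, and set $a:=g(b)\in A\subseteq X$. Composition with $\pi$ injects $F_g$ into $\Hom_k((\widetilde Y,\widetilde b),(X,a))$. Two facts now combine. On the one hand, $X$ is groupless by Proposition~\ref{prop:nm_bounded_is_groupless}, hence pure by Proposition~\ref{prop:groupless_implies_pure}; so Proposition~\ref{prop:zero_dimensionality} yields that $\underline{\Hom}_k((\widetilde Y,\widetilde b),(X,a))$ is zero-dimensional. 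On the other, $\widetilde Y$ is normal projective integral of dimension at most $n$, so $n$-boundedness of $X$ gives that $\underline{\Hom}_k(\widetilde Y,X)$ is of finite type, hence so is its closed subscheme $\underline{\Hom}_k((\widetilde Y,\widetilde b),(X,a))$. A zero-dimensional finite-type $k$-scheme has only finitely many $k$-points, so $F_g$ is finite.

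The main obstacle is this last step: both ingredients are essential, since purity alone would permit countably many isolated components of the pointed Hom-scheme, while the finite-type property alone could accommodate positive-dimensional families of extensions. The key point is that $n$-boundedness delivers both simultaneously, via grouplessness on the one hand and the defining finiteness property on the other.
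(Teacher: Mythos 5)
Your argument is essentially the paper's own: the paper likewise first transfers $n$-boundedness to $A$, invokes Lemma~\ref{lem:boundedness_implies_fin_sur} to get finiteness of $\mathrm{Sur}(B,A)$, fixes $b\in B(k)$ so that only finitely many values $a=f(b)$ occur, and then reduces to the finiteness of $\Hom_k((Y,b),(X,a))$, which it obtains by citing Lemma~\ref{lem:nm_bounded_equivalence} --- whose proof is precisely your final step of desingularizing and combining zero-dimensionality (Proposition~\ref{prop:zero_dimensionality}, via purity) with the finite-type property from $n$-boundedness. One caveat: your parenthetical claim that every irreducible component of $Y$ meets $B$ ``as is implicit in the constraint $f(B)=A$'' is false (a component disjoint from $B$ is unconstrained by that condition), but the paper's proof silently elides the same reduction when it applies Lemma~\ref{lem:nm_bounded_equivalence}, stated for integral $Y$, to a general projective $Y$, so this is an issue with the statement as given rather than a defect specific to your argument.
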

 \begin{proof} Note that the inclusion $A\to X$ is finite. Therefore, 
since $X$ is $n$-bounded over $k$, it follows that $A$ is $n$-bounded over $k$ (Proposition \ref{prop:finite_morphisms}). Thus, as $\dim B \leq \dim Y \leq n$ and $B$ is reduced, the set $\mathrm{Sur}(B,A)$ of surjective morphisms $B\to A$ is finite (Lemma \ref{lem:boundedness_implies_fin_sur}).  Fix $b$ in $B(k)$. Then,   the finiteness of $\mathrm{Sur}(B,A)$ implies that the set $$I:=I_b:= \{a\in A \ | \ \textrm{there is a surjective morphism } f:B\to A \textrm{ with } f(b) = a\}$$ is finite.  Now,  if $\Hom_k((Y,B),(X,A))$ denotes the set of morphisms $f:Y\to X$ with $f(B)=A$, then it is clear that 
$$\Hom_k((Y,B),(X,A))\subset \cup_{a\in I} \Hom_k((Y,b), (X,a)).$$ Thus, as the set $I$ is finite, it suffices to show that, for every $a\in X(k)$, the set $$\Hom_k((Y,b),(X,a))$$ is finite. Since an $n$-bounded variety is $(n,1)$-bounded (Remark \ref{bounded-impications}), the latter finiteness follows from  Lemma \ref{lem:nm_bounded_equivalence}.
 \end{proof}

\section{Geometricity theorems} \label{S7}
Note that a variety over $k$  of general type  remains of general type   after any algebraically closed field extension. In other words, the property that a variety is of general type is ``geometric'' (in the sense that it persists over any algebraically closed field extension). Similarly,  by Lemma \ref{lem:ge}, the property that a variety is groupless is also ``geometric'', and the property that a variety is pure is also ``geometric'' (Remark \ref{remark:pure}). Therefore, as the
  Green--Griffiths--Lang's conjecture   says that a projective variety is groupless if and only if it is algebraically hyperbolic, we see that the Green--Griffiths--Lang conjecture predicts that algebraic hyperbolicity is a  ``geometric'' property (i.e., persists over any algebraically closed field extension). We now prove this. 

\begin{theorem}[Algebraic hyperbolicity is a geometric property]\label{thm1}
Let $X$ be a projective scheme over $k$ and let $k \subset L$ be an extension of algebraically closed fields of characteristic zero. 
Then $X$ is algebraically hyperbolic over $k$ if and only if $X_L$ is algebraically hyperbolic over $L$.
\end{theorem}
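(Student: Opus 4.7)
The ``if'' direction is almost immediate from base change. I first observe that algebraic hyperbolicity is independent of the choice of ample line bundle: if $\mathcal{M}$ and $\mathcal{L}'$ are both ample on a projective scheme, then $n\mathcal{L}'-\mathcal{M}$ is ample for some integer $n\geq 1$, hence $\deg_C f^\ast\mathcal{M}\leq n\deg_C f^\ast\mathcal{L}'$ for every morphism $f\colon C\to X$, so one can pass between ample bundles at the cost of rescaling $\alpha$ and $\beta$. Assuming $X_L$ is algebraically hyperbolic over $L$, I may therefore witness this with $\mathcal{L}_L$ for any ample $\mathcal{L}$ on $X$ and some constants $\alpha,\beta$. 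Given a smooth projective connected curve $C$ over $k$ and a morphism $f\colon C\to X$, the base change $C_L$ is a smooth projective connected curve over $L$ (since $C$ is geometrically connected over $k$) of the same genus, and $\deg_{C_L}f_L^\ast\mathcal{L}_L=\deg_Cf^\ast\mathcal{L}$, so the inequality for $X_L/L$ descends to the same inequality for $X/k$.

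The substantive content is the ``only if'' direction. Fix $(\mathcal{L},\alpha,\beta)$ witnessing algebraic hyperbolicity of $X/k$; I claim that $(\mathcal{L}_L,\alpha,\beta)$ witnesses algebraic hyperbolicity of $X_L/L$. Let $C$ be a smooth projective connected curve over $L$ and $f\colon C\to X_L$ a morphism. Writing $L=\varinjlim A$ as the filtered colimit of its finitely generated $k$-subalgebras and applying the standard limit theorems of EGA IV, there exist such an $A\subset L$, a smooth projective morphism $\pi\colon\mathcal{C}\to\Spec A$ with geometrically connected one-dimensional fibers, and an $A$-morphism $\phi\colon\mathcal{C}\to X\times_k\Spec A$ whose base change along $\Spec L\to\Spec A$ recovers $f\colon C\to X_L$. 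Since $k$ is algebraically closed and $A$ is finitely generated over $k$, the Nullstellensatz provides a closed point $s\in\Spec A(k)$. The fiber $\mathcal{C}_s$ is then a smooth projective connected curve over $k$; its genus equals $\mathrm{genus}(C)$ by local constancy of the genus in smooth proper families of curves, and $\deg_{\mathcal{C}_s}\phi_s^\ast\mathcal{L}=\deg_Cf^\ast\mathcal{L}_L$ by local constancy of the degree of a line bundle in a flat proper family of curves. Applying algebraic hyperbolicity of $X/k$ to $\phi_s\colon\mathcal{C}_s\to X$ yields $\deg_Cf^\ast\mathcal{L}_L\leq\alpha(2\,\mathrm{genus}(C)-2)+\beta$, which is precisely the desired bound for $f$.

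The main obstacle is the clean execution of the spreading-out step: arranging a single finitely generated $k$-subalgebra $A\subset L$ over which both $C$ and $f$ are defined and over which the family $\mathcal{C}\to\Spec A$ is smooth, projective, and has geometrically connected one-dimensional fibers. This is routine but requires combining the standard limit theorems with openness of smoothness, properness, and of the locus of geometrically connected fibers, together with constancy of the genus and of degrees of line bundles in smooth proper families. Once this is in place, the specialization to a $k$-point of $\Spec A$ and the application of the hypothesis on $X/k$ are immediate, and the same bound propagates back to $f\colon C\to X_L$ without any change to the constants $\alpha$ and $\beta$.
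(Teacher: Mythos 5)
Your proof is correct and follows essentially the same route as the paper's: spread the curve and the morphism out over a finitely generated $k$-subalgebra of $L$ (a smooth affine $k$-variety $U$ in the paper's notation), specialize to a $k$-point, and transfer the degree bound using constancy of the genus and of the degree of the pulled-back line bundle in the family. The only cosmetic difference is that the paper first invokes grouplessness of $X_L$ to dispose of curves of genus at most one and then argues with slopes for genus at least two, whereas you apply the bound $\alpha(2\,\mathrm{genus}(C)-2)+\beta$ uniformly in all genera with the same constants, which works equally well.
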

 
\begin{proof}  Since $X$ is algebraically hyperbolic over $k$, it is groupless over $k$ (Corollary \ref{cor:alg_hyp_is_groupless}).    In particular, $X_L$ is groupless over $L$ (by Lemma \ref{lem:gr}). In particular, the variety $X_L$  admits no maps from a smooth projective curve of genus at most one. 

Let $\alpha$ be a real number such that, for any $g\geq 2$, any $C\in \mathcal{M}_g(k)$, and any non-constant morphism $f:C\to X$, we have that the slope $s(f)$, as  defined in Definition \ref{def:slope},  satisfies $s(f) \leq \alpha$. Such a real number $\alpha$ exists, as $X$ is algebraically hyperbolic over $k$.  

    Let $C$ be a smooth projective curve  of genus $g$ (at least two) over $L$ and let $g:C\to X_L$ be a non-constant morphism.   Choose a smooth affine variety $U$ over $\Spec k$, choose a smooth proper geometrically connected  genus $g$ curve $\mathcal{C}\to U$ over $U$ with $\mathcal{C}_{L}\cong C$, and choose a morphism  $\mathcal{C} \to X\times U$ of $U$-schemes which equals $C\to X_L$ after pull-back along $\Spec L\to U$.  Let $u\in U(k)$  and consider the induced morphism $f:\mathcal{C}_{u} \to X\times \{u\} \cong X$. Note that the slope of the morphism $f:\mathcal{C}_u\to X$ equals the slope of the morphism $g:C\to X_L$, i.e., $s(g) = s(f)$.  Therefore, since $\mathcal{C}_{u}$ is in $\mathcal{M}_g(k)$ and $\mathcal{C}_{u} \to X$ is non-constant, we   have that $$s(g) = s(f) \leq \alpha.$$ This implies that $X_L$ is algebraically hyperbolic over $L$, and concludes the proof.  
\end{proof}

Motivated by Green--Griffiths--Lang conjecture, and the     similarities between the notions of boundedness and algebraic hyperbolicity established in Sections \ref{section:boundedness} and \ref{S5}, we now establish the geometricity of boundedness. 

To do so, for $g\geq 2$ an integer, let $\mathcal{M}_g$ be the stack of smooth proper curves of genus $g$ over $\ZZ$, and let $\mathcal{U}_g\to \mathcal{M}_g$ be the universal smooth proper geometrically connected curve of genus $g$ over $\mathcal{M}_g$. Recall that $\mathcal{M}_g$ is a smooth finite type separated Deligne-Mumford algebraic stack over $\ZZ$. More generally, for $g\geq 2$ and $m\geq 1$ an integer, let $\mathcal{M}_{g,m}$ be the stack of $m$-pointed smooth proper geometrically connected curves of genus $g$, and let $\mathcal{U}_{g,m}\to \mathcal{M}_{g,m}$ be the universal family.

\begin{theorem} [$1$-boundedness is a geometric property]\label{thm:1b_is_geom}
Let $X$ be a projective scheme over $k$. Then $X$ is $1$-bounded over $k$ if and only if $X_L$ is $1$-bounded over $L$.
\end{theorem}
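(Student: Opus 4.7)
The plan is to handle the two directions separately. The direction $(\Leftarrow)$ is routine: for any smooth projective connected curve $C$ over $k$, flat base change gives $\underline{\Hom}_k(C,X)\times_k L \cong \underline{\Hom}_L(C_L, X_L)$ as a disjoint union of quasi-projective schemes, so finite-typeness over $L$ (which forces only finitely many non-empty components) descends to finite-typeness over $k$.

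For the direction $(\Rightarrow)$, I first observe that $X$ being $1$-bounded over $k$ implies $X$ is groupless and pure over $k$ (Proposition \ref{prop:nm_bounded_is_groupless}), hence $X_L$ is groupless (Lemma \ref{lem:gr}) and pure (Remark \ref{remark:pure}) over $L$. Low genera are immediate: for $g = 0$ the only curve is $\PP^1_L$, a base change from $k$; for $g = 1$, grouplessness of $X_L$ combined with Lemma \ref{lem:groupless_for_proper} forces every map from a genus-one curve to $X_L$ to be constant, so for any such $E$ one has $\underline{\Hom}_L(E, X_L) \cong X_L$, of finite type. The substantive case is $g \geq 2$, which I would address by applying Lemma \ref{lem:properness_of_hom} to the smooth proper representable morphism $\mathcal{U}_{g,k} \to \mathcal{M}_{g,k}$ (with the relative dualizing sheaf as relatively ample line bundle) and the projective pure variety $X$: this yields that for every Hilbert polynomial $P \in \QQ[t]$, the relative Hom stack
\[
\pi_P \colon \underline{\Hom}^P_{\mathcal{M}_{g,k}}(\mathcal{U}_{g,k}, X \times \mathcal{M}_{g,k}) \longrightarrow \mathcal{M}_{g,k}
\]
is \emph{proper}, so its image $Z_P$ is a closed substack of $\mathcal{M}_{g,k}$ defined over $k$. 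In this language, the hypothesis that $X$ is $1$-bounded over $k$ translates to: for every $k$-point $m$ of $\mathcal{M}_{g,k}$, only finitely many $Z_P$ contain $m$.

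The argument finishes with a specialization step. Suppose, for contradiction, that $X_L$ fails to be $1$-bounded at genus $g$, witnessed by some $L$-point $m_L \in \mathcal{M}_g(L)$ lying in infinitely many $Z_{P, L} = Z_P \times_k L$. Let $m_0 \in \mathcal{M}_{g,k}$ be the scheme-theoretic image of $\Spec L \to \mathcal{M}_{g,k}$ and set $V = \overline{\{m_0\}}$. The crucial upgrade is that since each $Z_P$ is \emph{closed}, $m_0 \in Z_P$ forces $V \subset Z_P$, so infinitely many $Z_P$ contain all of $V$. Since $V$ is a non-empty integral closed substack of finite type over the algebraically closed field $k$, it admits a $k$-point $v$, and this $v$ then lies in infinitely many $Z_P$, contradicting $1$-boundedness of $X$ over $k$. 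The main obstacle I anticipate is verifying that Lemma \ref{lem:properness_of_hom} and its corollary ``the image of a proper representable morphism of finite-type DM stacks is a closed substack'' apply cleanly in the stacky setting of $\mathcal{M}_g$; this upgrade from ``constructible'' to ``closed'' for $Z_P$ is precisely what makes the argument work uniformly in the cardinality of $k$ (in particular for $k=\Qbar$, where a naive intersection-of-dense-opens argument would fail).
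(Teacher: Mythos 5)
Your proposal is correct and follows essentially the same route as the paper: both use purity of $X$ (from $1$-boundedness) together with Lemma \ref{lem:properness_of_hom} applied to $\mathcal{U}_{g}\to\mathcal{M}_{g}$ to conclude that each image $Z_P$ (the paper's $Z_d$) is a \emph{closed} substack of $\mathcal{M}_{g,k}$, and then specialize the offending $L$-point to a $k$-point lying in infinitely many of them. The only cosmetic differences are that you pass to the closure of the image of the $L$-point where the paper intersects the closed substacks $Z_{d_i}$ directly, and that you spell out the genus $0$ and $1$ cases which the paper leaves implicit.
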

\begin{proof} Clearly, if $X_L$ is $1$-bounded over $L$, then $X$ is $1$-bounded over $k$. To prove the converse, 
assume that $X_L$ is not $1$-bounded over $L$. Then, there is an integer $g\geq 2$, a smooth projective curve $C$ over $L$ of genus $g$, and an increasing sequence of integers $d_1< d_2 <\ldots$ such that $\underline{\Hom}^{d_i}_L(C, X_L)$ has an $L$-point. 

Since $X$ is $1$-bounded over $k$, it follows that $X$ is pure and groupless (Proposition \ref{prop:nm_bounded_is_groupless}). Define $M:= \mathcal{M}_{g}\otimes_{\ZZ} k$ and $U:= \mathcal{U}_{g}\otimes_{\ZZ} k$. Note that $U\to M$ is a smooth proper representable morphism of smooth finite type separated Deligne-Mumford algebraic stacks over $k$. Moreover, the relative dualizing sheaf $\omega_{U/M}$ is an $M$-relative ample line bundle on $U$ (as $g\geq 2$).

Note that, as $X$ is projective and pure over $k$, the natural  morphism  $\underline{\Hom}_{M}(U, X\times M) \to  M $  satisfies the valuative criterion of properness over $k$ (Lemma \ref{lem:properness_of_hom}). In particular, for any integer $d$, the finite type separated morphism $\phi_d:\underline{\Hom}^d_{M}(U,X\times M) \to  M $ is proper.    Let $Z_d\subset M$ be the stack-theoretic  image of $\phi_d$, and note that $Z_d$ is a closed substack of $M$.

Now, since $\underline{\Hom}^{d_i}_L(C,X_L)$ has an $L$-point  for all $i\in \{1, 2,\ldots\}$, the algebraic stack $Z_{d_i}$ (over $k$) has an $L$-object (corresponding to the curve $C$) for all $i\in \{1,2,\ldots\}$. Define $Z:=\cap_{i=1}^\infty Z_{d_i}$, and note that $Z$ is a closed substack of $\mathcal{M}_g$ with an $L$-point. Since $Z$ is a finite type separated algebraic stack over $k$ with an $L$-point, we conclude that $Z(k)$ is non-empty. This means that $Z_{d_i}(k) \neq \emptyset$ for all $i = 1, 2, \ldots$. Thus, there is a smooth projective curve $C'$ of genus $g$ and a sequence of morphisms $g_i:C'\to X$ of increasing degree. This shows that $X$ is not $1$-bounded over $k$, and concludes the proof.
\end{proof}

The argument to prove Theorem \ref{thm:1b_is_geom} can be used to show that $(1,m)$-boundedness is a geometric property, as we show now. 
  \begin{theorem}[$(1,m)$-boundedness is a geometric property]\label{thm:1m_is_geom} Let $X$ be a projective scheme over $k$, and let $m\geq 1$.
  Then $X$ is $(1,m)$-bounded over $k$ if and only if $X_L$ is $(1,m)$-bounded over $L$.
  \end{theorem}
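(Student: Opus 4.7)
The plan is to adapt the proof of Theorem \ref{thm:1b_is_geom} to the pointed setting. The ``if'' direction is immediate, as the pointed Hom-scheme commutes with base change and the property of being of finite type descends along $\Spec L\to \Spec k$. For the converse, assume $X_L$ is not $(1,m)$-bounded over $L$. Then there exist a normal projective integral curve $C$ over $L$ of some genus $g$, pairwise distinct $c_1,\ldots,c_m\in C(L)$, points $x_1,\ldots,x_m\in X(L)$, and an increasing sequence of positive integers $d_1<d_2<\cdots$ with $\underline{\Hom}_L^{d_i}((C,c_\bullet),(X_L,x_\bullet))(L)\neq\emptyset$ for every $i$. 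Since $X$ is $(1,m)$-bounded over $k$, it is pure and groupless by Proposition \ref{prop:nm_bounded_is_groupless}, and $X_L$ is groupless over $L$ by Lemma \ref{lem:gr}; this forces $g\geq 2$.

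Next, I would set up a moduli stack parametrizing both the source pointed curve and the target points. Set
\[
M:=(\mathcal{M}_{g,m}\otimes_{\ZZ} k)\times_k X^m, \qquad U:=(\mathcal{U}_{g,m}\otimes_{\ZZ} k)\times_k X^m,
\]
so that $U\to M$ is a smooth proper representable morphism of smooth finite type separated Deligne-Mumford stacks over $k$, with the pullback of $\omega_{\mathcal{U}_{g,m}/\mathcal{M}_{g,m}}$ as an $M$-relative ample line bundle (using $g\geq 2$). Let $\sigma_i\colon M\to U$ be the $i$-th universal marked section from $\mathcal{M}_{g,m}$, and let $\tau_i\colon M\to X\times M$ be the section induced by the $i$-th projection $X^m\to X$. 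By Lemma \ref{lem:properness_of_hom}, the morphism $\underline{\Hom}^P_M(U,X\times M)\to M$ is proper for every $P\in\QQ[t]$. Since $X\times M\to M$ is separated, the pointed Hom-substack $\mathcal{H}^P\subset \underline{\Hom}^P_M(U,X\times M)$ defined by the closed conditions $f\circ\sigma_i=\tau_i$ for $i=1,\ldots,m$ remains proper over $M$.

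For each positive integer $d$, let $Z_d\subset M$ denote the stack-theoretic image of the proper morphism $\mathcal{H}^d\to M$; this is a closed substack of $M$ whose geometric points over any algebraically closed field $\Omega\supseteq k$ are precisely the tuples $(C',c'_\bullet,x'_\bullet)$ admitting a pointed morphism $(C',c'_\bullet)\to(X_\Omega,x'_\bullet)$ of degree $d$. By assumption, $(C,c_\bullet,x_\bullet)$ is an $L$-object of every $Z_{d_i}$, so $Z:=\bigcap_{i\geq 1} Z_{d_i}$ is a closed substack of $M$ with non-empty $L$-objects. As $Z$ is a finite type separated algebraic stack over the algebraically closed field $k$, we conclude $Z(k)\neq\emptyset$; a $k$-object of $Z$ yields a smooth projective $m$-pointed curve $(C'',c''_\bullet)$ over $k$ of genus $g$ together with $x''_\bullet\in X(k)^m$ such that the pointed Hom-scheme $\underline{\Hom}_k^{d_i}((C'',c''_\bullet),(X,x''_\bullet))(k)$ is non-empty for every $i$, contradicting the $(1,m)$-boundedness of $X$ over $k$.

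The main technical point to justify is that the pointed locus $\mathcal{H}^P$ is closed inside $\underline{\Hom}^P_M(U,X\times M)$, which follows from the separatedness of $X\times M\to M$ (the locus where two sections of a separated morphism agree is closed). The only other subtlety is that a $k$-point of the stack-theoretic image $Z_d$ lifts to a $k$-point of $\mathcal{H}^d$; this holds because the surjective proper morphism $\mathcal{H}^d\to Z_d$ has non-empty fibers, and each fiber is a finite type algebraic stack over the algebraically closed field $k$, hence admits a $k$-point. Every other step is a direct pointed analogue of an argument already employed in the proof of Theorem \ref{thm:1b_is_geom}.
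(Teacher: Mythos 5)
Your proof is correct and takes essentially the same route as the paper's: both fix the genus $g\geq 2$, invoke Lemma \ref{lem:properness_of_hom} (via purity from Proposition \ref{prop:nm_bounded_is_groupless}) to obtain properness of the degree-$d$ pointed Hom-stack over $\mathcal{M}_{g,m,k}\times X^m$, and then intersect the closed images $Z_{d_i}$ to transport the $L$-point to a $k$-point. The only difference is bookkeeping: the paper records the marked-point images through an evaluation morphism $\phi_d$ into $M\times X^m$, whereas you fold $X^m$ into the base stack and cut out the pointed locus as a closed substack; the resulting closed substacks $Z_d$ coincide.
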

  \begin{proof} We follow the proof of Theorem \ref{thm:1b_is_geom} with only minor modifications.
  Assume that $X_L$ is not $(1,m)$-bounded over $L$. Then, there is an integer $g\geq 2$,  a smooth proper   connected curve $C$ over $L$ of genus $g$, pairwise distinct    points $c_1,\ldots,c_m\in C(L)$, and  points $x_1,\ldots,x_m \in X(L)$ such that  the scheme $$\underline{\Hom}_L((C,c_1,\ldots,c_m), (X_L,x_1,\ldots,x_m))$$ is not of finite type over $L$. We fix $C$, $c_1,\ldots, c_m$, and $x_1,\ldots, x_m$ with this property.
 
  Define $M:= \mathcal{M}_{g,m} \otimes_{\ZZ} k$ and $U:= \mathcal{U}_{g,m}\otimes_{\ZZ} k$.  Note that $U\to M$ is a smooth proper   morphism of smooth finite type separated Deligne-Mumford algebraic stacks over $k$ which is representable by schemes. Moreover, there is an $M$-relative ample line bundle on $U$.
   
  For any integer $d$, let $$\phi_d:\underline{\Hom}_{M}^d( U, X\times M) \to  M \times X^m$$ be the morphism defined by  $$((D,d_1,\ldots,d_m), f:D\to X) \mapsto ((D, d_1,\ldots,d_m), (f(d_1), \ldots, f(d_m)). $$   
  Since $X$ is $(1,m)$-bounded over $k$, it follows that $X$ is pure  (Proposition \ref{prop:nm_bounded_is_groupless}).
Therefore, as $X$ is projective and pure over $k$, the natural  morphism    $$\underline{\Hom}_{M}^d( U, X\times M) \to  M $$  is proper (Lemma \ref{lem:properness_of_hom}). 
  As $X^m$ is separated over $k$ and the composed morphism  $\underline{\Hom}_{M}^d( U, X\times M) \to  M \times X^m\to M$ is proper, the morphism $$\phi_d:\underline{\Hom}_{M}^d( U, X\times M) \to  M \times X^m$$   is proper \cite{Liu2}. 
  
  Let $Z_d$ be the image of $\phi_d$ in $M\times X^m$, and note that  $Z_d$ is a closed substack of $M\times X^m$. Since $\underline{\Hom}_L((C,c_1,\ldots,c_m), (X_L,x_1,\ldots,x_m))$ is not of finite type over $L$ (by assumption), there is a sequence of integers $d_1< d_2 < \ldots$ such that $$ \Hom_L((C,c_1,\ldots,c_m), (X_L,x_1,\ldots,x_m))=\underline{\Hom}_L((C,c_1,\ldots,c_m), (X_L,x_1,\ldots,x_m))(L)$$ is non-empty.  In particular, $Z_{d_i}(L)$ is non-empty. Define  $Z:= \cap_{i=1}^\infty Z_{d_i}$. Then $Z$ is a closed substack of $M\times X^m$ (over $k$) with an $L$-point. Thus, it follows that $Z(k) \neq \emptyset$.    This means precisely that there is a smooth projective connected curve $C'$ of genus $g$ over $k$ , pairwise distinct points $c_1', \ldots, c_m'\in C(k)$, and points $x_1',\ldots,x_m' \in X(k)$ such that $\underline{\Hom}_k((C', c_1', \ldots, c_m'), (X,x_1',\ldots,x_m'))$ is not of finite type over $k$. This shows that $X$ is not $(1,m)$-bounded over $k$, and concludes the proof of the theorem.  
  \end{proof}

 \section{Relating $(1,m)$-boundedness and $(n,m)$-boundedness} \label{S8}
 
 We use the geometricity theorems in the previous section, and a specialization argument, to prove that $(1,1)$-bounded varieties are $(n,1)$-bounded for every $n\geq 1$. To prove our result we use the following application of Bertini's theorem.

   \begin{lemma}\label{lem:bertini}  
  Let $X$ be a variety over an uncountable algebraically closed field $k$. Let $I$ be a countable set and let $(Z_i)_{i\in I}$ be a collection of proper closed subsets of $X$. Let $S\subset \cap_{i\in I} Z_i$ be a finite (possibly empty) closed subset. Then there is a smooth irreducible curve $C\subset X$ such that, for all $i$ in $I$, the set $C\cap Z_i$ is finite and contains $S$.
\end{lemma}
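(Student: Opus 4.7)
My plan is to construct $C$ as a $1$-dimensional ``iterated hypersurface section'' of $X$ through $S$: each step applies Bertini's theorem to keep the section smooth and irreducible, and the countability of $I$ together with the uncountability of $k$ allows us to also keep the section from being contained in any of the $Z_i$.

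First I would reduce to the case where $X$ is projective, smooth, and irreducible. Fixing a locally closed embedding $X \hookrightarrow \mathbb{P}^N_k$ and replacing $X$ by its closure $\bar X$, one adjoins $\bar X \setminus X$ and the closures $\bar Z_i$ to the (still countable) family of proper closed subsets; a smooth irreducible curve in $\bar X$ with the required intersection properties then restricts to such a curve in $X$. Adjoining the singular locus $X^{\mathrm{sing}}$ to the family (which tacitly requires $S \subset X^{\mathrm{sm}}$, since otherwise no smooth $C$ can contain $S$) and passing to the irreducible component of $X$ containing $S$ complete the reduction.

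Now I would induct on $n := \dim X$. The case $n = 1$ is immediate, since every proper closed subset of the smooth irreducible curve $X$ is finite. For $n \geq 2$, fix a very ample line bundle $\mathcal L$ on $X$ and pick $d \gg 0$ so that $\mathcal L^{\otimes d} \otimes \mathcal I_S$ is globally generated on $X \setminus S$ and separates tangent directions along $S$; the associated linear system $V = |\mathcal L^{\otimes d} \otimes \mathcal I_S|$ is then a nonempty projective space over $k$ whose base locus equals exactly $S$. By Bertini's theorem with base points, a dense open $U \subset V$ parametrizes divisors $H$ for which $X_H := X \cap H$ is smooth and irreducible of dimension $n-1$; by construction every $X_H$ contains $S$. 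For each $i$, the subset $W_i \subset V$ of divisors containing $Z_i$ is a proper linear subspace of $V$, because $Z_i \subsetneq X$ together with the base locus of $V$ being just $S$ forces the existence of a section of $\mathcal L^{\otimes d} \otimes \mathcal I_S$ not vanishing on $Z_i$. Since $V$ is a nonempty projective space over the uncountable field $k$ and $I$ is countable, the set $U(k) \setminus \bigcup_{i \in I} W_i(k)$ is nonempty; choosing $H$ in this intersection and applying the inductive hypothesis to $X_H$ with the family $(Z_i \cap X_H)_{i \in I}$ produces the desired curve $C \subset X_H \subset X$. The main technical obstacle is the ``Bertini with base points'' step: one must ensure that for $d \gg 0$ the sheaf $\mathcal L^{\otimes d} \otimes \mathcal I_S$ is positive enough to keep $S$ as the full base locus and to cut $S$ transversely, which is precisely what guarantees smoothness of $X_H$ at the points of $S$.
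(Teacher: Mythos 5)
There is a genuine gap in your inductive step: the locus you remove from the linear system is the wrong one. You exclude the divisors $H$ with $Z_i\subset H$, but what the induction (and, at the last step, the conclusion itself) requires is that each $Z_i\cap X_H$ be a \emph{proper} closed subset of $X_H$, i.e.\ that $X_H\not\subset Z_i$ --- and $Z_i\not\subset H$ does not imply this. Concretely, let $H_0\in U(k)$ be any member with $X_{H_0}$ smooth and irreducible, pick $p\in X(k)\setminus X_{H_0}$, and suppose some $Z_{i_0}$ equals $X_{H_0}\cup\{p\}$ (this contains $S$, since $S$ lies in the base locus). Then no member of $V$ contains $Z_{i_0}$ (two effective divisors in the same class with one containing the other coincide, and $p\notin X_{H_0}$), so $W_{i_0}=\emptyset$ and nothing in your argument prevents the choice $H=H_0$, for which $Z_{i_0}\cap X_H=X_H$ is all of $X_H$. (A smaller slip: if $Z_i\subseteq S$, then every member of $V=|\mathcal L^{\otimes d}\otimes\mathcal I_S|$ contains $Z_i$, so $W_i=V$; harmless because such $Z_i$ are finite, but your stated justification for $W_i\subsetneq V$ is wrong as written.) The fix is to exclude instead, for each $i$, the divisors $H$ containing some $(n-1)$-dimensional irreducible component $D$ of $Z_i$: if $X_H$ is irreducible of dimension $n-1$ and $X_H\subset Z_i$, then $X_H=D$ for such a component, hence $D\subset H$; each such $D$ is positive-dimensional, so not contained in the base locus $S$, and the corresponding locus in $V$ is a proper linear subspace. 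There are countably many such $D$ in total, so the uncountability of $k$ still yields an admissible $H$, and with this correction your induction goes through.

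For comparison, the paper's proof avoids the induction on dimension entirely. Uncountability of $k$ gives a single point $Q\in X(k)\setminus\bigcup_{i\in I}Z_i(k)$, and one application of Bertini produces a general complete intersection curve $C$ through $S\cup\{Q\}$ that is smooth and irreducible; since $Q\in C$ but $Q\notin Z_i$, each $C\cap Z_i$ is automatically a proper closed subset of the irreducible curve $C$, hence finite and containing $S$. The single marked point $Q$ accomplishes globally what your countably many excluded loci in the linear system are meant to accomplish at each step, and it sidesteps the containment issue above because non-containment is witnessed by a point of $C$ rather than by a condition on the divisor $H$.
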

\begin{proof} 
Since $k$ is uncountable and $Z_i\neq X$ for all $i$ in $I$, we have that $X(k) \neq \cup_{i\in I} Z_i(k)$.   Therefore, there is a $k$-point $Q$  in $X$  not contained in any of the $Z_i$. By Bertini's theorem, a general complete intersection curve $C$ containing  the set $S$ and the point $Q$ is smooth and  irreducible. For every $i$ in $I$,   the intersection $Z_i \cap C$ does not contain the specified point $Q$. Therefore, the intersection is a proper closed subset of the irreducible curve $C$. We conclude that, for all $i$ in $I$, the intersection of $C$ and $Z_i$ is a finite subset of $C$ containing $S$.
\end{proof}
 \begin{proposition}  \label{prop:1m_is_nm} Let $m\geq 1$ be an integer. 
 Let $X$ be a $(1,m)$-bounded projective variety. Then, for every integer $n\geq 1$, the projective variety $X$ is $(n,m)$-bounded.  
 \end{proposition}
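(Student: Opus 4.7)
The plan is to argue by contradiction, using a Bertini-type slicing of $Y$ to reduce to the case of curves. Since avoiding countably many proper closed subsets of $Y$ simultaneously requires an uncountable base field, I would first replace $k$ by an uncountable algebraically closed extension $L$ of characteristic zero. This reduction is legitimate: Theorem \ref{thm:1m_is_geom} ensures that $X_L$ is $(1,m)$-bounded over $L$; conversely, $(n,m)$-boundedness descends along algebraically closed extensions, because for any normal projective integral $k$-scheme $Y$ the base-change $Y_L$ is again normal projective integral (as $Y$ is geometrically integral and geometrically normal in characteristic zero), and the base-change compatibility $\underline{\Hom}_k((Y,y_1,\ldots,y_m),(X,x_1,\ldots,x_m))\otimes_k L\cong \underline{\Hom}_L((Y_L,y_1,\ldots,y_m),(X_L,x_1,\ldots,x_m))$ holds. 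It therefore suffices to prove the statement after replacing $(k,X)$ by $(L,X_L)$.

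Assume for a contradiction that $X$ is not $(n,m)$-bounded. I would then choose a normal projective integral scheme $Y$ of dimension at most $n$, pairwise distinct points $y_1,\ldots,y_m\in Y(k)$, and points $x_1,\ldots,x_m\in X(k)$ such that the scheme $\underline{\Hom}_k((Y,y_1,\ldots,y_m),(X,x_1,\ldots,x_m))$ is not of finite type. Since this Hom-scheme decomposes as a disjoint union over Hilbert polynomials of quasi-projective (hence finite type) strata, infinitely many of these strata must be non-empty, which produces a sequence of morphisms $f_1,f_2,\ldots\colon Y\to X$ with pairwise distinct Hilbert polynomials and $f_i(y_l)=x_l$ for all $i$ and $l$. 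For each pair $i\neq j$, the equalizer $Z_{ij}\subset Y$ of $f_i$ and $f_j$ is a closed subscheme which is properly contained in $Y$ (since $X$ is separated, $f_i\neq f_j$, and $Y$ is irreducible) and which contains the finite set $S:=\{y_1,\ldots,y_m\}$ (since $f_i(y_l)=x_l=f_j(y_l)$).

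Finally, I would apply Lemma \ref{lem:bertini} to the countable family $(Z_{ij})_{i\neq j}$ and the finite set $S$ to obtain a smooth irreducible curve $C\subset Y$ containing $y_1,\ldots,y_m$ with $C\cap Z_{ij}$ finite for every $i\neq j$. In particular $C\not\subset Z_{ij}$, so the restrictions $f_i|_C$ are pairwise distinct morphisms from the smooth projective integral curve $C$ to $X$, each sending $y_l$ to $x_l$. This exhibits an infinite subset of $\mathrm{Hom}_k((C,y_1,\ldots,y_m),(X,x_1,\ldots,x_m))$, contradicting the $(1,m)$-boundedness of $X$ via Lemma \ref{lem:nm_bounded_equivalence}. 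The principal non-formal step is the uncountability reduction: without it, a single smooth curve through the prescribed marked points cannot be forced to simultaneously avoid infinitely many equalizer loci, and it is precisely Theorem \ref{thm:1m_is_geom} that licenses the passage to an uncountable base field.
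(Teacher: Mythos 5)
Your proposal is correct and follows essentially the same route as the paper: reduce to an uncountable base field via the geometricity of $(1,m)$-boundedness (Theorem \ref{thm:1m_is_geom}), take an infinite family of pointed morphisms from some $Y$, form the countably many equalizer loci, and apply the Bertini lemma (Lemma \ref{lem:bertini}) to slice down to a curve through the marked points, contradicting Lemma \ref{lem:nm_bounded_equivalence}. Your explicit justification of the descent direction and of why non-finite-typeness yields infinitely many distinct morphisms is slightly more careful than the paper's, but the argument is the same.
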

 \begin{proof}
By the geometricity of $(1,m)$-boundedness (Theorem \ref{thm:1m_is_geom}), for any algebraically closed field extension $k\subset L$, the projective scheme $X_L$ is $(1,m)$-bounded over $L$ (Theorem \ref{thm:1b_is_geom}). Therefore, to prove that $X$ is $(n,m)$-bounded, we may and do assume that $k$ is uncountable.

Now,  assume that $X$ is not $(n,m)$-bounded over $k$. Let $Y$ be a projective variety of dimension at most $n$ over $k$, let $y_1,\ldots,y_m\in Y(k)$ be pairwise distinct points, let $x_1,\ldots,x_m\in X(k)$, and let $f_1, f_2, f_3, \ldots$ be pairwise distinct morphisms $Y\to X$ such that $$f_i\in \Hom_k((Y,y_1,\ldots,y_m), (X,x_1,\ldots,x_m)).$$ (We will show that this leads to a contradiction.) 

For any pair of positive integers, define $Y^{n,m} := \{y \in Y\ | \ f_n(y) = f_m(y)\}$. Note that $Y^{n,m}$ is a proper closed subset of $Y$ which contains the points $y_1,\ldots,y_m$. (The fact that $Y^{n,m}\neq Y$ is equivalent to the fact that $f_n\neq f_m$.)  

 Let $I = \ZZ_{\geq 1}\times \ZZ_{\geq 1}\setminus \Delta$ be the set of pairs of   distinct positive integers.  For $i$  in $I$ (corresponding to $(n,m)$), define $Z_i:= Y^{n,m}$. As the collection of proper closed subsets $(Z_i)_{i\in I}$ is countable and contains $\{y_1,\ldots,y_m\}$, it follows from Lemma \ref{lem:bertini} that there is a smooth projective connected curve $C$ in $Y$ such that the intersection of $C$ with any $Z_i$ is finite and contains $\{y_1,\ldots,y_m\}$. This means that the morphisms $f_i$ restricted to $C$ are all still pairwise distinct. Thus, their restrictions $f_i|_C$ give rise to  pairwise distinct elements of $\Hom_k((C,y_1,\ldots,y_m), (X,x_1,\ldots,x_m))$. This implies that $$\Hom_k((C,y_1,\ldots,y_m), (X,x_1,\ldots,x_m))$$ is infinite. By Lemma \ref{lem:nm_bounded_equivalence}, we see that $\underline{\Hom}_k((C,c_1,\ldots,c_m),(X,x_1,\ldots,x_m))$ is not of finite type. In particular, $X$ is not $(1,m)$-bounded over $k$. This contradicts our hypothesis.
 \end{proof}
 
 \begin{corollary}\label{cor:1m_is_n1}
 Let $X$ be a projective variety over $k$. Assume that there is an integer $m\geq 1$ such that $X$ is $(1,m)$-bounded. Then, for every $n\geq 1$, the projective variety   $X$ is $(n,1)$-bounded over $k$.
 \end{corollary}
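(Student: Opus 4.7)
The plan is to chain together the two propositions immediately preceding the corollary.  The hypothesis gives us $(1,m)$-boundedness for some specific $m\geq 1$, while the conclusion concerns $(n,1)$-boundedness; so we must first collapse the second index down to $1$, and then enlarge the first index up from $1$ to arbitrary $n$.

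First, I would invoke Proposition \ref{prop:1m_is_11}, which asserts that the existence of a single $m\geq 1$ with $X$ being $(1,m)$-bounded is equivalent to $(1,1)$-boundedness. Since we are assuming $X$ is $(1,m)$-bounded for some $m\geq 1$, we conclude that $X$ is $(1,1)$-bounded over $k$. This is the step that makes use of the finite étale covering trick in Proposition \ref{prop:1m_is_11}: one passes from a pointed curve $(C,c)$ to a higher-genus cover $(D,d_1,\ldots,d_m)$ mapping to $(X,x,\ldots,x)$, exploiting the $(1,m)$-boundedness hypothesis.

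Next, I would apply Proposition \ref{prop:1m_is_nm} with the parameter ``$m$'' specialized to $1$. Since $X$ has just been shown to be $(1,1)$-bounded, the proposition (which was proved by a Bertini-and-specialization argument combined with the geometricity of $(1,m)$-boundedness) yields that $X$ is $(n,1)$-bounded for every integer $n\geq 1$. This gives exactly the conclusion of the corollary.

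There is no real obstacle here: both inputs are already on the table. The only point that needs a quick sanity check is that Proposition \ref{prop:1m_is_nm} is stated for every $m\geq 1$, including $m=1$, which it is. Thus the corollary follows in two lines.
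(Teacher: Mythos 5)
Your proof is correct and is exactly the paper's own argument: first reduce to $(1,1)$-boundedness via Proposition \ref{prop:1m_is_11}, then apply Proposition \ref{prop:1m_is_nm} with its parameter set to $1$ to conclude $(n,1)$-boundedness for all $n\geq 1$. No gaps; the sanity check that Proposition \ref{prop:1m_is_nm} covers the case $m=1$ is the only point worth noting, and you noted it.
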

 \begin{proof}
 Since $X$ is $(1,m)$-bounded, it is $(1,1)$-bounded (Proposition \ref{prop:1m_is_11}). Therefore, it is $(n,1)$-bounded (Proposition \ref{prop:1m_is_nm}).
 \end{proof}

 \begin{theorem}\label{thm:1m_is_nl}
 Let $X$ be a projective variety over $k$. Then the following are equivalent.
 \begin{enumerate}
 \item There exist $n\geq 1$ and $m\geq 1$ such that $X$ is $(n,m)$-bounded.  
 \item For every $n\geq 1$ and $m\geq 1$, we have that $X$ is $(n,m)$-bounded over $k$. 
 \end{enumerate} 
 \end{theorem}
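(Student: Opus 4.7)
The direction $(2) \implies (1)$ is immediate from the definitions, so the plan is to establish $(1) \implies (2)$ by chaining together the results proven earlier in this section and in Section \ref{section:boundedness}.

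First, suppose $X$ is $(n_0, m_0)$-bounded for some integers $n_0, m_0 \geq 1$. The key observation is that $(1)$ can immediately be upgraded so that the parameter $n$ is $1$: since every normal projective integral scheme of dimension at most $1$ is in particular of dimension at most $n_0$, the very definition of $(n_0, m_0)$-boundedness yields that $X$ is $(1, m_0)$-bounded. Having reduced to this one-dimensional setting, one can then invoke Proposition \ref{prop:1m_is_11} to conclude that $X$ is in fact $(1, 1)$-bounded.

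Second, given arbitrary integers $n \geq 1$ and $m \geq 1$, the plan is to climb back up. By Remark \ref{bounded-impications} (applied iteratively to go from $m=1$ up to the desired value), the $(1,1)$-boundedness of $X$ implies that $X$ is $(1, m)$-bounded. Finally, applying Proposition \ref{prop:1m_is_nm} to this $(1,m)$-boundedness produces the desired $(n, m)$-boundedness of $X$.

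In this theorem there is essentially no obstacle to overcome, as all the substantial work has been done in the preceding results: the Bertini-style specialization argument in Proposition \ref{prop:1m_is_nm} that lifts boundedness from curves to higher-dimensional varieties, the \'etale covering trick in Proposition \ref{prop:1m_is_11} that eliminates the dependence on $m$ in the one-dimensional case, and the monotonicity observations in Remark \ref{bounded-impications}. The theorem is thus essentially a bookkeeping synthesis of these ingredients, packaging them into a single clean equivalence.
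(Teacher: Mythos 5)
Your proposal is correct and follows essentially the same route as the paper, which deduces the theorem from Corollary \ref{cor:1m_is_n1} (itself a chain of Proposition \ref{prop:1m_is_11} and Proposition \ref{prop:1m_is_nm}); the only cosmetic difference is that you raise $m$ via Remark \ref{bounded-impications} before applying Proposition \ref{prop:1m_is_nm}, whereas the paper applies that proposition at $m=1$ and raises $m$ afterwards.
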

 \begin{proof}
This follows from Corollary \ref{cor:1m_is_n1}. 
 \end{proof}

 \section{Relating $1$-boundedness, boundedness, and  algebraic hyperbolicity} \label{S9}
 The property of being bounded has to be (by definition) ``tested'' on maps from all projective varieties. In this section, we prove that a $1$-bounded variety is in fact bounded, i.e., one can ``test'' boundedness of a variety on maps from curves. This result is an algebraic analogue of the complex-analytic fact that one can ``test'' the \emph{Borel hyperbolicity} of a variety on holomorphic maps from a curve \cite[Theorem.~1.5]{JKuch}.  To prove the main result of this section, we start with a simple intersection-theoretic lemma.
 \begin{lemma} \label{Claim}
 Let $D$ be a very ample divisor on a reduced projective scheme $Y$ over $k$ of dimension at least two.  Let $\kappa$ be a positive real number. Then, the set of numerical equivalence classes of big base-point free divisors $L$ with intersection number $L \cdot D^{\dim Y -1} \leq \kappa$ is finite. 
 \end{lemma}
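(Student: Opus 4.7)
The plan is to realize the set in question as the intersection of a bounded subset of $N^1(Y)_{\RR}$ with the discrete lattice coming from $\NS(Y)$, and conclude via the standard fact that a bounded subset of a finitely generated lattice is finite. Since $Y$ is projective, $\NS(Y)$ is a finitely generated abelian group, so $\Lambda := \NS(Y)/\mathrm{tors}$ is a lattice of full rank inside the finite-dimensional real vector space $N^1(Y)_{\RR}$. Moreover, any base-point free divisor is nef, so the numerical class of such an $L$ lies in the closed convex cone $\mathrm{Nef}(Y) \subset N^1(Y)_{\RR}$. Thus the set of numerical classes in the statement is contained in
\[
\mathcal{S} := \{\alpha \in \mathrm{Nef}(Y) : \alpha \cdot D^{\dim Y - 1} \leq \kappa\} \cap \Lambda,
\]
and it suffices to show that $\mathcal{S}$ is bounded in $N^1(Y)_{\RR}$.

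Let $\ell\colon N^1(Y)_{\RR}\to \RR$ be the continuous linear functional $\alpha \mapsto \alpha \cdot D^{\dim Y-1}$. A standard convex-geometry argument shows that the closed convex set $\mathrm{Nef}(Y)\cap \ell^{-1}([0,\kappa])$ is bounded as soon as $\ell$ is strictly positive on $\mathrm{Nef}(Y)\setminus\{0\}$; indeed, any unbounded sequence would, after rescaling, limit to a nonzero nef class on which $\ell$ vanishes.

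The crux of the argument is therefore to establish this strict positivity, which is the one place the ampleness (not merely the nefness) of $D$ is used substantively; it is essentially a consequence of Kleiman's numerical criterion of ampleness. A direct proof proceeds by Hodge index after reducing to dimension two: cut $Y$ with $\dim Y - 2$ general members of the very ample linear system $|D|$ to produce, by Bertini for the reduced scheme $Y$, a reduced projective surface $S \subset Y$ on which $D|_S$ is very ample. The projection formula gives $\alpha \cdot D^{\dim Y -1} = (\alpha|_S)\cdot (D|_S)$ for every $\alpha \in N^1(Y)_{\RR}$. If $\alpha$ is nef and this intersection number vanishes, then the Hodge index theorem on $S$ (applied after passing to a resolution if needed) forces $\alpha|_S$ to be numerically trivial on $S$; a resolution-of-singularities together with the projection formula then transfers the conclusion to $Y$, giving $\alpha \equiv 0$. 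Granted this, $\mathcal{S}$ is bounded, hence finite by discreteness of $\Lambda$, proving the lemma. The main obstacle is precisely the strict positivity step; the remaining content is the lattice-theoretic packaging.
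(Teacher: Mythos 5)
Your overall strategy---show that the slice $\{\alpha\in\mathrm{Nef}(Y): \alpha\cdot D^{\dim Y-1}\le \kappa\}$ is a bounded convex region and intersect it with the N\'eron--Severi lattice---is legitimate and is essentially the alternative proof recorded in Remark \ref{referee} of the paper (there phrased with the big cone rather than the nef cone); it differs from the paper's own proof, which instead runs an induction on dimension, treating the surface case by Hodge index and descending via the Lefschetz hyperplane theorem. As you correctly isolate, everything hinges on the strict positivity of $\ell(\alpha)=\alpha\cdot D^{\dim Y-1}$ on $\mathrm{Nef}(Y)\setminus\{0\}$.

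It is exactly at that step that your argument has a genuine gap. Cutting with $\dim Y-2$ \emph{general} members of $|D|$ produces one surface $S$, and Hodge index does give $\alpha|_S\equiv 0$; but numerical triviality of $\alpha$ on $Y$ must be tested against \emph{every} irreducible curve $Z\subset Y$, and a general complete-intersection surface does not contain a prescribed $Z$. Neither resolution of singularities nor the projection formula bridges this: the projection formula only compares intersection numbers of cycles that actually live on $S$ (or pull back to it), so "$\alpha|_S\equiv 0$ for one general $S$" gives no information about $\alpha\cdot Z$ for curves $Z\not\subset S$. (Compare the fiber class of a fibered threefold: it can restrict to zero on many surfaces without being numerically trivial.) To close the gap you need one of two additional inputs: either the Grothendieck--Lefschetz theorem, giving injectivity of $\NS(Y)\to\NS(S)$ up to torsion for $\dim Y\ge 3$ --- which is precisely the tool the paper invokes --- or, for each irreducible curve $Z\subset Y$, a complete-intersection surface $S_Z$ cut out by members of $|mD|$, $m\gg 0$, \emph{chosen to contain} $Z$, on which you then rerun the Hodge index argument to get $\alpha\cdot Z=(\alpha|_{S_Z})\cdot Z=0$. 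With either repair the rest of your proof (closedness of the nef cone, the rescaling/compactness argument for boundedness, discreteness of the lattice) goes through.
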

 \begin{proof} (See Remark \ref{referee} below for an alternative proof.) 
 Let $f:\widetilde{Y}\to Y$ be a projective birational surjective morphism with $\widetilde{Y}$ a smooth projective variety over $k$.  By the projection formula, we have that $(f^\ast L) \cdot (f^\ast D)^{\dim \widetilde{Y}-1} = L\cdot D^{\dim Y-1} \leq \kappa$. Therefore, since $f^\ast:\NS(Y)\to \NS(\widetilde{Y})$ is injective, replacing $Y$ by $\widetilde{Y}$ if necessary, we may and do     assume that $Y$ is a  smooth projective    variety over $k$. Moreover, we may and do assume that $Y$ is connected.

 Suppose that  $\dim Y=2$. Define $e:= L \cdot L$, $g:=L \cdot D $, and $h:=D \cdot D $. Then $(hL-gD) \cdot D=0$. Also,  by the Hodge index theorem, the inequality $(hL-gD)^2 \leq 0$ holds. Therefore,  since $h$ is fixed and $g\leq \kappa$, we conclude that  $$L \cdot L = e = \frac{g^2}{h} \leq \frac{\kappa^2}{h}$$ is bounded from above. Now, since $L$ is big and base-point free, the general member $C$ of the linear system defined by $L$ is a smooth projective connected curve. Let $\mathrm{genus}(L)$ be the genus of $C$. Since $2\mathrm{genus}(L)-2=L\cdot(K_Y+L)$, we see that  $2\mathrm{genus}(L)-2$ is also bounded.  Thus, the lemma holds when $\dim Y\leq 2$.  

 Therefore, to prove the lemma,  we may and do assume that $\dim Y\geq 3$. In this case, by  the Lefschetz hyperplane theorem, the induced map from the Picard group of $Y$ to the Picard group of a smooth hyperplane section is injective.  Therefore, the lemma follows from induction on $\dim Y$. 
 \end{proof}
 
 \begin{remark}\label{referee}
The refere has pointed out to us that one can show that, with the notation of Lemma \ref{Claim},  the set of numerical equivalence classes of \emph{big}   divisors $L$ with intersection number $L \cdot D^{\dim Y -1} \leq \kappa$ is finite. Indeed, as $\mathrm{char}(k)=0$, 
the linear form $L\mapsto L\cdot D^{\dim Y -1}$  defines a half-space in the N\'eron-Severi group of $Y$ whose intersection with the big cone is a bounded convex subset, hence finite. We  have chosen to leave Lemma \ref{Claim} as stated, because its proof is also valid in positive characteristic (replacing the resolution of singularities $\widetilde{Y}\to Y$ by an alteration).
 \end{remark}
 
 \begin{theorem}[1-bounded implies bounded] \label{thm:1_implies_n}
 Let $X$ be a $1$-bounded projective scheme over $k$. Then $X$ is bounded over $k$.
 \end{theorem}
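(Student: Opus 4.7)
The plan is to prove by induction on $n \geq 1$ that any $1$-bounded projective scheme $X$ is $n$-bounded; the base case $n=1$ is precisely the hypothesis. For the inductive step, assume $X$ is $(n-1)$-bounded, and let $Y$ be a normal projective integral scheme of dimension $n \geq 2$ over $k$; the goal is to show that $\underline{\Hom}_k(Y, X)$ is of finite type. Since a desingularization $\tilde Y \to Y$ realizes $\underline{\Hom}_k(Y, X)$ as a closed subscheme of $\underline{\Hom}_k(\tilde Y, X)$ (the condition of factoring through $Y$ being closed), I may assume $Y$ itself is smooth and projective.

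Fix a very ample divisor $D$ on $Y$ and a very ample line bundle $\mathcal L$ on $X$. For a morphism $f\colon Y \to X$, the Hilbert polynomial of the graph $\Gamma_f \subset Y \times X$ with respect to the polarization $\mathcal O_Y(1)\boxtimes \mathcal L$ is determined, via asymptotic Riemann--Roch on $Y$, by the intersection numbers $D^{n-i}\cdot (f^*\mathcal L)^i$ for $i=0,1,\dots,n$ together with fixed Chern data on $Y$. It therefore suffices to bound these intersection numbers uniformly in $f$. By Bertini, a general member $H \in |D|$ is a smooth projective integral scheme of dimension $n-1$, and restriction $f \mapsto f|_H$ defines a morphism $H \to X$. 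The inductive hypothesis ensures that $\underline{\Hom}_k(H, X)$ is of finite type, so the Hilbert polynomials of the restrictions $f|_H$ are uniformly bounded. The projection formula $(D|_H)^{n-1-i} \cdot (f|_H^*\mathcal L)^i = D^{n-i} \cdot (f^*\mathcal L)^i$ then translates these bounds into uniform upper bounds on $D^{n-i} \cdot (f^*\mathcal L)^i$ for all $1 \le i \le n-1$; in particular $\kappa := \sup_f D^{n-1} \cdot f^*\mathcal L < \infty$.

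The only remaining intersection number is the top self-intersection $(f^*\mathcal L)^n$. Since $\mathcal L$ is very ample, $f^*\mathcal L$ is base-point free. If $f$ is not generically finite onto its image, then $(f^*\mathcal L)^n = 0$ and there is nothing more to bound. Otherwise $f^*\mathcal L$ is big and base-point free, and Lemma~\ref{Claim} applies: only finitely many numerical equivalence classes of such line bundles on $Y$ satisfy $L \cdot D^{n-1} \le \kappa$, so $(f^*\mathcal L)^n$ can take only finitely many values as $f$ varies. Combining everything, the Hilbert polynomial of $\Gamma_f$ lies in a finite set, so $\underline{\Hom}_k(Y, X)$ is of finite type, completing the inductive step. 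The main obstacle in the strategy is precisely the passage from the mixed intersection numbers $D^{n-i}(f^*\mathcal L)^i$ (which are directly controlled by restriction to $H$) to the pure top power $(f^*\mathcal L)^n$, which is invisible from $H$ and requires Lemma~\ref{Claim} as the essential input.
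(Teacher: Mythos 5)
Your strategy is essentially the paper's: induct on $n$, use a general hyperplane section $H\in|D|$ and the $(n-1)$-dimensional hypothesis to obtain a uniform bound $\kappa$ on $f^\ast\mathcal{L}\cdot D^{n-1}$, and feed that bound into Lemma \ref{Claim}; the paper merely runs the argument in the contrapositive (pairwise distinct Hilbert polynomials would force $f_i^\ast\mathcal{O}(1)\cdot D^{n-1}\to\infty$, contradicting $(n-1)$-boundedness on the hyperplane section). However, your final step has a gap. The premise that the Hilbert polynomial of $\Gamma_f$ is determined by the intersection numbers $D^{n-i}\cdot(f^\ast\mathcal{L})^i$ together with fixed Chern data of $Y$ is not correct: Hirzebruch--Riemann--Roch expresses $\chi\bigl(Y,d(D+f^\ast\mathcal{L})\bigr)$ through the pairings $(D+f^\ast\mathcal{L})^j\cdot\mathrm{td}_{n-j}(Y)$, which involve powers of $f^\ast\mathcal{L}$ against Todd classes and not only against powers of $D$; already on a surface of Picard rank at least two, two nef classes can agree on $L\cdot D$ and $L^2$ yet differ on $L\cdot K_Y$, hence have different Hilbert polynomials. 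What is true, and what the paper uses, is that the Hilbert polynomial is determined by the \emph{numerical equivalence class} of $f^\ast\mathcal{L}$; so what you must actually show is that only finitely many such classes occur.

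With that corrected target, your generically finite case is fine, since Lemma \ref{Claim} bounds the set of numerical classes of big base-point free divisors of degree at most $\kappa$. But in the non-generically-finite case the observation ``$(f^\ast\mathcal{L})^n=0$, so there is nothing more to bound'' does not close the argument: you have bounded a tuple of intersection numbers, not the set of numerical classes, and Lemma \ref{Claim} is unavailable because $f^\ast\mathcal{L}$ is not big. The repair is cheap and in fact dissolves your case division: $f^\ast\mathcal{L}$ is always nef, the linear form $L\mapsto L\cdot D^{n-1}$ is strictly positive on the nonzero nef cone, so $\{L\ \mathrm{nef}:L\cdot D^{n-1}\le\kappa\}$ is a compact subset of $\NS(Y)_{\RR}$ containing only finitely many integral classes --- this is the nef-cone analogue of Remark \ref{referee} --- and the bound $\kappa$ from the hyperplane restriction alone already finishes the proof. (To be fair, the paper's own proof applies Lemma \ref{Claim} to the classes $f_i^\ast\mathcal{O}(1)$ without verifying bigness, so it implicitly leans on the same observation.)
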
 
 \begin{proof} 
We show by induction  on $n\geq 1$ that $X$ is $n$-bounded over $k$.   By assumption, the projective scheme $X$ is $1$-bounded. Thus,  let $n  \geq 2$ and assume that    $X$ is $(n-1)$-bounded. 
 Note that, for $Y$ a projective normal variety over $k$,  the Hilbert polynomial of a morphism $f: Y \to X$ is uniquely determined by the numerical equivalence class of $f^* {\mathcal O} (1)$. Indeed, by Hirzebruch-Riemann-Roch, $\chi(f^*({\mathcal O} (d)) = \deg \mathrm{ch}(f^*({\mathcal O} (d)) \cdot \tau_Y$, where $\tau_Y$ is the refined Todd class (as in Fulton \cite{Fulton}), and the Chern character depends only on the first Chern class, which is determined by the numerical equivalence class of $f^* {\mathcal O} (1)$. 
 
 Assume  that  $X$ is not $n$-bounded. Then, there is a projective  normal  $n$-dimensional variety $Y$ over $k$ and morphisms $f_1, f_2, f_3,\ldots$ from $Y$ to $X$    with pairwise distinct Hilbert polynomials.
Note that the numerical equivalence classes of $f_1^* {\mathcal O} (1), f_2^\ast \mathcal{O}(1), \ldots$ are pairwise distinct. Now, to arrive at a contradiction, fix a very ample divisor class $D$ on $Y$. From Lemma \ref{Claim} it follows that  $$f_i^*({\mathcal O} (1)) \cdot D^{\dim Y -1} \to \infty, \quad i\to \infty.$$  In particular, we have that $f_i^*({\mathcal O} (1))|_D \cdot D|_D^{\dim Y -2} \to \infty$.  Since $D$ is a  smooth projective variety with $\dim D = n-1 <n$,   this contradicts the fact that $X$ is $(n-1)$-bounded. We conclude that $X$ is $n$-bounded, as required. 
 \end{proof}
 As an application of our results, we obtain that $(n,m)$-boundedness (and thus boundedness) is a geometric property. 
 \begin{corollary}[Boundedness is a geometric property] Let $n\geq 1$ and $m\geq 0$ be integers.
 Let $k\subset L$ be an extension of algebraically closed fields of characteristic zero. A projective variety $X$ over $k$ is $(n,m)$-bounded over $k$ if and only if $X_L$ is $(n,m)$-bounded over $L$.
 \end{corollary}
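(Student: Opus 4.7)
The plan is to bootstrap geometricity of $(n,m)$-boundedness from the already-established geometricity of $1$-boundedness (Theorem \ref{thm:1b_is_geom}) and of $(1,m)$-boundedness (Theorem \ref{thm:1m_is_geom}), using the fact that both families of notions collapse, for each fixed $m$, to their respective $n=1$ cases.

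I would treat the two cases $m=0$ and $m\geq 1$ separately. When $m\geq 1$, Proposition \ref{prop:1m_is_nm}, together with the tautological implication that $(n,m)$-boundedness forces $(1,m)$-boundedness, shows that a projective variety is $(n,m)$-bounded over $k$ if and only if it is $(1,m)$-bounded over $k$, and the analogous equivalence holds over $L$. Hence the desired biconditional reduces immediately to Theorem \ref{thm:1m_is_geom}. When $m=0$, $(n,0)$-boundedness coincides with $n$-boundedness; by Theorem \ref{thm:1_implies_n}, combined with the obvious implication that $n$-boundedness implies $1$-boundedness, this is equivalent to $1$-boundedness, and the same collapse occurs over $L$, so the statement reduces to Theorem \ref{thm:1b_is_geom}.

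The one item worth verifying in passing is the easy direction: if $X_L$ is $(n,m)$-bounded over $L$, then $X$ is $(n,m)$-bounded over $k$. Given a normal projective variety $Y$ over $k$ of dimension at most $n$ with base points $y_i\in Y(k)$ and targets $x_i\in X(k)$, base change to $L$ carries a $k$-morphism $f\colon (Y,\bar y)\to (X,\bar x)$ to an $L$-morphism $f_L\colon (Y_L,\bar y)\to (X_L,\bar x)$ of the same Hilbert polynomial, so the set of Hilbert polynomials attained by pointed $k$-morphisms is a subset of the corresponding set over $L$; finiteness of the latter then forces finiteness of the former, which is what is needed.

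There is no real obstacle here: the corollary is essentially a packaging argument that assembles Theorem \ref{thm:1_implies_n}, Proposition \ref{prop:1m_is_nm}, Theorem \ref{thm:1b_is_geom}, and Theorem \ref{thm:1m_is_geom}. All of the nontrivial geometricity content has already been isolated in Section \ref{S7}, where the passage from $k$ to $L$ was handled via a moduli-theoretic specialization argument on $\mathcal{M}_{g,m}$; what remains for the statement of the corollary is only the reduction of the general $(n,m)$ situation to those $(1,*)$ cases, which is precisely the content of Sections \ref{S8} and \ref{S9}.
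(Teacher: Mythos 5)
Your proposal is correct and follows essentially the same route as the paper: reduce $(n,m)$-boundedness to the $(1,m)$ (resp. $1$-bounded) case via the collapse results of Sections \ref{S8} and \ref{S9}, and then invoke the geometricity theorems \ref{thm:1b_is_geom} and \ref{thm:1m_is_geom}. If anything, your write-up is slightly more careful than the paper's, since you attach the correct reduction statement to each case ($m=0$ versus $m\geq 1$) and you also spell out the easy base-change direction, which the paper leaves implicit.
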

 \begin{proof}
 If $X$ is $(n,m)$-bounded, then $X$ is $(1,m)$-bounded. In particular, if $m=0$, then $X_L$ is $(1,m)$-bounded by Theorem \ref{thm:1b_is_geom}. Moreover, if $m\geq 1$, then $X_L$ is $(1,m)$-bounded by  Theorem \ref{thm:1m_is_geom}. Now, as $X_L$ is $(1,m)$-bounded, for $m=0$, it follows from Theorem \ref{thm:1m_is_nl} that $X_L$ is $(n,m)$-bounded. If $m\geq 1$, then  Theorem \ref{thm:1_implies_n} implies that $X_L$ is $(n,m)$-bounded. This concludes the proof.
 \end{proof}

The relation between algebraic hyperbolicity and bounded varieties is provided by the following theorem. 
\begin{theorem}\label{alg-hyp-bounded}
A projective algebraically hyperbolic scheme over $k$ is bounded over $k$.
\end{theorem}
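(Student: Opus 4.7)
The plan is to reduce this statement to Theorem \ref{thm:1_implies_n}, which already guarantees that a $1$-bounded projective scheme is bounded. Thus it suffices to show that an algebraically hyperbolic projective scheme $X$ over $k$ is $1$-bounded over $k$. This should be essentially a direct unpacking of the definitions, with the genus of the source curve playing the role of an absolute constant.

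More precisely, suppose $X$ is algebraically hyperbolic and fix the ample line bundle $\mathcal{L}$ and constants $\alpha, \beta > 0$ witnessing this, as in Definition \ref{def:alg_hyp}. Let $C$ be a smooth projective connected curve over $k$ of genus $g$. Since $g$ is a fixed non-negative integer (depending only on $C$), the quantity $\alpha_C := \alpha \cdot (2g - 2) + \beta$ is a fixed real number. By hypothesis, every morphism $f \colon C \to X$ satisfies $\deg_C f^{\ast}\mathcal{L} \leq \alpha_C$. Because the Hilbert polynomial of $f$ with respect to $\mathcal{L}$ is determined by $g$ and $\deg_C f^{\ast}\mathcal{L}$, only finitely many Hilbert polynomials arise among such morphisms. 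Since $\underline{\Hom}_k(C, X)$ is the disjoint union over Hilbert polynomials $P$ of the quasi-projective schemes $\underline{\Hom}_k^{P}(C, X)$ \cite[Section~4.c]{Groth-FGA}, and only finitely many such $P$ can appear, $\underline{\Hom}_k(C, X)$ is a finite union of quasi-projective schemes, hence of finite type over $k$. This is precisely the condition that $X$ be $1$-bounded over $k$.

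With $1$-boundedness of $X$ in hand, Theorem \ref{thm:1_implies_n} gives immediately that $X$ is bounded over $k$. There is no substantive obstacle here: the only point worth double-checking is that the bound on the degree really does force the Hilbert polynomial to take only finitely many values, which follows because over a fixed curve $C$ the Hilbert polynomial depends only on $g(C)$ and the degree. Everything else is a direct appeal to results already proved earlier in the paper.
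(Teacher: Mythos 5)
Your proof is correct and follows the same route as the paper: deduce $1$-boundedness directly from the definition of algebraic hyperbolicity (the degree bound becomes a constant once the curve, hence its genus, is fixed, so only finitely many Hilbert polynomials occur), and then invoke Theorem \ref{thm:1_implies_n}. The paper's own proof is just a more terse version of exactly this argument.
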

\begin{proof}  (This follows from 
\cite[Theorem~1.7]{KovacsLieblichErratum}. We give a self-contained proof using the results of this paper.) Let $X$ be a projective algebraically hyperbolic variety over $k$. Then, for every projective normal (hence smooth) curve $C$ over $k$, the degree of any morphism $C\to X$ is bounded linearly in the genus of $C$. In particular, the scheme $\underline{\Hom}_k(C,X)$ is of finite type over $k$.  This implies (clearly) that $X$ is $1$-bounded over $k$. Therefore, $X$ is bounded over $k$  (Theorem \ref{thm:1_implies_n}).
\end{proof}

 It seems reasonable to suspect that $(n,m)$-bounded varieties are in fact bounded. Indeed, as we explain in Section \ref{section:conjectures}, 
the Green--Griffiths--Lang conjecture implies that a $(1,m)$-bounded projective variety is $1$-bounded, and hence bounded (Theorem \ref{thm:1_implies_n}). 
 In the direction of this ``reasonable'' expectation, we prove the following result.  
 
 \begin{proposition} Let   $m \geq 1$ be an integer, and let $X$ be a  $(1,m)$-bounded  projective scheme over $k$. Let $Y$ be a projective variety over $k$.    Then, almost all (non-empty) connected components of  the scheme $\underline{\Hom}_k(Y,X)$  have dimension $< \dim X$. 
\end{proposition}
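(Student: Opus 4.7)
The plan is to argue by contradiction: suppose that $H := \underline{\Hom}_k(Y,X)$ has infinitely many pairwise distinct connected components $Z_1, Z_2, \ldots$ with $\dim Z_i \geq \dim X$, and derive a contradiction with the $(n,1)$-boundedness of $X$ for every $n \geq 1$, which holds by Proposition \ref{prop:1m_is_11} combined with Corollary \ref{cor:1m_is_n1}. After passing to a resolution of singularities, we may and do reduce to the case that $Y$ is smooth and projective: precomposition with a resolution $\pi \colon \widetilde{Y} \to Y$ yields a monomorphism of schemes $\pi^\ast \colon \underline{\Hom}_k(Y,X) \to \underline{\Hom}_k(\widetilde{Y},X)$ which preserves dimensions of connected components, and inside any fixed connected component $\widetilde{Z}$ of $\underline{\Hom}_k(\widetilde{Y},X)$ of dimension $\dim X$ only finitely many disjoint subsets $\pi^\ast(Z_i)$ of dimension $\dim X$ can lie, since each such subset must contain a top-dimensional irreducible component of $\widetilde{Z}$.

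Now assuming $Y$ is smooth and projective: since $X$ is $(1,m)$-bounded, Proposition \ref{prop:nm_bounded_is_groupless} shows that $X$ is pure over $k$. Hence by Proposition \ref{prop:projectivity_of_Hom}, every connected component $Z$ of $H$ is projective. Fix any $y \in Y(k)$. By Lemma \ref{lem:applying_rigidity}, the evaluation morphism $\mathrm{ev}_y \colon Z \to X$, $f \mapsto f(y)$, is finite, and therefore $\dim Z \leq \dim X$. Assume for contradiction that infinitely many pairwise distinct connected components $Z_1, Z_2, \ldots$ of $H$ satisfy $\dim Z_i = \dim X$. Then each image $\mathrm{ev}_y(Z_i) \subset X$ is closed of dimension $\dim X$, and thus contains an irreducible component of $X$ of top dimension. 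Since $X$ has only finitely many irreducible components, by the pigeonhole principle we may pass to a subsequence and assume there is a fixed top-dimensional irreducible component $X' \subset X$ such that $X' \subset \mathrm{ev}_y(Z_i)$ for every $i$.

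Choose any $x \in X'(k)$. For every $i$, there exists $f_i \in Z_i(k)$ with $f_i(y) = x$; since the $Z_i$'s are pairwise disjoint, the $f_i$'s are pairwise distinct elements of $\Hom_k((Y,y),(X,x))$, which is therefore infinite. On the other hand, $X$ is $(\dim Y,1)$-bounded by Corollary \ref{cor:1m_is_n1}, so Lemma \ref{lem:nm_bounded_equivalence} forces $\Hom_k((Y,y),(X,x))$ to be finite. This is the desired contradiction. The main obstacle I anticipate is making the reduction to smooth $Y$ rigorous; the crucial point being that a monomorphism of finite-type $k$-schemes preserves dimensions of components, so the finiteness count transfers cleanly from $\underline{\Hom}_k(\widetilde{Y},X)$ back to $\underline{\Hom}_k(Y,X)$ without losing or merging too many components.
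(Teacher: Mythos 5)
Your proof is correct and follows essentially the same route as the paper's: both argue by contradiction, using that $(1,m)$-boundedness gives $(n,1)$-boundedness for all $n$ (Corollary \ref{cor:1m_is_n1}), that purity makes the finite-type components of the Hom-scheme proper so the evaluation maps $\mathrm{ev}_y$ are finite, and that infinitely many components of dimension $\geq\dim X$ would produce infinitely many maps through a single pointed fibre $(y,x)$, contradicting Lemma \ref{lem:nm_bounded_equivalence}. Your two extra steps --- the reduction to smooth $Y$ via a resolution and the pigeonhole over the top-dimensional irreducible components of $X$ (rather than asserting outright that $\mathrm{ev}_y\colon H_i\to X$ is surjective) --- are refinements of details the paper's proof passes over silently, not a different argument.
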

\begin{proof} Note that $X$ is $(n,1)$-bounded for all $n\geq 1$ (Corollary \ref{cor:1m_is_n1}). 
Let $y \in Y(k)$. Consider the evaluation map $\mathrm{eval}_y: \underline{\Hom}_k(Y,X) \to X$ defined by $f\mapsto f(y)$. Suppose that $\underline{\Hom}_k(Y,X)$ has infinitely many pairwise distinct connected components $H_1, \ldots$ of dimension  at least $ \dim X$. Then, as the restriction $\mathrm{eval}_y:H_i\to X$ of $\mathrm{eval}_y$ to $H_i$ is finite, it is surjective. Let $x$ be any point in $X$. Then, for every $i$,  the fibre of $H_i\to X$ over $x$ is non-empty. Thus, for every $i$, the set $\Hom((Y,y),(X,x))$ contains a point from $H_i$, and is therefore infinite. This contradicts the fact that $X$ is $(\dim Y,1)$-bounded. We conclude that almost all components of $\underline{\Hom}_k(Y,X)$ have dimension $<\dim X$.  
\end{proof}

\begin{remark}  \label{remark:impls}
Let $n\geq 1$ and $m\geq 1$ be positive integers. Let $X$ be a projective scheme over $k$. We have shown the following statements (see also \cite[Chapter~12]{JBook}).
\begin{itemize}
\item If $X$ is algebraically hyperbolic over $k$, then  $X$ is  bounded over $k$.
\item The scheme $X$ is bounded over $k$ if and only if  $X$ is $1$-bounded over $k$.
\item The scheme $X$ is $(n,m)$-bounded over $k$ if and only if $X$ is $(1,1)$-bounded over $k$. 
\item If $X$ is $1$-bounded over $k$, then $X$ is $(n,m)$-bounded over $k$.
\item If $X$ is $(n,m)$-bounded over $k$, then $X$ is groupless over $k$.
\end{itemize}
In a diagram, taking into account Demailly's theorem and Brody's Lemma when $k=\CC$, our results can be summarized as follows (with $n\geq 1$ and $m\geq 1$ below):
\begin{center}
\begin{tabular}{m{2.25cm}cm{1.7cm}cm{2.35cm}cm{3.0cm}c}
Kobayashi hyperbolic &$\Longleftrightarrow$ &Brody \mbox{hyperbolic} &\\[0.2cm]
\qquad \rotatebox[origin=c]{270}{$\Longrightarrow$}\\[0.2cm]
algebraically hyperbolic &$\Longrightarrow$ &bounded &$\Longleftrightarrow$ &1-bounded &$\Longrightarrow$ & $(1,1)$-bounded   &\\[0.2cm]
&&&&&&\qquad \rotatebox[origin=c]{270}{$\Longleftrightarrow$}\\[-0.1cm]
&&    &   &   groupless & $\Longleftarrow$ &  $(n,m)$-bounded
\end{tabular}
\end{center}\mbox{}\\[0.0cm]
 The conjectures of Demailly, Green-Griffiths, and Lang predict that all of these implications are actually equivalences.
\end{remark}

    \section{Proofs of main results} \label{S10}
    In this section we prove the results on algebraic hyperbolicity and $1$-bounded varieties.
    \subsection{Algebraically hyperbolic varieties}
We  prove all the results on algebraic hyperbolicity stated in Section \ref{section:main_results}. The proofs are     applications and combinations of all our results.
  
  \begin{proof}[Proof of Theorem \ref{thm:geometricity}]
  This is Theorem \ref{thm1}. 
  \end{proof}

 \begin{proof}[Proof of Theorem \ref{thm:aut_end}]
  Since algebraically hyperbolic projective varieties are bounded (Theorem \ref{alg-hyp-bounded}), there are only finitely many dominant rational maps   $Y\dashrightarrow X$ by Proposition \ref{dom-fin-norm}. The rest of the theorem follows from Corollary \ref{Aut-fin-bound}.  
  \end{proof}
  
     \begin{proof}[Proof of Theorem \ref{thm:pointed_hom}]
  Since algebraically hyperbolic projective varieties are bounded (Theorem \ref{alg-hyp-bounded}), the theorem follows from Corollary \ref{Hom-subsch-finite}.   
  \end{proof}
  
   \begin{proof}[Proof of Theorem \ref{thm:projectivity_of_hom}]
  Let $X$ be a projective algebraically hyperbolic scheme over $k$ and let $Y$ be a projective normal scheme over $k$. Since $X$ is bounded (Theorem \ref{alg-hyp-bounded}), the scheme $\underline{\Hom}_k(Y,X)$ is  an algebraically hyperbolic projective scheme over $k$ with $\dim \underline{\Hom}_k(Y,X) \leq \dim X$ (Corollary \ref{cor:projectivity_of_Homs}). 
  
    To see that $\dim \underline{\Hom}_k^{nc}(Y,X) < \dim X$, let $Z\subset \underline{\Hom}_k^{nc}(Y,X)$ be a reduced irreducible component with $\dim Z = \dim X$. For all $y$ in $Y(k)$, consider the evaluation map $\mathrm{eval}_y\colon Z\to X$, and note that it is finite (as shown in the proof of Corollary \ref{cor:projectivity_of_Homs}). Since $\dim Z = \dim X$, for all $y$ in $Y(k)$, the finite morphism $\mathrm{eval}_y$ is surjective. Thus, as $\mathrm{Sur}_k(Z,X)$ is finite (Theorem \ref{thm:aut_end}), there exist an integer $n\geq 1$ and  points $y_1,\ldots, y_n\in Y(k)$ such that, for all $y$ in $Y(k)$, we have that $\mathrm{eval}_y\in \{ \mathrm{eval}_{y_1},\ldots,\mathrm{eval}_{y_n}\}$.  In other words, every morphism $f\colon Y\to X$ in $Z$ takes on only finitely many values (namely $f(y_1),\ldots, f(y_n)$). In particular, since $Z$ is irreducible, we conclude that every $f$ in $Z$ takes on precisely one value, i.e., $f$ is constant. This contradicts the fact that $Z\subset \underline{\Hom}^{nc}(Y,X)$.   
  \end{proof}

  \subsection{Bounded varieties}
  We prove all the results on $1$-bounded varieties stated in Section \ref{section:further_results}.
  
  \begin{proof}[Proof of Theorem \ref{thm:geometricity2}]
  This is Theorem \ref{thm:1b_is_geom}.
  \end{proof}
  
    \begin{proof}[Proof of Theorem \ref{thm:pointed_hom2}]
   Since $1$-bounded projective    projective varieties are bounded (Theorem \ref{thm:1_implies_n}), the theorem follows from Corollary \ref{Hom-subsch-finite}. 
  \end{proof}
  
    \begin{proof}[Proof of Theorem  \ref{thm:projectivity_of_hom2}] (We follow very closely the proof  of Theorem \ref{thm:projectivity_of_hom}.)
  Let $X$ be a projective $1$-bounded scheme over $k$ and let $Y$ be a projective normal scheme over $k$. Since $X$ is bounded (Theorem \ref{thm:1_implies_n}), the scheme $\underline{\Hom}_k(Y,X)$ is  a bounded projective scheme over $k$ with $\dim \underline{\Hom}_k(Y,X) \leq \dim X$ (Corollary \ref{cor:projectivity_of_Homs}). 
  
    To see that $\dim \underline{\Hom}_k^{nc}(Y,X) < \dim X$, let $Z\subset \underline{\Hom}_k^{nc}(Y,X)$ be a reduced irreducible component with $\dim Z = \dim X$. We now redo the argument in the proof of Theorem \ref{thm:projectivity_of_hom2}. For all $y$ in $Y(k)$, consider the evaluation map $\mathrm{eval}_y\colon Z\to X$, and note that it is finite (as shown in the proof of Corollary \ref{cor:projectivity_of_Homs}). Since $\dim Z = \dim X$, for all $y$ in $Y(k)$, the finite morphism $\mathrm{eval}_y$ is surjective. Thus, as $\mathrm{Sur}_k(Z,X)$ is finite (Theorem \ref{thm:pointed_hom2}), there exist an integer $n\geq 1$ and  points $y_1,\ldots, y_n\in Y(k)$ such that, for all $y$ in $Y(k)$, we have that $\mathrm{eval}_y\in \{ \mathrm{eval}_{y_1},\ldots,\mathrm{eval}_{y_n}\}$.  In other words, every morphism $f\colon Y\to X$ in $Z$ takes on only finitely many values (namely $f(y_1),\ldots, f(y_n)$). In particular, since $Z$ is irreducible, we conclude that every $f$ in $Z$ takes on precisely one value, i.e., $f$ is constant. This contradicts the fact that $Z\subset \underline{\Hom}^{nc}(Y,X)$.  
  \end{proof}
 
\begin{proof}[Proof of Theorem \ref{thm:boundedness_to_uniform}]
 Let $g\geq 2$ be an integer and let $\mathcal{U}\to \mathcal{M}$ be the universal smooth proper curve of genus $g$ over the moduli stack  $\mathcal{M}:=\mathcal{M}_{g,k}$ of smooth proper genus $g$ curves over $k$. Let $\overline{\mathcal{U}}\to \overline{\mathcal{M}}$ be the universal stable curve over the moduli stack $\overline{\mathcal{M}}$ of stable curves of genus $g$.  Let $Z$ be a normal projective bounded scheme over $k$ and let $Z\to \overline{\mathcal{M}}$ be a flat   surjective morphism. Let $Y:= \overline{\mathcal{U}} \times_{\overline{M}} Z$ and consider the morphism $Y\to Z$. 
 
 Now, since $X$ is $1$-bounded over $k$, it follows from Theorem \ref{thm:1_implies_n}   that $X$ is bounded over $k$. Therefore, the scheme $\underline{\Hom}_k(Y, X\times Z)$ is of finite type over $k$. In particular, the morphism $\underline{\Hom}_{Z}(Y,X\times Z)\to Z$ is of finite type. By descent, the morphism $\underline{\Hom}_{\overline{\mathcal{M}}}(\overline{\mathcal{U}},X\times \overline{\mathcal{M}})\to \overline{\mathcal{M}}$ is of finite type. By base-change, the morphism $$\underline{\Hom}_{\mathcal{M}}(\mathcal{U}, X\times \mathcal{M}) \to \mathcal{M}$$ is of finite type. As  the latter morphism  is of finite type (for every $g\geq 2$) we see that, for every ample line bundle $\mathcal{L}$ on $X$ and every integer $g\geq 2$, there is an integer $\alpha(X, \mathcal{L}, g)$ such that, for every smooth projective connected curve $C$ of genus $g$ over $k$ and every morphism $f:C\to X$, the inequality 
 \[
 \deg_C f^\ast \mathcal{L} \leq \alpha(X,\mathcal{L},g)
 \] holds. This implies the desired statement and concludes the proof.
\end{proof}

 Note that Theorem \ref{thm:boundedness_to_uniform} says that a projective variety over $k$ is $1$-bounded over $k$ if and only if it is \emph{weakly bounded} in the sense of Kov\'acs-Lieblich \cite{KovacsLieblichErratum}.

\section{Conjectures related to Demailly's  and Green--Griffiths--Lang's conjecture} \label{section:conjectures}

The following conjecture is a consequence of Demailly's conjecture (Conjecture \ref{conj:demailly}), and thus also Green--Griffiths--Lang's conjecture \cite{Lang}.   The conjecture says that  the total space of a family of projective algebraically hyperbolic varieties over a projective algebraically hyperbolic base variety is algebraically hyperbolic.
 
\begin{conjecture}[Fibration property]\label{conj:families} Let $k$ be  an algebraically closed field of characteristic zero. 
Let $f:X\to Y$ be a   morphism of projective varieties  over $k$. If $Y$ is algebraically hyperbolic over $k$, and,  for all $y$ in $Y(k)$, the projective scheme $X_y$ is algebraically hyperbolic over $k$, then  $X$ is algebraically hyperbolic over $k$. 
\end{conjecture}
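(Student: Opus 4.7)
The plan is to fix ample line bundles $\mathcal{A}$ on $X$ and $\mathcal{L}_Y$ on $Y$, and, for $N \gg 0$, take the ample line bundle $\mathcal{L} := \mathcal{A} \otimes f^\ast \mathcal{L}_Y^{\otimes N}$ on $X$. For any morphism $g \colon C \to X$ from a smooth projective connected curve,
\[
\deg_C g^\ast \mathcal{L} \;=\; \deg_C g^\ast \mathcal{A} \;+\; N \deg_C (f \circ g)^\ast \mathcal{L}_Y,
\]
and the algebraic hyperbolicity of $Y$ bounds the second summand linearly in $\chi(C)$. It therefore suffices to produce a linear-in-$\chi(C)$ bound on the ``vertical'' term $\deg_C g^\ast \mathcal{A}$ that is independent of $g$.

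I would then distinguish two cases. If $f \circ g$ is constant with image $y \in Y(k)$, then $g$ factors through the fiber $X_y$, and algebraic hyperbolicity of $X_y$ with respect to $\mathcal{A}|_{X_y}$ furnishes constants $(\alpha_y,\beta_y)$ bounding $\deg_C g^\ast \mathcal{A}$ linearly in $\chi(C)$. If $f \circ g$ is non-constant, one can try to pass to a finite cover of $C$ and reduce to the relative situation over the image curve, then appeal to the fact that the geometric generic fiber $X_{\overline{K(Y)}}$ should be algebraically hyperbolic (the fibers over all closed points are, and one would hope to spread this out via a constructibility argument combined with Theorem \ref{thm:geometricity}).

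The principal obstacle is the \textbf{uniformity of the fiberwise constants}. For the above decomposition to close up into a single algebraic-hyperbolicity inequality for $X$, one needs a single pair $(\alpha,\beta)$ that works simultaneously for \emph{every} fiber $X_y$, whereas fiberwise algebraic hyperbolicity only provides pairs $(\alpha_y,\beta_y)$ that a priori depend on $y$. No general mechanism is currently known that spreads algebraic hyperbolicity of every closed fiber of a family to a uniform bound; this is precisely why the statement is a conjecture and not a theorem, and morally one is asking for a relative form of Demailly's inequality.

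A natural intermediate step is to first establish the analogous fibration property for $1$-boundedness, where the situation is more tractable. Since a $1$-bounded projective scheme is bounded (Theorem \ref{thm:1_implies_n}), one could apply the properness statement of Lemma \ref{lem:properness_of_hom} to the universal curve over the moduli stack of smooth projective curves of each fixed genus $g$, pulled back along $Y$, and then descend, to obtain, for each $g$, a uniform bound on $\deg_C g^\ast \mathcal{A}$ as $g$ ranges over curves of genus $g$ in every fiber $X_y$---an analogue in families of Theorem \ref{thm:boundedness_to_uniform}. Converting such a uniform-in-$y$-but-genus-dependent bound into the linear-in-genus bound required by Definition \ref{def:alg_hyp} is the final barrier, and it is exactly where the full strength of Demailly's conjecture (Conjecture \ref{conj:demailly}) enters.
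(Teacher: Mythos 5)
The statement you are asked about is a \emph{conjecture}: the paper does not prove it unconditionally, and neither do you, so the right comparison is between your analysis and the paper's conditional derivation (the remark following Conjecture \ref{conj:families}, which deduces the fibration property from Demailly's conjecture). Your diagnosis of the direct approach is accurate and matches the heart of the difficulty: splitting $\deg_C g^\ast\mathcal{L}$ into a horizontal part controlled by the hyperbolicity of $Y$ and a vertical part $\deg_C g^\ast\mathcal{A}$, one is stuck because the fiberwise constants $(\alpha_y,\beta_y)$ have no known uniformity in $y$, and it is correct that this is why the statement remains conjectural. Your intermediate suggestion of first proving the fibration property for $1$-boundedness via Lemma \ref{lem:properness_of_hom} and Theorem \ref{thm:boundedness_to_uniform} is a sensible and genuinely different idea not pursued in the paper.

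Where your proposal diverges from the paper is in how Demailly's conjecture is supposed to rescue the argument. You leave this as a vague final step (``it is exactly where the full strength of Demailly's conjecture enters''), implicitly suggesting it would supply the missing uniformity in the degree estimates. The paper's conditional argument is structured quite differently and avoids the uniformity problem altogether: one reduces to $k\subset\CC$ using the geometricity of algebraic hyperbolicity (Theorem \ref{thm:geometricity}), applies Demailly's conjecture to convert the algebraic hyperbolicity of $Y_\CC$ and of every fiber into Kobayashi hyperbolicity, invokes the known complex-analytic fibration theorem for Kobayashi hyperbolicity (\cite[Corollary~3.11.2]{Kobayashi}) to conclude that $X_\CC$ is Kobayashi hyperbolic, and then returns to algebraic hyperbolicity via Demailly's Theorem \ref{thm:dem1}. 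In other words, the conditional proof never attempts to uniformize the constants $(\alpha_y,\beta_y)$; it transports the whole problem to the analytic category, where the fibration property is already a theorem. If you want your write-up to reflect the paper's reasoning, you should replace your closing sentence with this explicit chain of reductions.
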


The analogue of this conjecture for projective families of Kobayashi hyperbolic   varieties is known and follows from \cite[Corollary~3.11.2]{Kobayashi}. We now explain how Conjecture \ref{conj:families} follows from Demailly's conjecture (Conjecture \ref{conj:demailly}).

\begin{remark}[Demailly's conjecture implies Conjecture \ref{conj:families}]
To see that Conjecture \ref{conj:families} is a consequence of Demailly's conjecture (Conjecture \ref{conj:demailly}),  we may and do assume that $k\subset \CC$. Then,  with the notation as in Conjecture \ref{conj:families}, by the geometricity of algebraic hyperbolicity (Theorem \ref{thm:geometricity}), the fibers of the morphism $X_\CC\to Y_\CC$ are algebraically hyperbolic over $\CC$ and $Y_\CC$ is algebraically hyperbolic over $\CC$. By Demailly's conjecture, for every $t$ in $Y(\CC)$, the fiber $X_{y}$ is Kobayashi hyperbolic (as a complex analytic space) and the projective variety $Y_\CC$ is Kobayashi hyperbolic. Therefore, $X_\CC$ is  Kobayashi hyperbolic \cite[Corollary~3.11.2]{Kobayashi}.  Since  Kobayashi hyperbolic projective varieties over $\CC$ are algebraically hyperbolic (Theorem \ref{thm:dem1}), this shows that $X_\CC$ is algebraically hyperbolic over $\CC$. We conclude that $X$ is algebraically hyperbolic over $k$.  
\end{remark}

The following conjecture relates all notions of ``boundedness'' introduced in this paper (see Section \ref{section:boundedness}). 
\begin{conjecture}\label{conj:boundedness}
Let $k$ be an algebraically closed field of characteristic zero and let $X$ be a projective variety over $k$. Then the following are equivalent.
\begin{enumerate}
\item The projective variety $X$ is algebraically hyperbolic over $k$.
\item The projective variety $X$ is bounded over $k$.
\item For all $n\geq 1$, the projective variety $X$ is $(n,1)$-bounded.
\end{enumerate}
\end{conjecture}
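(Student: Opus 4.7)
The plan is to establish the two non-trivial implications $(3) \Rightarrow (2)$ and $(2) \Rightarrow (1)$, since $(1) \Rightarrow (2)$ is Theorem \ref{alg-hyp-bounded} and $(2) \Rightarrow (3)$ is immediate from the chain of implications in Remark \ref{remark:impls}.

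For $(3) \Rightarrow (2)$, by Theorem \ref{thm:1_implies_n} it is enough to upgrade $(1,1)$-boundedness (which is equivalent to $(n,1)$-boundedness for all $n$ by Theorem \ref{thm:1m_is_nl}) to $1$-boundedness. Fix a smooth projective curve $C$. First I would observe that $(1,1)$-boundedness forces $X$ to be groupless and pure (Proposition \ref{prop:nm_bounded_is_groupless}), so that by Corollary \ref{cor:crit_for_qf} every irreducible component $Z$ of $\underline{\Hom}_k(C,X)$ is projective and, for any $c\in C(k)$, the evaluation morphism $\mathrm{eval}_c\colon Z \to X$ is finite. By $(1,1)$-boundedness (and Lemma \ref{lem:nm_bounded_equivalence}), each fibre of $\mathrm{eval}_c$ is a finite set, hence each component $Z$ is zero-dimensional and consists of a single morphism. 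The residual content of $1$-boundedness is therefore the finiteness of the set of components of $\underline{\Hom}_k(C,X)$; what must be ruled out is an infinite sequence of components of arbitrarily large degree. I would attempt this by a degeneration argument in $\overline{\mathcal{M}}_g$: specializing such a hypothetical sequence along a one-parameter family degenerating $C$ to a nodal curve (using the properness of Lemma \ref{lem:properness_of_hom} applied over $\overline{\mathcal{M}}_g$) one would extract, on the normalization, maps from curves of lower genus and then induct on $g$.

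For $(2) \Rightarrow (1)$, Theorem \ref{thm:boundedness_to_uniform} already supplies, for every ample $\mathcal{L}$ and every $g\geq 0$, an integer $\alpha(X,\mathcal{L},g)$ such that $\deg_C f^\ast\mathcal{L} \leq \alpha(X,\mathcal{L},g)$ for every smooth projective connected curve $C$ of genus $g$ and every morphism $f\colon C\to X$. Algebraic hyperbolicity amounts to the statement that $\alpha(X,\mathcal{L},g)$ grows at most \emph{linearly} in $g$. A natural approach is to exploit \'etale covers: every curve $C$ of genus $g\geq 2$ admits \'etale covers $C'\to C$ of controlled degree $d$ with $\mathrm{genus}(C') = d(g-1)+1$, and the degree of $f\colon C\to X$ equals the degree of the composition $C'\to C\to X$ divided by $d$; combined with specialization in $\overline{\mathcal{M}}_g$ along stable curves whose dual graph has many nodes, one hopes to bootstrap the genus-by-genus bound into a bound that is linear in $g$.

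The main obstacle is precisely this bootstrapping step. Our current moduli-theoretic toolkit (bend-and-break, rigidity, specialization over $\overline{\mathcal{M}}_g$) can manufacture a per-genus bound from $1$-boundedness, but it provides no a priori control on the growth rate of $\alpha(X,\mathcal{L},g)$ as a function of $g$; there is no purely algebraic reason this growth should be linear rather than, say, polynomial or exponential. A complete proof of $(2) \Rightarrow (1)$ therefore appears to require either the Green--Griffiths--Lang conjecture itself (enabling, over $\CC$, a passage to Kobayashi hyperbolicity via Demailly's implication, followed by the converse Theorem \ref{thm:dem1}) or a genuinely new technique for comparing moduli of maps of curves across different genera. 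Exactly the same step is the obstacle to closing the remaining gap in Demailly's conjecture (Conjecture \ref{conj:demailly}), which is consistent with the fact that Conjecture \ref{conj:boundedness} is a consequence of Green--Griffiths--Lang.
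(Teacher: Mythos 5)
The statement you are asked to prove is labelled a \emph{conjecture} in the paper, and the paper does not prove it: it only records that $(1)\Rightarrow(2)$ is Theorem \ref{alg-hyp-bounded}, that $(2)\Rightarrow(3)$ follows from Remark \ref{bounded-impications}, and then states explicitly that the implications $(3)\Rightarrow(2)$ and $(2)\Rightarrow(1)$ are currently not known (adding only that $(3)\Rightarrow(1)$ would follow from the Green--Griffiths--Lang conjecture via Proposition \ref{prop:nm_bounded_is_groupless}). Your proposal correctly identifies the two easy implications and, to your credit, is honest that the other two are not closed; so there is no disagreement with the paper about what is actually established. But as a proof of the stated equivalences it has genuine gaps, and you should not present the two sketches as if they were within reach of the paper's toolkit.

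Concretely: in your $(3)\Rightarrow(2)$ sketch, the reduction to ``finiteness of the set of components of $\underline{\Hom}_k(C,X)$'' is correct, but the proposed degeneration argument does not get off the ground. Lemma \ref{lem:properness_of_hom} requires $U\to M$ to be a \emph{smooth} proper morphism, so it does not apply to the universal stable curve over $\overline{\mathcal{M}}_g$; the paper's own use of stable curves (in Theorem \ref{thm:boundedness_to_uniform}) presupposes that $X$ is already known to be bounded, which is exactly what you are trying to prove, so invoking that machinery here is circular. Moreover, $(1,1)$-boundedness controls \emph{pointed} Hom-sets, not the number of components of $\underline{\Hom}_k(C,X)$, and nothing in the specialization to a nodal curve converts the former into the latter: the hypothetical infinite family of maps of growing degree need not admit limits over the boundary at all without a properness statement you do not have. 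For $(2)\Rightarrow(1)$, you correctly isolate the obstruction as the growth rate of $\alpha(X,\mathcal{L},g)$ in $g$, but the \'etale-cover manipulation only relates $\alpha(X,\mathcal{L},g)$ to $\alpha(X,\mathcal{L},d(g-1)+1)/d$ and gives no a priori linearity; as you yourself concede, closing this requires input beyond the paper (e.g.\ Green--Griffiths--Lang). In short: your account of what is known matches the paper, but the statement remains a conjecture, and neither of your two sketches constitutes a proof of the missing implications.
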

 
 Note that $(1)\implies (2)$ is Theorem \ref{alg-hyp-bounded} and that $(2)\implies (3)$ is Remark \ref{bounded-impications}. Other relations between the three notions in Conjecture \ref{conj:boundedness} are summarized in Remark \ref{remark:impls}. The implication $(3) \implies (2)$ is currently not known and neither is the implication $(2)\implies (1)$. To show that  $(3)\implies (2)$, it suffices to show that, if $X$ is $(n,1)$-bounded for all $n\geq 1$, then $X$ is $1$-bounded.

 We conclude by noting that  the implication $(3)\implies (1)$ in Conjecture \ref{conj:boundedness} is a consequence of Green--Griffiths--Lang's conjecture in \cite{Lang}. Indeed, $(1,m)$-bounded varieties are groupless by Proposition \ref{prop:nm_bounded_is_groupless}, and it follows from Green--Griffiths--Lang's conjectures that projective groupless varieties are algebraically hyperbolic.
 
\bibliography{refs}{}
\bibliographystyle{plain}

\end{document}